\definecolor{Red}{cmyk}{0,1,1,0}
\numberwithin{equation}{section}
  \newtheorem*{theorem*}        {Theorem}
	\newtheorem*{conjecture*}   {Conjecture}
  \newtheorem*{lemma*}          {Lemma}
    \newtheorem*{claim*}          {Claim}
    \newtheorem{teo}{Theorem}
  \newtheorem{coro}[teo]{Corollary}
  \newtheorem{prop}[teo]{Proposition}
  \newtheorem{defi}[teo]{Definition}
  \newtheorem{note}[teo]{Remark}
  \newtheorem{lema}[teo]{Lemma}
  \newtheorem{exem}[teo]{Example}
\begin{document}
%\pagestyle{headings}

%%%%%%%%%%%%%%%%%       FORMATO

%\magnification=\magstep1\hoffset=0.cm
\voffset=-1.5truecm\hsize=16.5truecm    \vsize=24.truecm
\baselineskip=14pt plus0.1pt minus0.1pt \parindent=12pt
\lineskip=4pt\lineskiplimit=0.1pt      \parskip=0.1pt plus1pt

\def\ds{\displaystyle}\def\st{\scriptstyle}\def\sst{\scriptscriptstyle}

\global\newcount\numsec\global\newcount\numfor
\gdef\profonditastruttura{\dp\strutbox}
\def\senondefinito#1{\expandafter\ifx\csname#1\endcsname\relax}
\def\SIA #1,#2,#3 {\senondefinito{#1#2}
\expandafter\xdef\csname #1#2\endcsname{#3} \else
\write16{???? il simbolo #2 e' gia' stato definito !!!!} \fi}
\def\etichetta(#1){(\veroparagrafo.\veraformula)
\SIA e,#1,(\veroparagrafo.\veraformula)
 \global\advance\numfor by 1
% \write15{@def@equ(#1){\equ(#1)} \%:: ha simbolo= #1 }
 \write16{ EQ \equ(#1) ha simbolo #1 }}
\def\etichettaa(#1){(A\veroparagrafo.\veraformula)
 \SIA e,#1,(A\veroparagrafo.\veraformula)
 \global\advance\numfor by 1\write16{ EQ \equ(#1) ha simbolo #1 }}
\def\BOZZA{\def\alato(##1){
 {\vtop to \profonditastruttura{\baselineskip
 \profonditastruttura\vss
 \rlap{\kern-\hsize\kern-1.2truecm{$\scriptstyle##1$}}}}}}
\def\alato(#1){}
\def\veroparagrafo{\number\numsec}\def\veraformula{\number\numfor}
\def\Eq(#1){\eqno{\etichetta(#1)\alato(#1)}}
\def\eq(#1){\etichetta(#1)\alato(#1)}
\def\Eqa(#1){\eqno{\etichettaa(#1)\alato(#1)}}
\def\eqa(#1){\etichettaa(#1)\alato(#1)}
\def\equ(#1){\senondefinito{e#1}$\clubsuit$#1\else\csname e#1\endcsname\fi}
\let\EQ=\Eq

%%%%%%%%%%%%%%% Definitions

\def\\{\noindent}
\def\v{\vskip.1cm}
\def\vv{\vskip.2cm}

%\def\tende#1{\vtop{\ialign{##\crcr\rightarrowfill\crcr
%              \noalign{\kern-1pt\nointerlineskip}
%              \hskip3.pt${\scriptstyle #1}$\hskip3.pt\crcr}}}
%\def\otto{{\kern-1.truept\leftarrow\kern-5.truept\to\kern-1.truept}}
%\def\arm{{}}
%\font\bigfnt=cmbx10 scaled\magstep1

%
  \def\P{\mathop{\textrm{\rm P}}\nolimits}                  %P
  \def\d{\mathop{\textrm{\rm d}}\nolimits}                  %d
  \def\exp{\mathop{\textrm{\rm exp}}\nolimits}              %exp
	\def\supp{\mathop{\textrm{\rm supp}}\nolimits}            %supp
	\def\Int{\mathop{\textrm{\rm Int}}\nolimits}            %Int
	\def\Ext{\mathop{\textrm{\rm Ext}}\nolimits}            %Ext
	\def\id{\mathop{\textrm{\rm id}}\nolimits}            %identity
    \def\sf{\mathop{\textrm{\rm sf}}\nolimits}            %sf
    \def\Var{\mathop{\textrm{\rm Var}}\nolimits}            %Variação
    
    \newcommand{\bbr}{\ensuremath        {\mathbb{R}}}
\newcommand{\bbz}{\ensuremath        {\mathbb{Z}}}
\newcommand{\bbn}{\ensuremath        {\mathbb{N}}}
\newcommand{\bbc}{\ensuremath        {\mathbb{C}}}
\newcommand{\caln}{\ensuremath        {\mathcal{N}}}
\newcommand{\cc}{\ensuremath        {\circ}}
\newcommand{\calb}{\ensuremath        {\mathcal{B}}}
\newcommand{\calf}{\ensuremath        {\mathcal{F}}}
\newcommand{\calc}{\ensuremath        {\mathcal{C}}}
\newcommand{\calr}{\ensuremath        {\mathcal{R}}}
\newcommand{\call}{\ensuremath        {\mathcal{L}}}
\newcommand{\ap}{\ensuremath        {\alpha}}
\newcommand{\bt}{\ensuremath        {\beta}}
\newcommand{\tm}{\ensuremath        {\times}}
\newcommand{\D}{\ensuremath        {\Delta}}
\newcommand{\oo}{\ensuremath        {\overline}}
\newcommand{\bbe}{\ensuremath        {\mathbb{E}}}
\let\a=\alpha
\let\b=\beta
\let\e=\varepsilon
\let\f=\varphi
\let\g=\gamma
\let\h=\eta
\let\k=\kappa
\let\l=\lambda
\let\m=\mu
\let\n=\nu
\let\o=\omega
\let\p=\pi
\let\ph=\varphi
\let\r=\rho
\let\s=\sigma
\let\t=\tau
\let\th=\vartheta
\let\y=\upsilon
\let\x=\xi
\let\z=\zeta
\let\D=\Delta
\let\F=\Phi
\let\G=\Gamma
\let\L=\Lambda
\let\Th=\Theta
\let\O=\Omega
\let\P=\Pi
\let\Ps=\Psi
\let\Si=\Sigma
\let\x=\xi
\let\Y=\Upsilon

    \newcommand\bfblue[1]{\textcolor{blue}{\textbf{#1}}}
\newcommand\blue[1]{\textcolor{blue}{}}

\thispagestyle{empty}

\begin{center}
{\LARGE Infinite DLR Measures and Volume-Type Phase Transitions on Countable Markov Shifts}
\vskip.5cm
Elmer R. Beltr\'{a}n$^{1,3}$, Rodrigo Bissacot$^{1}$, Eric O. Endo$^{2}$
\vskip.3cm
\begin{footnotesize}
$^{1}$Institute of Mathematics and Statistics (IME-USP), University of S\~{a}o Paulo, Brazil\\
$^{2}$NYU-ECNU Institute of Mathematical Sciences at NYU Shanghai, 3663 Zhongshan Road North, Shanghai, 200062, China\\
$^{3}$Universidad Cat\'olica del Norte, Departamento de Matem\'aticas, Avenida Angamos 0610, Antofagasta - Chile
\end{footnotesize}
\vskip.1cm
\begin{scriptsize}
emails: rusbert.unt@gmail.com; rodrigo.bissacot@gmail.com; ericossamiendo@gmail.com
\end{scriptsize}

\end{center}

%---------- Definir equacao
\def\be{\begin{equation}}
\def\ee{\end{equation}}

\vskip1.0cm
%\begin{abstract}
\begin{quote}
{\small

\textbf{Abstract.} \begin{footnotesize} We consider the natural definition of DLR measure in the setting of $\sigma$-finite measures on countable Markov shifts. We prove that the set of DLR measures contains the set of conformal measures associated with Walters potentials. In the BIP case, or when the potential normalizes the Ruelle's operator, we prove that the notions of DLR and conformal coincide. On the standard renewal shift, we study the problem of describing the cases when the set of the eigenmeasures jumps from finite to infinite measures when we consider high and low temperatures, respectively. For this particular shift, we prove that there always exist finite DLR measures, and we have an expression to the critical temperature for this volume-type phase transition, which occurs only for potentials with the infinite first variation.
 \end{footnotesize}}
\end{quote}

{\footnotesize{\bf Keywords:} Conformal measure, Countable Markov Shift, Infinite DLR measure, volume-type phase transition}

{\footnotesize {\bf Mathematics Subject Classification (2020):} 28Dxx, 37B10, 37C30, 37D35, 82B26}

\section{Introduction}
\noindent

One of the main objects in equilibrium statistical mechanics, in terms of measures, is the notion of \textit{DLR measure}, which in the probability and mathematical physics communities is synonymous with \textit{Gibbs measure}. The name is in honor of R. Dobrushin \cite{Do1, Do2, Do3}, O. Lanford, and D. Ruelle \cite{LaRu}, who introduced a system of equations involving conditional expectations which characterize the DLR measures, now called \textit{DLR equations}, see \cite{FV, Geo, RaSe}.

We avoid the name Gibbs measure because it is used with several different meanings by the ergodic theory and dynamical systems communities, which sometimes coincide with the notion of DLR measure, but in some cases not. There exist several different notions of Gibbs measures used by dynamicists in addition to DLR measures, some examples are conformal measures \cite{DeUr}, Gibbs measures in the sense of Capocaccia \cite{Capo}, $g$-measures \cite{Kea}, eigenmeasures (associated to the Ruelle operator) \cite{Bo, Ru2}, equilibrium measures, and many other notions. See \cite{Ke, Ki} for positive results when the lattice is $\mathbb{Z}^d$, where the authors study when some of these notions coincide and alphabet $S$ (state space) is finite, see \cite{Mu, Mu1} for the case where $S=\mathbb{N}$. 

For finite state space Markov shifts contained in $S^{\mathbb{N}}$ the Ruelle-Perron-Frobenius Theorem, due to Ruelle \cite{Ru, Ru2}, guarantees the existence of conformal probability measures when the potential belongs to the Walters class. It is known that conformal measures and eigenmeasures are equivalent \cite{ANS, Sa5}. Moreover, we also know that DLR measures and eigenmeasures are equivalent notions even for continuous potentials, see \cite{CLS}. On the other hand, in $S^{\mathbb{Z}}$, there are examples of $g$-measures which are not DLR measures \cite{FGG} and examples of DLR measures which are not $g$-measures \cite{BEvEL}, for characterization when these two notions coincide see \cite{BFV}. Nowadays, after Ruelle \cite{Ru2} and Bowen \cite{Bo}, the study of these measures and their properties is inside of a class of results in ergodic theory called \textit{Thermodynamic Formalism}.

In the last two decades, the theory was extended to the non-compact state space $S=\mathbb{N}$ by several authors \cite{FFY, MaUr, Sa1, Sa5}. In particular, O. Sarig produced a good amount of results with applications in dynamic systems and used the powerful analytical tool of the Ruelle operator. A helpful review of Sarig's contributions is given by Y. Pesin in \cite{Pe}.

In this paper we are focused on countable Markov shifts $\Sigma_A \subseteq\bbn^{\bbn \cup \{0\}}$. When a Markov shift satisfies the BIP property, we can show that both notions conformal and DLR coincide. Apart from these shifts, there exist countable Markov shifts for which the set of DLR measures is larger than the set of conformal measures \cite{Sw}. From the book by Aaronson \cite{Aar}, $\sigma$-finite conformal measures are naturally defined, suggesting that it should be possible to consider infinite DLR measures and the generalized Ruelle-Perron-Frobenius Theorem \cite{Sa1} give us the existence of these measures. So far, in all statements in the literature about DLR measures, the authors consider probability measures. On the other hand, the analogous object to an infinite DLR measure in quantum statistical mechanics is the notion of KMS weight (instead of KMS state) already appears more often in the literature of mathematical physics \cite{Chris, Tak, Tho1, Tho2, Tho3}.

We define the $\sigma$-finite but infinite DLR measures and study the relation with $\sigma$-finite conformal measures. Besides, we investigate the equivalence between these two notions. We show that every $\sigma$-finite conformal measure is a DLR measure, and we characterize when the converse is true for $\sigma$-invariant DLR measures.

In the case of renewal shifts \cite{Io, Sa3}, which are examples of countable Markov shifts that do not satisfy the BIP property, O. Sarig showed that the class of weakly H\"{o}lder continuous potentials $\{\beta\phi\}_{\beta>0}$ has a kind of  ``\textit{good-behavior}" (unique critical point) respect to the phase transition in terms of the recurrence mode, i.e., there exists a $\beta_c>0$ (possibly infinite) for which the potential $\beta\phi$ is positively recurrent for $\beta<\beta_c$ (there exists a conservative conformal measure associated to $\beta\phi$), and transient for $\beta>\beta_c$ (does not exist such conservative conformal measure). He also constructed an example of a topologically mixing Markov shift and a potential with an infinite number of critical points that separate intervals where the potential is recurrent and transient alternately. The uniqueness of the critical point, which is a usual property of ferromagnetic systems with pair interactions in statistical mechanics \cite{FV} (see \cite{FS} for ferromagnetic systems with more complicated interactions where the uniqueness is no longer true), also appears in many models already considered in thermodynamic formalism in the ergodic theory literature, see the references in \cite{Sa3}. 

Since the conservative conformal measures associated to $\beta\phi$ can be finite or infinite,  we address the problem of the existence of \textit{volume-type phase transitions} on countable Markov shifts. For the renewal shift we have a expression to the critical inverse temperature $\tilde{\beta}_c$ such that: $\tilde{\beta}_c\le \beta_c$, the eigenmeasures associated to the potential $\beta\phi$ are finite for $\beta<\tilde{\beta}_c$ and, all $\sigma$-finite eigenmeasures associated to the potential $\beta\phi$ when $\tilde{\beta}_c<\beta<\beta_c$ are infinite . Moreover, when $\Var_1\phi<\infty$, there is no volume-type phase transition, meaning that all eigenmeasures, when they exist, are finite for every $\beta > 0$. 

This kind of phase transition (volume-type) is not detected by points where the pressure is not differentiable. The lack of a connection between phase transitions and critical points for the pressure is not new. Even for the most famous model in statistical mechanics, the bidimensional ferromagnetic Ising model, if we add external fields with decay slow enough, we have the DLR state's uniqueness for every temperature. However, the pressure has a unique critical point (the same point as in the case of zero field), see \cite{BCCP, CV}. Another example is the two-dimensional $ XY $ model, the pressure is nonanalytic, and the model presents a phase transition in the sense of Kosterlitz-Thouless. However, it is known that the model has a unique translation-invariant DLR measure, and it is a conjecture, unsolved for more than four decades, to prove that this measure is the only one for the model. For a recent reference about $XY$ model, see \cite{PelS}, and for a discussion about different notions of phase transitions in statistical mechanics in general, see \cite{vEFS}.

The paper is organized as follows: In Section 2, we introduce some definitions and recall previous results. Section 3 is dedicated to the existence of infinite $\sigma$-finite DLR measure, and we prove that every $\sigma$-finite conformal measure is a DLR measure. In Section 4, we investigate when $\sigma$-finite DLR measures are $\sigma$-finite conformal measures, and the relationship between DLR measures and equilibrium measures. In Section 5, we study the volume-type phase transition on renewal shifts.

The results of this paper are mostly contained in the Ph.D. thesis of the first author \cite{Be}.

\section{Preliminaries and previous results}\label{sec:notation}
\noindent

Let $S:=\mathbb{N}$ be the set of {\it{states}} and $A=\left(A(i,j)\right)_{S\times S}$ a {\it{transition matrix}} of zeroes and ones with no columns or rows which are all zeroes. Let $\mathbb{N}_0 = \bbn\cup \{0\}$,  the {\it{topological Markov shift}} is the set
$$\Sigma_{A}:=\left\{x=(x_0,x_1,x_2,\ldots)\in S^{\mathbb{N}_0}:A(x_i,x_{i+1})=1,\forall i\geq0\right\},$$
equipped with the topology generated by the collection of {\it{cylinders}}
$$[a_0,a_1,\ldots,a_{n-1}]:=\{x\in \Sigma_{A}:x_i=a_i,0\leq i\leq n-1\},$$
where $n\in\bbn$ and $a_i\in S$, $0\leq i\leq n-1$. We denote by $\mathcal{B}$ the Borel $\sigma$-algebra of $\Sigma_A$, that is the smallest $\sigma$-algebra containing the topology generated by the cylinders. An {\it{admissible word}} of length $n$, denoted by $\underline{a}$, is an element of $S^n$ satisfying $[\underline{a}]\neq\emptyset$. The function $\sigma:\Sigma_{A}\to\Sigma_{A}$ defined by $(\sigma x)_i=x_{i+1}$ for every $i\ge 0$ is called the {\it{shift map}}.

The topological Markov shift $\Sigma_{A}$ is {\it{transitive}} if for every $a,b\in S$ there exists $N\in\bbn$ such that $[a]\cap\sigma^{-N}[b]\neq\emptyset$ and it is {\it{topologically mixing}} if for every $a,b\in S$ there exists $N\in\bbn$ such that, for all $n>N$, we have $[a]\cap\sigma^{-n}[b]\neq\emptyset$. We say that $\Sigma_{A}$ satisfies the {\it BIP property} if there exist $N\ge 1$ and $b_1,b_2,\ldots,b_N\in S$ such that, for all $a\in S$, there exist $1\leq i,j\leq N$ such that $A(a,b_i)=A(b_j,a)=1$. We say that $\Sigma_A$ is \emph{row finite} if 
\begin{equation*}
\sum_{b\in S}A(a,b)<\infty \quad \text{for every }a\in S.
\end{equation*}
Note that every row-finite topological Markov shift $\Sigma_A$ is locally compact.

\begin{defi}
The \emph{renewal shift} is the topological Markov shift with the transition matrix $\left(A(i,j)\right)_{S\times S}$ whose entries $A(1,1),A(1,i)$ and $A(i,i-1)$ are equal to $1$ for every $i>1$, and the other entries are equal to $0$.
\end{defi}

Note that the renewal shift is topologically mixing and does not satisfy the BIP property. 
%\bfblue{falar mais sobre renewal, porque \'{e} importante, e referencias}

\begin{figure}[!htb]
\begin{center}
\begin{tikzpicture}[scale=1.5]
\draw   (0,0) -- (7,0);
\node[circle, draw=black, fill=white, inner sep=1pt,minimum size=5pt] (1) at (0,0) {1};
\node[circle, draw=black, fill=white, inner sep=1pt,minimum size=5pt] (2) at (1,0) {2};
\node[circle, draw=black, fill=white, inner sep=1pt,minimum size=5pt] (3) at (2,0) {3};
\node[circle, draw=black, fill=white, inner sep=1pt,minimum size=5pt] (4) at (3,0) {4};
\node[circle, draw=black, fill=white, inner sep=1pt,minimum size=5pt] (5) at (4,0) {5};
\node[circle, draw=black, fill=white, inner sep=1pt,minimum size=5pt] (6) at (5,0) {6};
\node[circle, draw=black, fill=white, inner sep=1pt,minimum size=5pt] (7) at (6,0) {7};
\node (8) at (7,0) {};
\draw[->, >=stealth] (2)  to (1);
\draw[->, >=stealth] (3)  to (2);
\draw[->, >=stealth] (4)  to (3);
\draw[->, >=stealth] (5)  to (4);
\draw[->, >=stealth] (6)  to (5);
\draw[->, >=stealth] (7)  to (6);
\draw[->, >=stealth] (8)  to (7);
\node [minimum size=10pt,inner sep=0pt,outer sep=0pt] {} edge [in=200,out=100,loop, >=stealth] (0);
\draw[->, >=stealth] (1)  to [out=90,in=90, looseness=1] (2);
\draw[->, >=stealth] (1)  to [out=90,in=90, looseness=1] (3);
\draw[->, >=stealth] (1)  to [out=90,in=90, looseness=1] (4);
\draw[->, >=stealth] (1)  to [out=90,in=90, looseness=1] (5);
\draw[->, >=stealth] (1)  to [out=90,in=90, looseness=1] (6);
\draw[->, >=stealth] (1)  to [out=90,in=90, looseness=1] (7);
\end{tikzpicture}
\end{center}
\caption{Renewal shift.}\label{fig:renewal}
\end{figure}
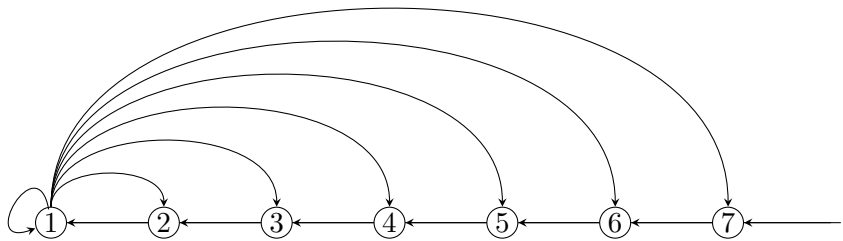

A function $\phi:\Sigma_{A}\to\bbr$ is called a {\it {potential}}. For every $n\geq 1$ and potential $\phi$, the {\it $n$-variation of $\phi$} is given by
\begin{equation*}
\Var_n\phi:=\sup\left\{\lvert\phi(x)-\phi(y)\rvert :~x,y\in\Sigma_{A}, x_i=y_i,0\leq i\leq n-1\right\}.
\end{equation*}

A potential $\phi$ is called {\it{weakly H\"{o}lder continuous}} if there exist $H_{\phi}>0$ and $\theta\in(0,1)$ such that for all $n\geq 2$, $\Var_n\phi\leq H_{\phi}\theta^n$. We say that a potential $\phi$ has {\it summable variation} if $\sum_{n\geq2}\Var_n\phi<\infty$.
 
For every $n\in\bbn$, we define $\phi_n(x):=\sum_{i=0}^{n-1}\phi(\sigma^ix)$ the $n$-th ergodic sum. A potential $\phi$ satisfies the {\it Walters condition} if:
$$
 \sup_{n\geq 1}\Var_{n+k}\phi_n<\infty,\text{ for every } k\geq1,\ \text{ and }\lim_{k\to \infty}\sup_{n\geq 1}\Var_{n+k}\phi_n= 0.
$$
Note that every potential with summable variation satisfies the Walters condition. Here, we allow potentials $\phi$ satisfying $\Var_1\phi=\infty$.

Two potentials $\phi,\varphi:\Sigma_{A}\to\mathbb{R}$ are called {\it cohomologous} via a function $h:\Sigma_A\to\mathbb{R}$ if $\phi=\varphi+h-h\circ\sigma$, and it will be denoted by $\phi\sim\varphi$. A function $f:\Sigma_A\to \mathbb{R}$ is \emph{bounded away from zero} if $\inf_{x\in \Sigma_A} f(x)>0$, and $f$ is \emph{bounded away from infinity} if $\sup_{x\in \Sigma_A} f(x)<\infty$.

For every $n\geq 1$ and $a\in S$, set 
$$
Z_n(\phi,a):=\sum_{\sigma^nx=x}e^{\phi_n(x)}\mathbbm{1}_{[a]}(x)\ \mbox{ and }\ Z_n^{*}(\phi,a):=\sum_{\sigma^nx=x}e^{\phi_n(x)}\mathbbm{1}_{[\phi_a=n]}(x),
$$
where $\phi_a(x)=\mathbbm{1}_{[a]}(x)\inf\{n\geq 1:\sigma^nx\in[a]\}$ (where $\inf \emptyset:=\infty$ and $0.\infty:=0$). The {\it Gurevich pressure} of $\phi$ is defined by
\be\label{Gur}
P_{G}(\phi):=\lim\limits_{n\rightarrow\infty}\frac{1}{n}\log Z_n(\phi,a).
\ee
Due to Sarig \cite{Sa1}, the limit $(\ref{Gur})$ exists and does not depend on $a\in S$ if $\Sigma_{A}$ is topologically mixing and $\phi$ satisfies the Walters condition. 

Denote by $\mathcal{M}^1(\Sigma_{A})$ the set of probability measures on $\Sigma_{A}$ and $\mathcal{M}_{\sigma}^1(\Sigma_{A})$ the set of $\sigma$-invariant probability measures on $\Sigma_{A}$. If $\sup\phi<\infty$, the Gurevich pressure can be expressed by 
\be\label{equi-meas}
P_G(\phi)=\sup\bigg\{h_{\nu}(\sigma)+\int \phi \d\nu:~\nu\in\mathcal{M}_{\sigma}^1\big(\Sigma_{A}\big)\mbox{ s.t. }-\int \phi \d\nu<\infty\bigg \},
\ee
where $h_{\nu}(\sigma)$ is the metric entropy of $\nu$.

A measure $\mu\in\mathcal{M}_{\sigma}^1\big(\Sigma_{A}\big)$ is an {\it equilibrium measure} for $\phi$ if the supremum of (\ref{equi-meas}) is attained for $\mu$, i.e., 
$$P_G(\phi)=h_{\mu}(\sigma)+\int\phi\d\mu.$$
 
 Given two $\sigma$-finite measures $\mu$ and $\nu$ in measurable space $\left(\Omega,\calf\right)$, we denote by $\mu \ll \nu$ if, for every $E\in \mathcal{F}$ such that $\nu(E)=0$, we have $\mu(E)=0$. We denote by $\mu\sim \nu$ if $\mu\ll \nu$ and $\nu \ll \mu$.
 
  A $\sigma$-finite measure $\nu$ in $\calb$ is called \emph{non-singular} if $\nu\circ\sigma^{-1}\sim\nu$. Define $\nu\circledcirc\sigma$ the measure in $\calb$ given by
$$
\nu\circledcirc\sigma(E):=\sum_{a\in S}\nu\left(\sigma\left(E\cap [a]\right)\right).
$$

Note that $\nu\ll\nu\circledcirc\sigma$ when $\nu$ is a non-singular measure. The $\sigma$-finite non-singular measure $\nu$ is called {\it conservative} if every set $W\in\calb$ such that $\{\sigma^{-n}W\}_{n\geq 0}$ is pairwise disjoint mod $\nu$ satisfies $W=\emptyset\mod \nu$. These $\sigma$-finite measures satisfy the Poincar\'{e} Recurrence Theorem, see Theorem $2.1$ in \cite{Sa5}.

\begin{defi}
Let $\Sigma_A$ be a topological Markov shift, $\nu$ a $\sigma$-finite measure in $\calb$ and $\phi:{\Sigma_A}\to\mathbb{R}$ a measurable potential. For a fixed $\lambda>0$, the measure $\nu$ is called \emph{$(\phi,\lambda)$-conformal} if
\begin{equation*}
\frac{\d\nu}{\d\nu\circledcirc\sigma}=\lambda^{-1}e^{\phi},\quad\nu\circledcirc\sigma\mbox{-a.e.}
\end{equation*} 
\end{defi}

%An important tool to study the conformal, the equilibrium measures and the Gurevich pressure, is the 
For a fixed potential $\phi:\Sigma_{A}\to\mathbb{R}$,
the {\it Ruelle operator} $L_{\phi}$
%\cite{Ru} which 
is defined by
\begin{equation}\label{op-Ruelle}
L_{\phi}f(x):=\sum_{\sigma(y)=x}e^{\phi(y)}f(y).
\end{equation} 
When $|S|<\infty$,  the Ruelle operator is well defined for every continuous function $f$. However, for countable Markov shifts, we need conditions to be well-defined. See Theorems \ref{teo-RPF}, \ref{RPFt} and \cite{MaUr}. 

%For finite alphabet $S$, we know that the dual of the Ruelle operator $L^*_{\phi}$ is well-defined by Riesz Lemma, i.e., for every continuous function $f$,
%$$
%\int L_{\phi}f(x)\d\nu(x)=\int f(x)\d L^*_{\phi}\nu(x).
%$$

The following proposition shows that $(\phi,\lambda)$-conformal measures are eigenmeasures of the Ruelle operator.

\begin{prop}\label{conf-auto} Let $\Sigma_{A}$ be a topological Markov shift, $\nu$ a $\sigma$-finite measure in $\calb$, $\phi:{\Sigma_A}\to\mathbb{R}$ a measurable potential and $\lambda>0$. Then $\nu$ is $(\phi,\lambda)$-conformal if, only and if,
\be\label{dual}	
\int L_{\phi}f(x)\d\nu(x)=\lambda\int f(x)\d\nu(x),\quad \mbox{ for each }f\in L^1(\nu).
\ee
\end{prop}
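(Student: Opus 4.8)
The plan is to prove both directions by unwinding the definition of $\nu \circledcirc \sigma$ and using the change-of-variables formula for pushforwards of measures restricted to cylinders. First I would observe that since $A$ has no zero rows, the map $\sigma$ restricted to each cylinder $[a]$ is a bijection onto $\sigma([a])$ (indeed onto $\bigcup_{b:\,A(a,b)=1}[b]$), so for a measurable $g \geq 0$ one has $\int_{\sigma([a])} g \, d(\nu|_{[a]} \circ \sigma^{-1}) = \int_{[a]} g\circ\sigma \, d\nu$. Summing over $a \in S$ gives the basic identity $\int g \, d(\nu \circledcirc \sigma) = \sum_{a \in S}\int_{[a]} g\circ \sigma \, d\nu$, valid for nonnegative measurable $g$ (and then for $g \in L^1$ by the usual decomposition).

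Next, I would establish the equivalence. Assume $\nu$ is $(\phi,\lambda)$-conformal, i.e. $d\nu = \lambda^{-1} e^{\phi}\, d(\nu\circledcirc\sigma)$. Take $f \in L^1(\nu)$; the key computation is
\begin{align*}
\int L_\phi f \, d\nu
&= \lambda^{-1}\int L_\phi f \cdot e^{\phi} \, d(\nu \circledcirc \sigma)
= \lambda^{-1} \sum_{a \in S} \int_{[a]} \big(L_\phi f \cdot e^{\phi}\big)\circ \sigma \, d\nu.
\end{align*}
Now for $y \in [a]$ one has $(L_\phi f)(\sigma y) = \sum_{\sigma z = \sigma y} e^{\phi(z)} f(z)$; the point is that each preimage branch $z$ lies in a unique cylinder $[b]$, and as $y$ ranges over $[a]$ the term coming from the branch $z = y$ itself contributes $e^{\phi(y)}f(y)$. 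More cleanly: writing $e^{\phi(\sigma y)}\cdot(L_\phi f)(\sigma y)$ and summing over $a$, I would interchange the sum over $a$ with the branch sum and use that $\{[a]\}_{a\in S}$ partitions $\Sigma_A$; the cleaner route is actually to prove the identity $\sum_{a\in S}\int_{[a]}(g\circ\sigma)\, h \, d\nu = \int g \cdot \big(\sum_{\sigma z = \cdot}\, h(z)\big)\, d\nu$ won't quite work directly, so instead I would use Proposition's own structure: apply the $\circledcirc\sigma$ identity to $g = f$ and combine with the Radon–Nikodym relation to get $\int L_\phi f\, d\nu = \sum_{a}\int_{[a]} f \, d\nu = \int f\, d\nu$ times... — let me restate. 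The honest computation is: $\int L_\phi f \, d\nu = \int\!\big(\sum_{\sigma y = x} e^{\phi(y)}f(y)\big)d\nu(x)$, and by Tonelli/Fubini applied to the branch decomposition this equals $\sum_{a\in S}\int_{[a]} e^{\phi(y)} f(y)\, d(\nu\circledcirc\sigma)(y)$, because integrating a function summed over $\sigma$-preimages against $\nu$ is exactly integrating the function itself against $\nu\circledcirc\sigma$ (this is the defining property of $\circledcirc$). Then substituting $d(\nu\circledcirc\sigma) = \lambda e^{-\phi} d\nu$ gives $\lambda \int f \, d\nu$.

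For the converse, assume \eqref{dual} holds for all $f \in L^1(\nu)$. I would test it against $f = \mathbbm{1}_{[a]}\cdot g$ for cylinders $[a]$ and bounded measurable $g$, or more efficiently note that $\int L_\phi f \, d\nu = \int f \, d(\nu \circledcirc \sigma)$ as an identity of measures (by the branch-decomposition argument above, which holds for any $\sigma$-finite $\nu$ without conformality). Hence \eqref{dual} says $\int f \, d(\nu\circledcirc\sigma) = \lambda \int f\, d\nu = \lambda \int f \cdot (d\nu/d(\nu\circledcirc\sigma)) \, d(\nu\circledcirc\sigma)$ — wait, this presumes $\nu \ll \nu\circledcirc\sigma$. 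To get that: if $(\nu\circledcirc\sigma)(E) = 0$ then applying the identity with $f = \mathbbm{1}_{\sigma^{-1}(E)}$-type test functions forces $\nu$ to vanish appropriately; more directly, \eqref{dual} with $f = \mathbbm{1}_E \circ \sigma$ composed suitably shows $\nu$ is absorbed. Once $\nu \ll \nu \circledcirc \sigma$, the identity $\int f e^{-\phi} \, d(\nu\circledcirc\sigma) \cdot \lambda = \int f \, d(\nu\circledcirc\sigma)$ — no: rearrange \eqref{dual} using $\int L_\phi f \, d\nu = \int f \, d(\nu\circledcirc\sigma)$ to get $\int f \, d(\nu \circledcirc\sigma) = \lambda \int f \, d\nu$ for all $f \in L^1(\nu)$; applying this to $f \cdot e^{\phi}$ (legitimate by a truncation/monotone-class argument, noting $e^{\phi} > 0$) and comparing yields $d\nu/d(\nu\circledcirc\sigma) = \lambda^{-1} e^{\phi}$ a.e., which is conformality.

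The main obstacle I anticipate is the measure-theoretic bookkeeping around $\sigma$-finiteness and the justification that the branch-sum identity $\int L_\phi f\, d\nu = \int f \, d(\nu\circledcirc\sigma)$ holds in full generality and that one may legitimately substitute unbounded test functions like $f e^{\phi}$ or invoke Radon–Nikodym — i.e. carefully extending from nonnegative/bounded $f$ on individual cylinders to all of $L^1(\nu)$ via monotone convergence and $\sigma$-additivity over the countable partition $\{[a]\}_{a\in S}$. The algebraic heart of the statement is essentially tautological once the correct "duality identity" $\int L_\phi f\, d\nu = \int f\, d(\nu\circledcirc\sigma)$ is isolated; everything else is verifying hypotheses for Tonelli and Radon–Nikodym.
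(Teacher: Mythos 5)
Your forward direction is essentially fine: the identity you isolate there, $\int L_{\phi}f\,\d\nu=\int e^{\phi}f\,\d(\nu\circledcirc\sigma)$ (proved first for nonnegative $f$ by the cylinder-by-cylinder bijectivity of $\sigma$ and Tonelli, then extended to $L^1(\nu)$), together with $\d\nu=\lambda^{-1}e^{\phi}\,\d(\nu\circledcirc\sigma)$ and the strict positivity of $e^{\phi}$, gives $\int L_{\phi}f\,\d\nu=\lambda\int f\,\d\nu$; the nonnegative case also shows that $L_{\phi}f$ is defined $\nu$-a.e.\ and integrable for $f\in L^1(\nu)$. This duality identity is exactly the right engine for the statement.

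The converse, however, rests on a misstated identity and does not go through as written. You assert that $\int L_{\phi}f\,\d\nu=\int f\,\d(\nu\circledcirc\sigma)$ "holds for any $\sigma$-finite $\nu$ without conformality", but this is false for general $\phi$: it is the $\phi\equiv 0$ case of the identity you correctly used in the forward direction, and the factor $e^{\phi}$ cannot be dropped. Combined with $(\ref{dual})$ it would give $\int f\,\d(\nu\circledcirc\sigma)=\lambda\int f\,\d\nu$ for all $f\in L^1(\nu)$, i.e.\ it would identify the density $\frac{\d\nu}{\d\nu\circledcirc\sigma}$ with the constant $\lambda^{-1}$, and subsequently plugging $fe^{\phi}$ into that relation only yields $\int fe^{\phi}\,\d(\nu\circledcirc\sigma)=\lambda\int fe^{\phi}\,\d\nu$, which is not the conformality relation; the exponential never reappears. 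The repair is short and removes the absolute-continuity worry you left vague: for $E\in\calb$ with $\nu(E)<\infty$ take $f=\mathbbm{1}_{E}$ in $(\ref{dual})$ and use the \emph{correct} duality identity to get $\lambda\,\nu(E)=\int L_{\phi}\mathbbm{1}_{E}\,\d\nu=\int_{E}e^{\phi}\,\d(\nu\circledcirc\sigma)$; by $\sigma$-finiteness of $\nu$ and monotone convergence this extends to every $E\in\calb$, so $\nu=\lambda^{-1}e^{\phi}\cdot(\nu\circledcirc\sigma)$ as measures. This is precisely $(\phi,\lambda)$-conformality, and it exhibits $\nu\ll\nu\circledcirc\sigma$ automatically, so no separate argument for absolute continuity is needed.
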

Equation $(\ref{dual})$ will be denoted simply by $L_{\phi}^{*}\nu=\lambda\nu$.

Proposition \ref{prop:transferencia} is a particular result of Proposition 1.4.1 in \cite{Aar} and Proposition 2.3 in \cite{Sa5}.

\begin{prop}\label{prop:transferencia}
	Let $\Sigma_{A}$ be a topological Markov shift and $\nu$ a $\sigma$-finite measure. If  $\nu$ is $\sigma$-invariant, then
	\begin{equation*}
	\sum_{\sigma y=x}\frac{\d \nu}{\d \nu\circledcirc\sigma}(y)=1,\quad \nu\text{-a.e}.
	\end{equation*}
	
\end{prop}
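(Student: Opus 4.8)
The plan is to use $\sigma$-invariance together with Proposition~\ref{conf-auto}. First note that if $\nu$ is $\sigma$-invariant then $\nu\circ\sigma^{-1}=\nu\sim\nu$, so $\nu$ is non-singular; hence, by the remark following the definition of $\circledcirc$, $\nu\ll\nu\circledcirc\sigma$ and $g:=\frac{\d\nu}{\d\nu\circledcirc\sigma}$ is a well-defined nonnegative measurable function. By the very definition of a conformal measure, $\nu$ is then $(\log g,1)$-conformal (with the convention $e^{-\infty}:=0$ where $g$ vanishes), so Proposition~\ref{conf-auto} gives
$$\int_{\Sigma_A}L_{\log g}f\,\d\nu=\int_{\Sigma_A}f\,\d\nu\qquad\text{for every }f\in L^1(\nu).$$

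Now fix $E\in\calb$ with $\nu(E)<\infty$. By $\sigma$-invariance $\nu(\sigma^{-1}E)=\nu(E)<\infty$, so $f:=\mathbbm{1}_{\sigma^{-1}E}\in L^1(\nu)$. For this $f$,
$$L_{\log g}f(x)=\sum_{\sigma y=x}e^{\log g(y)}\mathbbm{1}_{\sigma^{-1}E}(y)=\sum_{\sigma y=x}g(y)\,\mathbbm{1}_E(\sigma y)=\mathbbm{1}_E(x)\sum_{\sigma y=x}g(y),$$
where in the last step we used that every $y$ in the sum satisfies $\sigma y=x$. Plugging this into the identity above yields
$$\int_E\Big(\sum_{\sigma y=x}g(y)\Big)\d\nu(x)=\int_{\Sigma_A}f\,\d\nu=\nu(\sigma^{-1}E)=\nu(E)=\int_E 1\,\d\nu(x).$$
Since $\nu$ is $\sigma$-finite, $\Sigma_A$ is an increasing union of sets of finite $\nu$-measure; letting $E$ range over such a sequence forces $\sum_{\sigma y=x}g(y)=1$ for $\nu$-a.e.\ $x$.

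If one prefers to avoid Proposition~\ref{conf-auto}, the same computation can be carried out by hand using the change-of-variables identity $\int h\,\d(\nu\circledcirc\sigma)=\int\big(\sum_{\sigma y=x}h(y)\big)\d\nu(x)$ valid for every nonnegative measurable $h$, which follows by testing on $h=\mathbbm{1}_F$ (for each $a\in S$ the restriction $\sigma|_{[a]}$ is a bijection onto $\{x:A(a,x_0)=1\}$, so $\sum_{\sigma y=x}\mathbbm{1}_F(y)=\sum_{a\in S}\mathbbm{1}_{\sigma(F\cap[a])}(x)$, and integrating against $\nu$ reproduces the definition of $\nu\circledcirc\sigma$) and then extending by monotone convergence. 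The only genuinely delicate point is this bookkeeping --- making sure $x\mapsto\sum_{\sigma y=x}g(y)$ is measurable and that the countable sum over $\sigma$-preimages matches $\sum_{a\in S}\nu(\sigma(\,\cdot\cap[a]))$ on the non-compact shift; once that is settled, everything else is a formal manipulation and an application of $\sigma$-finiteness.
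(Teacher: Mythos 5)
Your argument is correct: the paper gives no written proof of this proposition, only a citation to Aaronson's Proposition 1.4.1 and Sarig's Proposition 2.3, and your computation --- the change-of-variables identity $\int h\,\d(\nu\circledcirc\sigma)=\int\big(\sum_{\sigma y=x}h(y)\big)\d\nu(x)$ combined with the Radon--Nikodym relation and $\sigma$-finiteness --- is exactly the standard transfer-operator duality argument that those citations encapsulate. The only caveat is that Proposition \ref{conf-auto} is stated for real-valued potentials whereas $\log g$ may take the value $-\infty$ where $g$ vanishes; your ``by hand'' fallback avoids this entirely and is the cleaner version of the step.
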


\begin{defi}\label{modes-recu} Let ${\Sigma_A}$ be a topologically mixing Markov shift and $\phi:{\Sigma_A}\to\mathbb{R}$ a potential satisfying the Walters condition such that $P_G(\phi)< \infty$. Fix some $a\in S$. We say that the potential is:
	\begin{itemize}
		\item[i)] \emph{Recurrent} if $\sum_{n\geq1}e^{-nP_G(\phi)}Z_n(\phi,a)=\infty$.
		\item[ii)] \emph{Positive recurrent} if $\phi$ is recurrent and $\sum_{n\geq1}ne^{-nP_G(\phi)}Z_n^{*}(\phi,a)<\infty$.
		\item[iii)] \emph{Null recurrent} if $\phi$ is recurrent and $\sum_{n\geq1}ne^{-nP_G(\phi)}Z_n^{*}(\phi,a)=\infty$.
		\item[iv)] \emph{Transient} if $\sum_{n\geq1}e^{-nP_G(\phi)}Z_n(\phi,a)<\infty$.
	\end{itemize}
\end{defi}

Since $\Sigma_A$ is topologically mixing, all modes of recurrence defined above are independent of $a\in S$. When $|S|<\infty$ we have that any $\phi$ is positive recurrent. The following theorem given by O. Sarig characterizes each mode of recurrence.

\begin{teo}[Generalized Ruelle-Perron-Frobenius Theorem, \cite{Sa1}]\label{teo-RPF} Let ${\Sigma_A}$ be a topologically mixing Markov shift, $\phi:{\Sigma_A}\to\mathbb{R}$ a potential that satisfies the Walters condition and $P_G(\phi)<\infty$. Then:
	\begin{itemize}
		\item[i)] $\phi$ is positive recurrent if, and only if, there exist $\lambda>0$, a positive continuous function $h$, 
		
\hspace{-0.55cm} and a conservative measure $\nu$ which is finite on cylinders, such that $L_{\phi}h=\lambda h$, $L_{\phi}^{*}\nu=\lambda\nu$, 

\hspace{-0.55cm} and $\int h\d\nu=1$. In this case $\lambda=e^{P_G(\phi)}$.
		\item[ii)] $\phi$ is null recurrent if, and only if, there exist $\lambda>0$, a positive continuous function $h$, and
		
\hspace{-0.55cm} a conservative measure $\nu$ which is finite on cylinders, such that $L_{\phi}h=\lambda h$, $L_{\phi}^{*}\nu=\lambda\nu$, and 

\hspace{-0.55cm} $\int h\d\nu=\infty$. In this case $\lambda=e^{P_G(\phi)}$.
		\item[iii)] $\phi$ is transient if, and only if, there is no conservative measure $\nu$ which is finite on cylinders 
		
\hspace{-0.55cm} such that $L_{\phi}^{*}\nu=\lambda\nu$ for some $\lambda>0$.
	\end{itemize}
\end{teo}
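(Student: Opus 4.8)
This statement is Sarig's Generalized Ruelle--Perron--Frobenius Theorem, so the plan is to reconstruct his proof, whose backbone is \emph{inducing on a single state} together with a renewal dichotomy. Fix a state $a\in S$ and write $\lambda:=e^{P_G(\phi)}$. I would begin from the renewal identity hidden in the definitions of $Z_n(\phi,a)$ and $Z_n^{*}(\phi,a)$: cutting a periodic orbit in $[a]$ at its successive passages through $[a]$ gives $Z_n(\phi,a)=\sum_{k=1}^{n}Z_k^{*}(\phi,a)\,Z_{n-k}(\phi,a)$ for $n\ge 1$ (with $Z_0:=1$), that is,
$$\sum_{n\ge 0}Z_n(\phi,a)\,x^{n}=\Bigl(1-\sum_{n\ge 1}Z_n^{*}(\phi,a)\,x^{n}\Bigr)^{-1},$$
and, since the Gurevich pressure equals $\lim_n\tfrac1n\log Z_n(\phi,a)$ and is independent of $a$, both power series have radius of convergence $\lambda^{-1}$. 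The Walters condition enters already here: it is what makes $P_G(\phi)$ well defined and, via bounds of the form $e^{-C}\le Z_n(\phi,a)/Z_n(\phi,b)\le e^{C}$, supplies the uniform distortion estimates used throughout. Setting $\Lambda^{*}(x):=\sum_{n\ge 1}Z_n^{*}(\phi,a)\,x^{n}$, monotonicity forces $\Lambda^{*}(\lambda^{-1})\le 1$, and the modes of Definition~\ref{modes-recu} read off as: $\Lambda^{*}(\lambda^{-1})<1\Leftrightarrow\sum_n\lambda^{-n}Z_n(\phi,a)<\infty$ (transient); $\Lambda^{*}(\lambda^{-1})=1\Leftrightarrow\sum_n\lambda^{-n}Z_n(\phi,a)=\infty$ (recurrent); and within the recurrent case positive versus null recurrence is exactly $\sum_n n\lambda^{-n}Z_n^{*}(\phi,a)<\infty$ versus $=\infty$.

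Next I would pass to the first-return map to $[a]$. Because $\Sigma_A$ is a Markov shift, the induced system on $[a]$ is, topologically, a full shift over the countable alphabet of first-return loops at $a$, carrying the induced potential $\bar\phi:=\phi_{\phi_a}$ (the ergodic sum of $\phi$ along one return excursion), and the Walters condition on $\phi$ forces $\bar\phi$ to have summable variation on this full shift. The content of the renewal identity is that the condition $\Lambda^{*}(\lambda^{-1})=1$ is precisely that the induced, pressure-normalized potential $\bar\phi-|{\cdot}|\,P_G(\phi)$ has Gurevich pressure zero. Since a full shift trivially satisfies the BIP property, for this induced potential I may invoke the BIP case of the Ruelle--Perron--Frobenius theorem \cite{MaUr, Sa1}: in the recurrent case there exist a positive continuous $\bar h$ on $[a]$ and a conservative $\sigma_{[a]}$-eigenmeasure $\bar\nu$, finite on cylinders, with eigenvalue $1$ and $\int_{[a]}\bar h\,\d\bar\nu=1$.

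For parts \textbf{(i)} and \textbf{(ii)} (the recurrent case) I then \emph{spread out} $\bar h$ and $\bar\nu$ to all of $\Sigma_A$. For the eigenfunction the natural formula is
$$h(x):=\sum_{n\ge 0}\lambda^{-n}\,\bigl(\mathcal{L}^{\,n}\bar h\bigr)(x),$$
where $\mathcal{L}$ is the transfer operator $L_\phi$ restricted to those preimages whose forward orbit has not yet returned to $[a]$ (a first-passage decomposition); the eigenmeasure $\nu$ is obtained by integrating the corresponding first-passage kernels against $\bar\nu$. Using the renewal structure one verifies the eigen-equations $L_\phi h=\lambda h$ and $L_\phi^{*}\nu=\lambda\nu$ on all of $\Sigma_A$; convergence, strict positivity and continuity of the series for $h$, together with the $\sigma$-finiteness and finiteness-on-cylinders of $\nu$, follow because the taboo (first-passage) generating functions are dominated by $\Lambda^{*}$ and hence summable at $\lambda^{-1}$, combined with the Walters distortion bounds. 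Conservativity of $\nu$ follows from recurrence of $\phi$ via the Halmos recurrence criterion. Finally Kac's formula (return-time averaging) identifies $\int h\,\d\nu$, with the normalization fixed on $[a]$, with $\sum_{n\ge 1}n\lambda^{-n}Z_n^{*}(\phi,a)$ up to a multiplicative constant, so this integral is finite precisely when $\phi$ is positive recurrent and infinite when $\phi$ is null recurrent; and $\lambda=e^{P_G(\phi)}$ by construction.

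It remains to prove \textbf{(iii)} and the converses. For (iii), if $\phi$ is transient, assume for contradiction that there is a conservative $\nu$, finite on cylinders, with $L_\phi^{*}\nu=\lambda'\nu$ for some $\lambda'>0$; a standard comparison of $(\lambda')^{n}\nu([a])$ with the periodic sums $Z_n(\phi,a)$ forces $\lambda'=\lambda=e^{P_G(\phi)}$, and inducing $\nu$ on $[a]$ yields a conservative eigenmeasure of the induced pressure-normalized Ruelle operator with eigenvalue $1$; conservativity and the Poincar\'e recurrence theorem (Theorem~2.1 in \cite{Sa5}) then force $\sum_n\lambda^{-n}Z_n(\phi,a)=\infty$, i.e.\ recurrence, a contradiction. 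Running the same inducing argument starting from an eigentriple $(h,\nu,\lambda)$ as in (i)/(ii) shows that the underlying potential is recurrent, and the value of $\int h\,\d\nu$ (again via Kac) separates positive from null recurrence; together with the constructions of the previous paragraph this closes all the equivalences. The step I expect to be the \textbf{main obstacle} is the spreading-out construction: showing that the first-passage series defining $h$ is finite, strictly positive and continuous, and that the extended $\nu$ is genuinely $\sigma$-finite, conservative and finite on cylinders. This is exactly where the Walters condition is indispensable --- through uniform control of $\Var_{n+k}\phi_n$, which bounds the distortion of long ergodic sums along return excursions --- and it is what makes the countable-alphabet, non-BIP setting genuinely delicate rather than a transcription of the finite-state theorem.
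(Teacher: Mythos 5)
The paper does not prove this statement: it is quoted verbatim as Sarig's Generalized Ruelle--Perron--Frobenius Theorem and cited to \cite{Sa1}, so there is no internal proof to compare against. Your outline is essentially a faithful reconstruction of the proof in the cited literature (inducing on a state, the renewal identity $Z_n=\sum_k Z_k^{*}Z_{n-k}$, the dichotomy via $\Lambda^{*}(\lambda^{-1})$, spreading out the induced eigen-data by first-passage decompositions, Kac's formula to separate positive from null recurrence, and the conservativity/comparison argument for transience), and the architecture is sound. One small correction: the Walters condition on $\phi$ does not give the induced potential summable variations, only a Walters-type modulus $\Var_k\bar\phi\le\sup_n\Var_{n+k}\phi_n$ (with $\Var_1\bar\phi<\infty$); this is harmless because the full-shift (BIP) Ruelle--Perron--Frobenius theorem you invoke is available under the Walters condition, but the step should be stated that way rather than via summable variation.
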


The previous theorem says nothing about the finiteness of the measure $\nu$. In general, this could be infinite, as shown in Example \ref{exem1}. But it is known that when $\Sigma_{A}$ satisfies the BIP property and $\Var_1\phi<\infty$ then $\nu$ is finite \cite{Sa4}.

For positively recurrent potential $\phi$, under the conditions of Theorem \ref{teo-RPF}, Sarig \cite{Sa1} showed that $m:=h\d \nu$ is an invariant probability measure, which we call \emph{Ruelle-Perron-Frobenius (RPF) measure}. Moreover, if $h_m(\sigma)<\infty$, then $m$ is the unique equilibrium measure for $\phi$.

Theorem \ref{teo-RPF} guarantees the existence of conservative conformal measures and eigenfunctions for the Ruelle operator in the case of recurrent potentials, but not for transient potentials. V. Cyr \cite{Cyr1,Cyr2} studied transient potentials on topological Markov shifts, showing, for instance, the existence of eigenmeasures of the dual of the Ruelle operator. Moreover, O. Shwartz \cite{Sw} showed the existence of the eigenfunctions for the Ruelle operator in the case of locally compact topological Markov shifts, see Theorem \ref{RPFt}.

Let $\Sigma_A$ be a transitive Markov shift, and $\phi$ a potential with summable variation. Let $\lambda>0$. We say that $\phi$ is \emph{$\lambda$-transient} if $\sum_{n\ge 1} \lambda^{-n}Z_n(\phi,a)<\infty$ for some $a\in S$. Note that the item iv) of Definition \ref{modes-recu} is a particular case when $\lambda=e^{P_G(\phi)}$.

\begin{teo}[\cite{Cyr2,Sw}]\label{RPFt}
Let $\Sigma_A$ be a transitive and locally compact topological Markov shift, and $\phi:\Sigma_A\to \mathbb{R}$ a $\lambda$-transient potential with summable variation. There exists a $\sigma$-finite measure $\nu$ in $\Sigma_A$ such that~$\nu$ is positive, finite in each cylinder, and $L^*_{\phi}\nu=\lambda \nu$. Moreover, there exists a continuous function $h:\Sigma_A\to (0,+\infty)$ such that $L_{\phi}h=\lambda h$ and $h\d \nu$ is an invariant finite measure.
\end{teo}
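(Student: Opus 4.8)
The plan is to follow the constructions of Cyr \cite{Cyr2} for the eigenmeasure and of Shwartz \cite{Sw} for the eigenfunction. After replacing $\phi$ by $\phi-\log\lambda$ we may assume $\lambda=1$, so that $\sum_{n\ge 1}Z_n(\phi,a)<\infty$. Two standing facts are used throughout. First, summable variation gives bounded distortion: there is $C_\phi>0$ with $|\phi_n(x)-\phi_n(y)|\le C_\phi$ whenever $x_i=y_i$ for $0\le i\le n$ (this uses only $\sum_{k\ge 2}\Var_k\phi<\infty$ and therefore survives $\Var_1\phi=\infty$). Second, together with transitivity this makes the finiteness of $\sum_n Z_n(\phi,a)$, and of the first-return sums $\sum_n Z_n^{*}(\phi,a)$, independent of $a$; and the renewal-type relation between the generating functions $\sum_{n\ge 0}Z_n(\phi,a)z^n$ and $\sum_{n\ge 1}Z_n^{*}(\phi,a)z^n$ then forces $\sum_{n\ge 1}Z_n^{*}(\phi,a)<1$. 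It is this strict inequality that is the analytic content of transience and that makes the series below converge.

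First I would build the eigenmeasure $\nu$. Fix $a\in S$. Morally, $\nu$ restricted to $[a]$ is the conformal measure of the first-return (induced) system on $[a]$, which is a countable Markov shift whose ``loop potential'' has total exponential mass $\sum_n Z_n^{*}(\phi,a)<1$; for such a sub-stochastic induced system the conformal measure is a convergent geometric-type series. Concretely, for an admissible word $\underline{w}$ with $\underline{w}_0=a$ I would set
\begin{equation*}
\nu([\underline{w}]):=\sum_{n\ge 0}\ \sum_{\underline{u}}\ e^{\,\phi_{|\underline{u}|}(z_{\underline{u}\,\underline{w}})},
\end{equation*}
the inner sum running over admissible words $\underline{u}$ such that $\underline{u}\,\underline{w}$ is admissible, $|\underline{u}\,\underline{w}|=|\underline{w}|+n$, and $\underline{u}$ meets $a$ only at its first coordinate (so $\underline{u}$ encodes the orbit segment between the previous visit to $[a]$ and $\underline{w}$), where $z_{\underline{u}\,\underline{w}}\in[\underline{u}\,\underline{w}]$ is arbitrary; convergence on each such cylinder follows from transience plus bounded distortion. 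One extends $\nu$ to cylinders not based at $a$ by the conformality relation and transitivity, or equivalently writes $\nu=\sum_{n\ge 0}(L_\phi^{*})^n\mu_a$ for a suitable seed measure $\mu_a$ carried by $[a]$. That $L_\phi^{*}\nu=\nu$ is a telescoping/renewal computation (by Proposition \ref{conf-auto} this is the same as $(\phi,1)$-conformality); positivity is transitivity, and finiteness on cylinders is the two estimates above. Row-finiteness (local compactness) enters here: it keeps every sum over $\sigma$-preimages finite, so that $L_\phi$ maps $C_c(\Sigma_A)$ into itself and $L_\phi^{*}$ genuinely acts on Radon measures.

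Next the eigenfunction. The equation $L_\phi h=h$ is, after passing to the transposed transition matrix $A^{\top}$, the conformal equation $L_{\widehat{\phi}}^{*}\widehat{\nu}=\widehat{\nu}$ for the time-reversed potential $\widehat{\phi}$ on the reversed shift $\Sigma_{A^{\top}}$; since periodic orbits, hence the numbers $Z_n$, are invariant under reversal, $\widehat{\phi}$ is again $1$-transient with summable variation. Repeating the construction of the previous paragraph on $\Sigma_{A^{\top}}$ produces $\widehat{\nu}$, from which $h$ is read off; the subtlety is that $\Sigma_{A^{\top}}$ need not be locally compact, so one must check by hand that the series still yields a bona fide $\sigma$-finite measure there --- this is where the summable-variation bounds, rather than local compactness, do the work. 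Continuity and positivity of $h$ come from summable variation and transitivity. Finally, $h\,\d\nu$ is $\sigma$-invariant by the standard identity: for $f\ge 0$, $L_\phi\big((f\circ\sigma)\,h\big)=f\cdot L_\phi h=f\cdot h$, so $\int (f\circ\sigma)\,h\,\d\nu=\int L_\phi\big((f\circ\sigma)h\big)\d\nu=\int f\,h\,\d\nu$ using $L_\phi^{*}\nu=\nu$; and $\int h\,\d\nu<\infty$ because the construction dominates the relevant double series by a multiple of $\big(\sum_n Z_n(\phi,a)\big)^{2}<\infty$, after which one rescales $h$.

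I expect the main obstacle to be producing the eigenfunction with the \emph{exact} eigenvalue $\lambda$. The obvious candidates fail: the Green function $\sum_{n\ge 0}L_\phi^n\mathbbm{1}_{[a]}$ is only super-invariant ($L_\phi$ of it equals itself minus $\mathbbm{1}_{[a]}$), while normalized iterates $L_\phi^n\mathbbm{1}_{[a]}/L_\phi^n\mathbbm{1}_{[a]}(x^{*})$ converge (by bounded distortion and Arzel\`a--Ascoli on compacts) to an eigenfunction for the eigenvalue $e^{P_G(\phi)}$, \emph{not} for $\lambda$. One is therefore pushed into the reversed-shift --- equivalently, Martin-boundary-type --- construction above, whose delicate point is exactly controlling it without local compactness; checking $\int h\,\d\nu<\infty$ is the second technical hurdle.
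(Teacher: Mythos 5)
First, note that the paper does not prove this statement at all: Theorem \ref{RPFt} is quoted verbatim from Cyr and Shwartz, so the relevant benchmark is their arguments, which construct $\nu$ and $h$ by boundary-limit (Martin-boundary / Patterson--Sullivan-type) procedures. Measured against that, your sketch has a genuine gap at its very center, namely the construction of the eigenmeasure. The object you define --- a seed on $[a]$ fattened by excursion sums, ``or equivalently $\nu=\sum_{n\ge 0}(L_\phi^{*})^{n}\mu_a$'' --- is only \emph{excessive}, not conformal: the telescoping you invoke gives $L_\phi^{*}\nu=\lambda\nu-\lambda\mu_a$, so the eigenvalue equation fails by exactly the mass of the seed. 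This is the same ``super-invariance'' defect you correctly diagnose for the Green function on the eigenfunction side, but it kills your eigenmeasure construction as well; no choice of nonzero seed removes it. The actual content of Cyr's and Shwartz's proofs is precisely how to get an \emph{exact} eigenmeasure and eigenfunction for a transient potential: one takes vague limits of normalized superharmonic objects along sequences escaping to infinity (the $\lambda$-Martin boundary), and it is there that local compactness and transience are used, to extract a convergent subsequence and to show the limit is nonzero and finite on cylinders. Your proposal never supplies this limiting step for $\nu$, and for $h$ it only names it (``Martin-boundary-type'') while explicitly leaving the delicate point open.

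Two further steps are asserted rather than proved. The claim that transience forces $\sum_{n\ge1}\lambda^{-n}Z_n^{*}(\phi,a)<1$ does not follow from summable variation: the renewal identity between $Z_n$ and $Z_n^{*}$ is exact only for Markov (two-coordinate) potentials, and bounded distortion yields only $\sum_n \lambda^{-n}Z_n^{*}<e^{V}$ with $V=\sum_{k\ge2}\Var_k\phi$, so the strict sub-stochasticity you call ``the analytic content of transience'' is unjustified (the correct invariant is the pressure/convergence radius of the induced loop system, not the naive loop sum). Likewise, the time-reversal reduction for $h$ is under-specified: a potential depending on all future coordinates has no canonical version on the reversed one-sided shift (one needs a Sinai-type cohomology or a reduction to a countable nonnegative matrix up to distortion), the transposed graph can fail to be row-finite, and the finiteness of $\int h\,\d\nu$ is dispatched by an unproved domination by $\bigl(\sum_n Z_n(\phi,a)\bigr)^{2}$. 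So while the outline correctly identifies the two hard points (exact eigenvalue for $h$, and convergence issues without local compactness), the proposal as written does not contain the construction that makes the theorem true.
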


For each $n\in\bbn$, let $\sigma^{-n}\calb$ denote the smallest $\sigma$-algebra in which all the coordinate functions $\pi_k:\Sigma_{A}\to S$ given by $\pi_k(x)=x_k$, with $k\geq n$, are measurable. Thus, we have the following family of $\sigma$-algebras
$$
\calb\supset \sigma^{-1}\calb \supset \sigma^{-2}\calb\supset \ldots \supset \sigma^{-n}\calb\supset\ldots
$$

\begin{defi}\label{def:dlrprob}
Let $\Sigma_{A}$ be a topological Markov shift, $\nu$ a probability measure in $\calb$ and $\phi:\Sigma_{A}\to \bbr$ a measurable potential. We say that $\nu$ is \emph{$\phi$-DLR} if for every $n\ge 1$ and for every cylinder $[\underline{a}]$ of length $n$, we have 
\begin{equation}\label{dlrprob}
	\mathbb{E}_{\nu}\big(\mathbbm{1}_{[\underline{a}]}|\sigma^{-n}\mathcal{B}\big)(x)=\frac{e^{ \phi_n(\underline{a}\sigma^nx)}\mathbbm{1}_{\{\underline{a}\sigma^nx\in \Sigma_{A} \}}}{\displaystyle\sum_{\sigma^n y=\sigma^n x}e^ {\phi_n(y)}},\qquad \nu\text{-a.e.}
\end{equation}
\end{defi}

Equations (\ref{dlrprob}) are called DLR \emph{equations}, see also \cite{Do1, Do2, Do3, FV, Geo, LaRu, RaSe}. The next result, by Sarig \cite{Sa5}, gives general conditions for Markov shift and potentials such that any conformal probability measure is a DLR measure. The reciprocal is not always true, see example \ref{exem4}.

\begin{teo}[\cite{Sa5}]\label{Sa5}
Let $\Sigma_{A}$ be a topological Markov shift and $\phi:\Sigma_{A}\to\bbr$ a measurable potential. Then any non-singular  $(\lambda,\phi)$-conformal probability measure $\nu$ is a \emph{$\phi$-DLR} measure.
\end{teo}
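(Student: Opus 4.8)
The plan is to show that a $(\lambda,\phi)$-conformal probability measure $\nu$ satisfies the DLR equations \eqref{dlrprob} by computing both sides against an arbitrary bounded $\sigma^{-n}\calb$-measurable test function and comparing. Fix $n\ge 1$ and a cylinder $[\underline{a}]$ of length $n$. A function $g$ is $\sigma^{-n}\calb$-measurable precisely when $g = G\circ\sigma^n$ for some Borel $G:\Sigma_A\to\bbr$; so it suffices to verify, for every bounded Borel $G$,
\begin{equation*}
\int \mathbbm{1}_{[\underline{a}]}(x)\, G(\sigma^n x)\,\d\nu(x) \;=\; \int \frac{e^{\phi_n(\underline{a}\sigma^n x)}\mathbbm{1}_{\{\underline{a}\sigma^n x\in\Sigma_A\}}}{\sum_{\sigma^n y=\sigma^n x} e^{\phi_n(y)}}\, G(\sigma^n x)\,\d\nu(x).
\end{equation*}
The left-hand side is $\int_{[\underline{a}]} (G\circ\sigma^n)\,\d\nu$; the natural move is to push this forward under $\sigma^n$ and use conformality to rewrite $\nu$ restricted to a cylinder in terms of $\nu$ on the base. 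The key identity is that iterating the defining relation $\d\nu/\d(\nu\circledcirc\sigma) = \lambda^{-1}e^{\phi}$ gives, on the cylinder $[\underline{a}]$, a Jacobian for $\sigma^n$ equal to $\lambda^{-n}e^{\phi_n}$, so that for a function supported on $[\underline{a}]$ one has $\int_{[\underline{a}]} F\,\d\nu = \lambda^{-n}\int F(\underline{a}\,z)\, e^{\phi_n(\underline{a}z)}\mathbbm{1}_{\{\underline{a}z\in\Sigma_A\}}\,\d\nu(z)$, where $z$ ranges over $\Sigma_A$ and $\underline{a}z$ denotes concatenation. This is most cleanly obtained from the dual characterization in Proposition \ref{conf-auto}: applying $L_\phi^*\nu = \lambda\nu$ repeatedly yields $\int L_\phi^n f\,\d\nu = \lambda^n\int f\,\d\nu$ for $f\in L^1(\nu)$, and $L_\phi^n f(x) = \sum_{\sigma^n y = x} e^{\phi_n(y)} f(y)$.

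With this in hand, apply the identity $\int L_\phi^n f\,\d\nu = \lambda^n\int f\,\d\nu$ twice: once with $f(y) = \mathbbm{1}_{[\underline{a}]}(y)\,G(\sigma^n y)$, and once with $f(y) = \dfrac{e^{\phi_n(\underline{a}\sigma^n y)}\mathbbm{1}_{\{\underline{a}\sigma^n y\in\Sigma_A\}}}{\sum_{\sigma^n w = \sigma^n y} e^{\phi_n(w)}}\,G(\sigma^n y)$. For the first choice, $L_\phi^n f(x) = \sum_{\sigma^n y = x} e^{\phi_n(y)}\mathbbm{1}_{[\underline{a}]}(y)\,G(\sigma^n y)$; only the term $y = \underline{a}x$ survives (when admissible), giving $e^{\phi_n(\underline{a}x)}\mathbbm{1}_{\{\underline{a}x\in\Sigma_A\}}\,G(x)$. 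For the second choice, note that the integrand of $f$ is constant along fibers of $\sigma^n$ in the relevant way: $L_\phi^n f(x) = \sum_{\sigma^n y = x} e^{\phi_n(y)}\cdot \dfrac{e^{\phi_n(\underline{a}x)}\mathbbm{1}_{\{\underline{a}x\in\Sigma_A\}}}{\sum_{\sigma^n w = x} e^{\phi_n(w)}}\, G(x) = e^{\phi_n(\underline{a}x)}\mathbbm{1}_{\{\underline{a}x\in\Sigma_A\}}\,G(x)$, since the sum over $y$ cancels the denominator. Thus $L_\phi^n$ applied to both choices produces the \emph{same} function, hence $\int f_1\,\d\nu = \lambda^{-n}\int L_\phi^n f_1\,\d\nu = \lambda^{-n}\int L_\phi^n f_2\,\d\nu = \int f_2\,\d\nu$, which is exactly the displayed equality of integrals. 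Since this holds for all bounded Borel $G$, and since $\mathbbm{1}_{[\underline{a}]}$ is $\calb$-measurable while the right-hand side of \eqref{dlrprob} is $\sigma^{-n}\calb$-measurable, the defining property of conditional expectation gives \eqref{dlrprob} $\nu$-a.e.

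A couple of technical points need care. First, one must check the integrability needed to apply Proposition \ref{conf-auto}: since $\nu$ is a probability measure and $G$ is bounded, $f_1\in L^1(\nu)$ is immediate, and the ratio appearing in $f_2$ is bounded by $1$ in absolute value (the numerator is one term of the denominator), so $f_2\in L^1(\nu)$ as well; moreover $L_\phi^n f_1 = L_\phi^n f_2$ is bounded by $\|G\|_\infty\,(L_\phi^n\mathbbm{1})$, which lies in $L^1(\nu)$ because $L_\phi^{*n}\nu = \lambda^n\nu$ and $\nu$ is finite. Second, one should make sure the a.e.\ statement in the definition of $(\phi,\lambda)$-conformal is compatible with the fiberwise manipulations — this is handled by working entirely at the level of integrals via Proposition \ref{conf-auto}, which is the reason for routing the argument through the dual operator rather than through Radon--Nikodym derivatives directly. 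I expect the main obstacle to be purely bookkeeping: correctly identifying the surviving term in $L_\phi^n$ of an indicator of an $n$-cylinder, and verifying that a $\sigma^{-n}\calb$-measurable integrand pulls through $L_\phi^n$ as a constant on each fiber; once those are pinned down, the computation closes.
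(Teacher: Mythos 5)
Your proposal is correct, but it takes a genuinely different route from the one the paper leans on. The paper does not reprove this statement: it quotes Sarig, and its own $\sigma$-finite extension (Theorem \ref{con-dlr}) is proved by first establishing the characterization of DLR measures in Proposition \ref{resu2} --- quasi-invariance under the prefix-exchange maps $v_{\underline{a},\underline{b}}$ with Radon--Nikodym cocycle $e^{\phi_n(\underline{b}x_n^{\infty})-\phi_n(\underline{a}x_n^{\infty})}$, obtained via the martingale convergence theorem --- and then checking that a conformal measure satisfies that condition. You instead verify the DLR equations directly by duality: you test against bounded $\sigma^{-n}\calb$-measurable functions $G\circ\sigma^n$ and apply the iterated identity $\int L_{\phi}^n f\,\d\nu=\lambda^n\int f\,\d\nu$ (Proposition \ref{conf-auto}) to two integrands whose images under $L_{\phi}^n$ coincide $\nu$-a.e. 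This is more self-contained for probability measures: no martingale theorem, no exchange-map characterization, and the integrability bookkeeping is easy because the candidate conditional expectation is bounded by $1$ and $L_{\phi}^n\mathbbm{1}\in L^1(\nu)$. What the paper's route buys is reusability: Proposition \ref{resu2} is exactly the tool needed later for the converse direction (Lemma \ref{composG}, Proposition \ref{Rus1G}, Theorem \ref{por1G}) and for the infinite-measure version, where one must first secure compatibility of $\sigma^{-n}\calb$ before conditional expectations even make sense. One point you should make explicit: the non-singularity hypothesis, which you never invoke, is what guarantees that the denominator $\sum_{\sigma^n y=\sigma^n x}e^{\phi_n(y)}=(L_{\phi}^n\mathbbm{1})(\sigma^n x)$ is finite $\nu$-a.e.; from $L_{\phi}^n\mathbbm{1}\in L^1(\nu)$ you get finiteness $\nu$-a.e., and you need $\nu\circ\sigma^{-n}\ll\nu$ to transport this along $\sigma^n$ so that $f_2$ and the right-hand side of \eqref{dlrprob} are well defined; similarly, the identity $L_{\phi}^n f_1=L_{\phi}^n f_2$ holds only off the $\nu$-null set where $L_{\phi}^n\mathbbm{1}=\infty$, which is all your integral comparison requires.
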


\section{Infinite DLR measures}\label{sec:infinite}

In order to define a $\sigma$-finite DLR measure $\nu$ with $\nu(\Sigma_A)=\infty$, we need that the family of the conditional expectations $\left\{\mathbb{E}_{\nu}[\cdot|\sigma^{-n}\mathcal{B}]\right\}_{n\geq 1}$ should be well-defined, i.e., for each $n\ge 1$, $\nu$ is $\sigma$-finite in the sub-$\sigma$-algebra $\sigma^{-n}\mathcal{B}$.

Let us define  $\mathcal{M}(\Sigma_{A})$ be the set of $\sigma$-finite measures (not necessarily probability measures) on $\Sigma_A$, and $\mathcal{M}_{\sigma}(\Sigma_{A})$ be the set of $\sigma$-invariant $\sigma$-finite measures on $\Sigma_{A}$. We say that a sub-$\sigma$-algebra $\mathcal{F}$ is \emph{compatible} with the measure $\nu\in \mathcal{M}(\Sigma_{A})$ if $\nu$ is $\sigma$-finite in $\mathcal{F}$.

\begin{defi}\label{def:dlrinf}
Let $\Sigma_{A}$ be a topological Markov shift, $\nu$ a $\sigma$-finite measure in $\calb$ and $\phi:\Sigma_{A}\to \bbr$ a measurable potential. We say that $\nu$ is \emph{$\phi$-DLR} if, for every $n\ge 1$,
\begin{itemize}
\item[i)] the sub-$\s$-algebra $\sigma^{-n}\mathcal{B}$ is compatible with the measure $\nu$,
\item[ii)] for every cylinder $[\underline{a}]$ of length $n$, we have 
\begin{equation}\label{dlrinf}
	\mathbb{E}_{\nu}\big(\mathbbm{1}_{[\underline{a}]}|\sigma^{-n}\mathcal{B}\big)(x)=\frac{e^{ \phi_n(\underline{a}\sigma^nx)}\mathbbm{1}_{\{\underline{a}\sigma^nx\in \Sigma_{A} \}}}{\displaystyle\sum_{\sigma^n y=\sigma^n x}e^ {\phi_n(y)}},\qquad \nu\text{-a.e.}
\end{equation}
\end{itemize} 
\end{defi}

Note that when $\nu$ is a probability measure, Definition \ref{def:dlrinf} coincides with Definition \ref{def:dlrprob}. The following proposition shows a class of measures satisfying item $i)$ of the previous definition.

\begin{prop}\label{fo1} 
Consider $\Sigma_{A}$ be a topological Markov shift, $\phi:\Sigma_{A}\to \bbr$ a measurable potential and $\nu$ a $(\phi,\lambda)$-conformal, for some $\lambda>0$, such that $\nu\left([a]\right)<\infty$ for every $a\in S$.  If $\|L_{\phi}\mathbbm{1}\|_{\infty}<\infty$, then $\nu\left(\pi^{-1}_n\{a\}\right)<\infty$ for every $n\ge 1$ and $a\in S$. In particular, $\s^{-n}\calb$ is compatible with the measure~$\nu$ for each $n\geq 1$.
\end{prop}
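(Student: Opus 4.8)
The plan is to show directly that $\nu(\pi_n^{-1}\{a\})<\infty$ by expressing the measure of the preimage cylinder set as an integral against $\nu$ of the iterated Ruelle operator applied to the indicator of $[a]$, and then bounding that iterated operator using the hypothesis $\|L_\phi\mathbbm{1}\|_\infty<\infty$. First I would recall that $\pi_n^{-1}\{a\}=\bigsqcup_{\underline b}[\underline b\, a]$, where the union is over admissible words $\underline b=(b_0,\dots,b_{n-1})$ of length $n$ with $A(b_{n-1},a)=1$; equivalently, $\mathbbm{1}_{\pi_n^{-1}\{a\}}=\mathbbm{1}_{[a]}\circ\sigma^n$. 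The key identity is that for the $(\phi,\lambda)$-conformal measure $\nu$ one has, by iterating Proposition \ref{conf-auto} (i.e.\ $L_\phi^*\nu=\lambda\nu$, hence $(L_\phi^n)^*\nu=\lambda^n\nu$),
\[
\nu\big(\pi_n^{-1}\{a\}\big)=\int \mathbbm{1}_{[a]}(\sigma^n x)\,\d\nu(x)=\lambda^{-n}\int L_\phi^n\big(\mathbbm{1}_{[a]}\circ\sigma^n\big)(x)\,\d\nu(x).
\]
Now $L_\phi^n(g\circ\sigma^n\cdot f)=g\cdot L_\phi^n f$ by the standard ``pulling out'' property of the Ruelle operator, so with $f=\mathbbm{1}$ we get $L_\phi^n(\mathbbm{1}_{[a]}\circ\sigma^n)=\mathbbm{1}_{[a]}\cdot L_\phi^n\mathbbm{1}$.

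Next I would control $L_\phi^n\mathbbm{1}$ pointwise. From $\|L_\phi\mathbbm{1}\|_\infty=:M<\infty$ and the fact that $L_\phi$ is a positive operator, a simple induction gives $L_\phi^n\mathbbm{1}=L_\phi^{n-1}(L_\phi\mathbbm{1})\le L_\phi^{n-1}(M\cdot\mathbbm{1})=M\,L_\phi^{n-1}\mathbbm{1}$, whence $\|L_\phi^n\mathbbm{1}\|_\infty\le M^n$. Substituting back,
\[
\nu\big(\pi_n^{-1}\{a\}\big)=\lambda^{-n}\int_{[a]} L_\phi^n\mathbbm{1}\,\d\nu \le \lambda^{-n} M^n\,\nu([a])<\infty,
\]
using the hypothesis $\nu([a])<\infty$. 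This proves the first assertion for every $n\ge1$ and $a\in S$.

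For the ``in particular'' clause: a sub-$\sigma$-algebra is compatible with $\nu$ precisely when $\nu$ restricted to it is $\sigma$-finite, i.e.\ when $\Sigma_A$ can be exhausted by countably many sets in $\sigma^{-n}\calb$ of finite $\nu$-measure. Since $\sigma^{-n}\calb$ is generated by the coordinate maps $\pi_k$, $k\ge n$, the sets $\pi_n^{-1}\{a\}$, $a\in S$, lie in $\sigma^{-n}\calb$, and $\Sigma_A=\bigcup_{a\in S}\pi_n^{-1}\{a\}$ is a countable cover by sets of finite $\nu$-measure by the previous paragraph; hence $\nu$ is $\sigma$-finite on $\sigma^{-n}\calb$, as required.

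The only genuinely delicate points are (a) justifying the interchange $\int \mathbbm{1}_{[a]}\circ\sigma^n\,\d\nu=\lambda^{-n}\int L_\phi^n(\mathbbm{1}_{[a]}\circ\sigma^n)\,\d\nu$, which requires knowing that the relevant functions lie in $L^1(\nu)$ so that Proposition \ref{conf-auto} (iterated) applies — here one should either argue by monotone approximation using simple functions supported on finitely many cylinders, or first establish the bound $L_\phi^n(\mathbbm{1}_{[a]}\circ\sigma^n)\le M^n\mathbbm{1}_{[a]}\in L^1(\nu)$ and then invoke the conformality relation; and (b) checking the ``pulling out'' identity $L_\phi^n(g\circ\sigma^n\cdot f)=g\cdot L_\phi^n f$, which is routine from the definition \eqref{op-Ruelle} since $y$ with $\sigma^n y=x$ satisfies $\sigma^n y=x$ so $g(\sigma^n y)=g(x)$. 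I expect (a) to be the main obstacle worth a careful sentence, since $\nu$ is only $\sigma$-finite, not finite; everything else is a short computation.
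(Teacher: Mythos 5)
Your proposal is correct and follows essentially the same route as the paper: the identity $\nu\big(\pi_n^{-1}\{a\}\big)=\lambda^{-n}\int_{[a]}L_\phi^n\mathbbm{1}\,\d\nu$ together with the bound $\|L_\phi^n\mathbbm{1}\|_\infty\le\|L_\phi\mathbbm{1}\|_\infty^n$ and $\nu([a])<\infty$ is exactly the paper's computation. The delicate point (a) you flag is handled in the paper precisely by your suggested fix: decomposing $\pi_n^{-1}\{a\}$ into the cylinders $[w_0,\dots,w_{n-1},a]$ (each of finite measure, so conformality applies to each) and summing via the Monotone Convergence Theorem.
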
  

\begin{proof} 
For a fixed $n\ge 1$ and $a\in S$, let $(w_0,w_1,\ldots, w_{n-1})$ be a word of length $n$ such that $A(w_{n-1},a)=1$. Thus
\begin{eqnarray}\label{ojo}
	\lambda^n \nu\big([w_0,w_1,\ldots,w_{n-1},a]\big)=\int_{[a]}e^{\phi_n(w_0,w_1,\ldots,w_{n-1},x)}\d\nu(x).
\end{eqnarray}
Note that
	\begin{equation*}
 \pi^{-1}_n\{a\}=\bigcup_{\begin{subarray}{c}
		[w_0,w_1,\ldots,w_{n-1}]\neq\emptyset \\A(w_{n-1},a)=1\end{subarray}}[w_0,w_1,\ldots,w_{n-1},a].
	\end{equation*}
Take the sum in (\ref{ojo}) over all cylinder $[w_0,w_1,\ldots, w_{n-1}]$ such that $A(w_{n-1},a)=1$. By Monotone Convergence Theorem,
	\begin{eqnarray*}
		\nu\big(\pi_{n}^{-1}\{a\}\big)&=&\lambda^{-n}\sum_{\begin{subarray}{c}
				[w_0,w_1,\ldots,w_{n-1}]\neq\emptyset \\A(w_{n-1},a)=1\end{subarray}}\int_{[a]}e^{\phi_n(w_0,w_1,\ldots,w_{n-1},x)}\d\nu(x)\\
		&=&\lambda^{-n}\int_{[a]}L_{\phi}^n\mathbbm{1}d\nu\\
		&\leq&\lambda^{-n}\nu\left([a]\right)\|L_{\phi}\mathbbm{1}\|_{\infty}^{n},
	\end{eqnarray*}
	which is finite since $\|L_{\phi}\mathbbm{1}\|_{\infty}<\infty$.
\end{proof}

In the next example, we show that the condition $\|L_{\phi}\mathbbm{1}\|_{\infty}<\infty$ does not imply that the conformal measure is finite.
%A seguir mostraremos que para potenciais $\phi:\Sigma_{A}\rightarrow\mathbb{R}$ tal que $\|L_{\phi}\mathbbm{1}\|_{\infty}<\infty$ existem automedidas $\sigma-$finitas tal que $\nu\big( \Sigma_{A} \big)=+\infty$.

\begin{exem}\label{exem1} Consider the renewal shift and a potential $\phi:\Sigma_{A}\to \mathbb{R}$ given by  $\phi(x)={x_0-x_1}$. Note that $\phi$ satisfies the Walters condition, $\|L_{\phi}\mathbbm{1}\|_{\infty}<\infty$, and $P_G(\phi)=\log2$. Let $\lambda:=e^{P_G(\phi)}$. The expression $Z_n(\phi,1)=2^{n-1}$ implies that $\phi$ is recurrent. By Generalized Ruelle-Perron-Frobenius Theorem, there exists a positive measure $\nu$ finite in cylinders such that 
\be\label{exemplodual}
\int_{\Sigma_{A}} L_{\phi}f\d\nu=\lambda\int_{\Sigma_{A}} f\d\nu
\ee	
for every $f\in L^1(\nu)$. For each $a\geq 2$, consider the function $f = \mathbbm{1}_{[a]}$. Substituting in Equation (\ref{exemplodual}), we have $\nu\left([a]\right)=\frac{e}{2}\nu\left([a-1]\right)$. Therefore $\nu\left(\Sigma_{A}\right)=+\infty$.
\end{exem}

\begin{note}\label{Remark1} O. Sarig \cite{Sa3} showed that a weakly H\"older continuous potential $\phi$ defined in the renewal shift has good behavior with respect to the phase transition in the recurrence modes, that is, there exists $\beta_c\in(0,\infty]$ such that $\beta \phi$ is positive recurrent for $\beta<\beta_c$, and transient for $\beta>\beta_c$. For every $\beta>0$, consider the family of potentials $\{\beta\phi\}_{\beta>0}$ where $\phi$ is the potential from Example \ref{exem1}, a direct calculation shows that $\beta\phi$ is positive recurrent for all $\beta>0$. Note that $\nu_{\beta}(\Sigma_{A})$ is finite for $\beta<\log2$ and infinite for $\beta>\log2$, where $\nu_{\beta}$ be the eigenmeasure associate to the potential $\beta\phi$. Then, there is no phase transition in the recurrence mode in this case, but there is a phase transition in the sense of the conformal measure's finiteness. We will prove that the volume-type phase transition on renewal shifts for weakly H\"older continous potentials has also a good behavior in Section \ref{sec:phase-transition}.
\end{note}

The following corollary is an extension of a result proved by V. Cyr in \cite{Cyr1} for the positive recurrent case. Now, since we have a definition of an infinite volume DLR measure, we are able to deal with the null recurrent case when $h\d\nu$ is infinite.

\begin{coro}\label{coro-Cyr}
Let $\Sigma_{A}$ be a topologically mixing Markov shift, $\phi:\Sigma_{A}\to \bbr$ a potential sa-tisfying the Walters condition and $P_G(\phi)<\infty$. If $\phi$ is a recurrent potential then $h\d\nu$ is a  $\left(\phi+ \log{h}-\log{h\circ\sigma}-P_G(\phi)\right)$-\emph{DLR} measure, where $h$ is a positive continuous function and $\nu$ is a measure such that $L_{\phi}h=e^{P_G(\phi)}h$ and $L^{*}_{\phi}\nu=e^{P_G(\phi)}\nu$.
\end{coro}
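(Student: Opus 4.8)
The plan is to realize $m:=h\,\d\nu$ as a $(\psi,1)$-conformal measure for the normalized cocycle $\psi:=\phi+\log h-\log h\circ\sigma-P_G(\phi)$, and then read the DLR equations directly off the transfer-operator identities. Write $\lambda:=e^{P_G(\phi)}$; by Theorem \ref{teo-RPF} (a recurrent potential falls under item i) or ii)) the objects $h,\nu$ exist, with $L_{\phi}h=\lambda h$, $L_{\phi}^{*}\nu=\lambda\nu$, and $\nu$ conservative, non-singular and finite on cylinders. Note that $\psi$ is cohomologous to $\phi-P_G(\phi)$ via $-\log h$, and a one-line computation from (\ref{op-Ruelle}) gives $L_{\psi}f=\lambda^{-1}h^{-1}L_{\phi}(hf)$ for all $f\ge 0$; in particular $L_{\psi}\mathbbm 1=\lambda^{-1}h^{-1}L_{\phi}h=\mathbbm 1$, hence $L_{\psi}^{n}\mathbbm 1=\mathbbm 1$ for every $n\ge 1$. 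Telescoping yields $\psi_n=\phi_n+\log h-\log h\circ\sigma^{n}-nP_G(\phi)$, so that $e^{\psi_n(\underline a\sigma^n x)}=\lambda^{-n}e^{\phi_n(\underline a\sigma^n x)}\,h(\underline a\sigma^n x)/h(\sigma^n x)$ whenever $\underline a\sigma^n x\in\Sigma_A$.

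The first real step is to check that $m$ is $(\psi,1)$-conformal. For $f\in L^1(m)$ we have $hf\in L^1(\nu)$ and, since $(L_{\psi}f)\,h=\lambda^{-1}L_{\phi}(hf)$, Proposition \ref{conf-auto} applied to $\nu$ gives
\[
\int L_{\psi}f\,\d m=\lambda^{-1}\int L_{\phi}(hf)\,\d\nu=\lambda^{-1}\cdot\lambda\int hf\,\d\nu=\int f\,\d m,
\]
so Proposition \ref{conf-auto}, now applied to $m$, $\psi$, $\lambda=1$, shows $m$ is $(\psi,1)$-conformal; as $h>0$ and $\nu$ is non-singular, $m\sim\nu$ and $m$ is non-singular too. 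Moreover $m$ is $\sigma$-finite: it is finite on each set $[a]\cap\{h\le k\}$ (of mass $\le k\,\nu([a])$), and these cover $\Sigma_A$.

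Next, compatibility (item i) of Definition \ref{def:dlrinf}): applying the identity $\int F\,\d\nu=\lambda^{-n}\int L_{\phi}^{n}F\,\d\nu$ (valid for $F\ge 0$ by monotone approximation, since $L_{\phi}$ is a countable sum of nonnegative terms) to $F=h\cdot\mathbbm 1_{\sigma^{-n}([a]\cap\{h\le k\})}$ and then using $L_{\phi}^{n}h=\lambda^{n}h$ gives $m\big(\sigma^{-n}([a]\cap\{h\le k\})\big)=\int_{[a]\cap\{h\le k\}}h\,\d\nu\le k\,\nu([a])<\infty$; since these sets lie in $\sigma^{-n}\calb$ and cover $\Sigma_A$, $\sigma^{-n}\calb$ is compatible with $m$. (If one invokes the standard fact that the eigenfunction $h$ of a Walters potential is bounded on cylinders, then $m$ is finite on cylinders and this is immediate from Proposition \ref{fo1} applied to $(m,\psi,1)$, using $\|L_{\psi}\mathbbm 1\|_{\infty}=1$.) For the DLR equations (item ii)): by $L_{\psi}^{n}\mathbbm 1=\mathbbm 1$ the denominator in (\ref{dlrinf}) equals $1$, so its right-hand side is just $g(x):=e^{\psi_n(\underline a\sigma^n x)}\mathbbm 1_{\{\underline a\sigma^n x\in\Sigma_A\}}$, which is $\sigma^{-n}\calb$-measurable. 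To identify $g$ with $\mathbb{E}_m(\mathbbm 1_{[\underline a]}\mid\sigma^{-n}\calb)$ it suffices to verify $\int_B\mathbbm 1_{[\underline a]}\,\d m=\int_B g\,\d m$ for $B$ in the $\pi$-system $\{\sigma^{-n}([\underline c]\cap\{h\le k\})\}$, which generates $\sigma^{-n}\calb$ and carries finite $m$-mass on a countable cover of $\Sigma_A$; for such $B$, keeping only the unique inverse branch $y=\underline a\sigma^n x$ of $\sigma^n$ over $\sigma^n x$ and substituting the identities of the first paragraph together with $L_{\phi}^{n}h=\lambda^{n}h$, both sides collapse to $\lambda^{-n}\int\mathbbm 1_{\{\underline a z\in\Sigma_A\}}e^{\phi_n(\underline a z)}h(\underline a z)\,\mathbbm 1_{[\underline c]\cap\{h\le k\}}(z)\,\d\nu(z)$. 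Hence (\ref{dlrinf}) holds and $m=h\,\d\nu$ is $\psi$-DLR.

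The only genuine obstacle is the interplay between the \emph{possibly infinite} measure $m$ — the null-recurrent case, where $\int h\,\d\nu=\infty$ — and the filtration $\{\sigma^{-n}\calb\}$: one has to make sure $m$ remains $\sigma$-finite on each $\sigma^{-n}\calb$ (this is what forces the $\{h\le k\}$-truncations into the argument), so that $\mathbb{E}_m(\mathbbm 1_{[\underline a]}\mid\sigma^{-n}\calb)$ is well defined, and then that it can be pinned down by testing only against a $\pi$-system of finite-$m$-mass sets rather than against all of $\sigma^{-n}\calb$. One also needs to push the duality $L_{\phi}^{*}\nu=\lambda\nu$ from $L^1(\nu)$ to all nonnegative measurable functions, which is a routine monotone-convergence argument. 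Everything else is the cohomology bookkeeping that turns $L_{\phi}h=\lambda h$ into the normalization $L_{\psi}\mathbbm 1=\mathbbm 1$.
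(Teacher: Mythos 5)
Your proof is correct and follows essentially the same route as the paper: the paper's own argument consists precisely of your compatibility step (finiteness of $h\d\nu$ on the sets $\pi_n^{-1}\{a\}$, hence $\sigma$-finiteness on each $\sigma^{-n}\calb$) followed by the normalized-potential/conformality-implies-DLR verification, which the paper outsources to Cyr (and which mirrors its own proof of Proposition \ref{prop-dlrinv} via Theorem \ref{con-dlr}) and which you simply carry out explicitly. Your $\{h\le k\}$-truncations are only a minor self-contained refinement that avoids invoking boundedness of the eigenfunction $h$ on cylinders.
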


\begin{proof}  Note that the hypotheses of the corollary imply that $h \d\nu\left(\pi^{-1}_n\{a\}\right)<\infty$ for every $n \geq 1$ and then the sub-$\s$-algebra $\sigma^{-n}\mathcal{B}$ is compatible with the measure $h \d\nu$. The rest of the proof follows as in \cite{Cyr1}. 
\end{proof}

We remember that we can define the conditional expectation for $\sigma$-finite measures, in particular, we have the Martingale Convergence Theorem for $\sigma$-finite measures.

\begin{teo}\label{TMG}
Let $(\Omega,\mathcal{B},\mu)$ be a measure space, and $\{\mathcal{F}_i\}_{i\ge 1}$ a family of sub-$\sigma$-algebras of $\mathcal{B}$, each one compatible with the measure $\mu$, satisfying $\mathcal{F}_i\subseteq \mathcal{F}_{i+1}$ for every $i\ge 1$. Consider the $\sigma$-algebra $\mathcal{F}:=\sigma\left(\cup_{n\geq 1}\mathcal{F}_n\right)$. If $f\in L^1\left(\Omega,\mathcal{B},\mu\right)$ and $\mathcal{F}$ is compatible with the measure $\mu$, then
	\begin{equation*}
	\lim_{n\to \infty}\mathbb{E}[f\mid\mathcal{F}_n]=\mathbb{E}[f\mid\mathcal{F}]
	\end{equation*}
$\mu$-a.e. and in $L^1(\mu)$.
\end{teo}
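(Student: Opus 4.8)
The plan is to reduce the $\sigma$-finite statement to the classical ($L^1$, probability-space) Martingale Convergence Theorem by a change of measure. First I would exploit compatibility: since $\mathcal{F}:=\sigma(\cup_n\mathcal{F}_n)$ is compatible with $\mu$, there is an $\mathcal{F}$-measurable, strictly positive function $w$ with $\int w\,d\mu=1$; indeed, writing $\Omega$ as a countable disjoint union $\bigcup_k E_k$ of $\mathcal{F}$-sets of finite $\mu$-measure, take $w=\sum_k 2^{-k}(1+\mu(E_k))^{-1}\mathbbm{1}_{E_k}$. Let $\P$ be the probability measure $d\P=w\,d\mu$. Because $w$ is already $\mathcal{F}_n$-measurable (it is a countable combination of indicators of $\mathcal{F}\supseteq\mathcal{F}_n$... here one must be slightly careful: $w$ as defined is $\mathcal{F}$-measurable but not necessarily $\mathcal{F}_n$-measurable, so I would instead choose the $E_k$ so that the construction respects the filtration, or argue via densities directly). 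The cleanest route is the following: for each $n$, on $(\Omega,\mathcal{F}_n,\mu)$ the measure $\mu$ is $\sigma$-finite, so conditional expectations $\mathbb{E}_\mu[\cdot\mid\mathcal{F}_n]$ are well defined on $L^1(\mu)$, and likewise $\mathbb{E}_\mu[\cdot\mid\mathcal{F}]$; the object of the theorem makes sense.

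The key algebraic step is the abstract Bayes/change-of-measure formula: for $f\in L^1(\mu)$ and any sub-$\sigma$-algebra $\mathcal{G}$ compatible with $\mu$ and with $w$ being $\mathcal{G}$-measurable-up-to-the-needed-extent, one has
\begin{equation*}
\mathbb{E}_{\P}\!\left[\tfrac{f}{w}\,\Big|\,\mathcal{G}\right]=\frac{\mathbb{E}_\mu[f\mid\mathcal{G}]}{w}\cdot w = \mathbb{E}_\mu[f\mid \mathcal{G}] \quad\text{in the appropriate sense,}
\end{equation*}
so that $\mathbb{E}_\mu[f\mid\mathcal{F}_n]=\mathbb{E}_\P[f/w\mid\mathcal{F}_n]$ and $\mathbb{E}_\mu[f\mid\mathcal{F}]=\mathbb{E}_\P[f/w\mid\mathcal{F}]$, once $w$ is arranged to be measurable with respect to $\mathcal{F}_1$ (hence every $\mathcal{F}_n$ and $\mathcal{F}$). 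To arrange this, I would first replace the given filtration by noting that $\mathcal{F}_1$ itself is compatible with $\mu$, pick a strictly positive $\mathcal{F}_1$-measurable $w$ with $\int w\,d\mu=1$ using a countable $\mathcal{F}_1$-partition into finite-measure pieces, and observe $w$ is then $\mathcal{F}_n$-measurable for all $n$ and $\mathcal{F}$-measurable. Then $f/w\in L^1(\P)$ because $\int |f/w|\,d\P=\int|f|\,d\mu<\infty$. Now apply the classical Doob–Lévy Martingale Convergence Theorem on the probability space $(\Omega,\mathcal{F},\P)$ to the integrable function $f/w$: $\mathbb{E}_\P[f/w\mid\mathcal{F}_n]\to\mathbb{E}_\P[f/w\mid\mathcal{F}]$ $\P$-a.e.\ and in $L^1(\P)$. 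Translating back through the identities above, and using that $\P\sim\mu$ (so $\P$-a.e.\ equals $\mu$-a.e.) and that $\int|g|\,d\P=\int|gw|\,d\mu$ converts $L^1(\P)$-convergence of $f/w-\mathbb{E}_\P[f/w\mid\mathcal{F}_n]$ into $L^1(\mu)$-convergence of $f-\mathbb{E}_\mu[f\mid\mathcal{F}_n]$, yields the claim.

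The step I expect to be the main obstacle is the Bayes identity $\mathbb{E}_\mu[f\mid\mathcal{G}]=\mathbb{E}_\P[f/w\mid\mathcal{G}]$ in the $\sigma$-finite setting: one must verify the defining property, namely that for every $\mathcal{G}$-measurable bounded $g$, $\int g\,\mathbb{E}_\mu[f\mid\mathcal{G}]\,d\mu=\int g f\,d\mu$ while $\int g\,\mathbb{E}_\P[f/w\mid\mathcal{G}]\,d\mu=\int g\,\mathbb{E}_\P[f/w\mid\mathcal{G}]\,w^{-1}\,d\P$, and these match because $w$ is $\mathcal{G}$-measurable so $gw^{-1}$ is a legitimate (though possibly unbounded) $\mathcal{G}$-measurable test function — here one restricts to $g$ supported on finite-measure $\mathcal{G}$-sets, where everything is integrable, and extends by the $\sigma$-finiteness exhaustion. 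Once the measurability of $w$ with respect to $\mathcal{F}_1$ is secured, the rest is bookkeeping; the genuinely new content over the classical theorem is precisely the insistence that each $\mathcal{F}_n$ and $\mathcal{F}$ be compatible with $\mu$, which is exactly what makes all the conditional expectations and the normalizing density $w$ exist.
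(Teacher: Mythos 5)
Your reduction to the classical Doob--L\'evy theorem via a change of density is a viable strategy and is genuinely different from the paper's proof, which adapts the standard increasing-martingale argument (Theorem 5.5 of Einsiedler--Ward) directly to the $\sigma$-finite setting, using an approximation theorem from Kingman--Taylor; your route buys the classical theorem as a black box at the cost of change-of-measure bookkeeping. However, the central identity you display is wrong as stated: with $d\mathbb{P}=w\,d\mu$ and $w$ strictly positive and $\mathcal{G}$-measurable, the correct relation is $\mathbb{E}_\mu[f\mid\mathcal{G}]=w\,\mathbb{E}_{\mathbb{P}}[f/w\mid\mathcal{G}]$ $\mu$-a.e., not $\mathbb{E}_\mu[f\mid\mathcal{G}]=\mathbb{E}_{\mathbb{P}}[f/w\mid\mathcal{G}]$. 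Indeed, for $G\in\mathcal{G}$ one has $\int_G \mathbb{E}_{\mathbb{P}}[f/w\mid\mathcal{G}]\,d\mathbb{P}=\int_G (f/w)\,d\mathbb{P}=\int_G f\,d\mu$, and since $d\mathbb{P}=w\,d\mu$ the left-hand side equals $\int_G w\,\mathbb{E}_{\mathbb{P}}[f/w\mid\mathcal{G}]\,d\mu$; comparing with the defining property $\int_G \mathbb{E}_\mu[f\mid\mathcal{G}]\,d\mu=\int_G f\,d\mu$ and using $\sigma$-finiteness of $\mu$ on $\mathcal{G}$ forces the extra factor $w$. Your own proposed test-function check exposes the same discrepancy: $\int g\,\mathbb{E}_{\mathbb{P}}[f/w\mid\mathcal{G}]\,d\mu=\int g\,(f/w)\,d\mu$, which differs from $\int g f\,d\mu$ in general (take $w$ a non-unit constant on a finite measure space: then $\mathbb{E}_{\mathbb{P}}[f/w\mid\mathcal{G}]=f/w\neq f=\mathbb{E}_\mu[f\mid\mathcal{G}]$ for $\mathcal{G}=\mathcal{B}$). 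Consequently the asserted equalities $\mathbb{E}_\mu[f\mid\mathcal{F}_n]=\mathbb{E}_{\mathbb{P}}[f/w\mid\mathcal{F}_n]$ and $\mathbb{E}_\mu[f\mid\mathcal{F}]=\mathbb{E}_{\mathbb{P}}[f/w\mid\mathcal{F}]$ are false as written.

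The slip is repairable and does not destroy the plan. Choose $w$ strictly positive, $\mathcal{F}_1$-measurable, with $\int w\,d\mu=1$ (possible since $\mathcal{F}_1$ is compatible with $\mu$; note that compatibility of $\mathcal{F}_1$ already implies compatibility of every $\mathcal{F}_n$ and of $\mathcal{F}$); then $w$ is $\mathcal{F}_n$- and $\mathcal{F}$-measurable and the corrected identities read $\mathbb{E}_\mu[f\mid\mathcal{F}_n]=w\,\mathbb{E}_{\mathbb{P}}[f/w\mid\mathcal{F}_n]$ and $\mathbb{E}_\mu[f\mid\mathcal{F}]=w\,\mathbb{E}_{\mathbb{P}}[f/w\mid\mathcal{F}]$. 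L\'evy's upward theorem on $(\Omega,\mathcal{B},\mathbb{P})$ applied to $f/w\in L^1(\mathbb{P})$ gives $\mathbb{P}$-a.e.\ (hence $\mu$-a.e., since $\mathbb{P}\sim\mu$) convergence, which survives multiplication by the fixed finite positive function $w$; for the $L^1$ statement, $\int\lvert \mathbb{E}_\mu[f\mid\mathcal{F}_n]-\mathbb{E}_\mu[f\mid\mathcal{F}]\rvert\,d\mu=\int\lvert \mathbb{E}_{\mathbb{P}}[f/w\mid\mathcal{F}_n]-\mathbb{E}_{\mathbb{P}}[f/w\mid\mathcal{F}]\rvert\,d\mathbb{P}\to 0$. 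With the factor of $w$ restored throughout (including in your final ``translation'' step, which implicitly already uses it), your argument is complete and gives an alternative, self-contained proof of the theorem.
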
	

\begin{proof}
The proof is adapted as in Theorem 5.5 of \cite{EiWa}, using Approximation Theorem (see Theorem 4.4 in \cite{Ta}).
\end{proof}

Proposition \ref{resu2} states a characterization of the DLR measures, which is analogous to the pro\-ba\-bility measure's case given by Sarig in \cite{Sa5}. The proof is an adaptation of the Proposition $2.1$ in \cite{Sa5} using Theorem~\ref{TMG}.

\begin{prop}\label{resu2}
Let $\Sigma_{A}$ be a topological Markov shift, $\phi:\Sigma_{A}\to \bbr$ a measurable potential and $\nu$ a measure such that $\nu\left(\pi^{-1}_n\{s\}\right)<\infty$ for every $n\geq0$ and $s\in S$. Then $\nu$ is $\phi$-\emph{DLR} measure if, and only if, for every pair of cylinders $[\underline{a}]=[a_0,a_1,\ldots,a_{n-1}]$ and $[\underline{b}]=[b_0,b_1,\ldots,b_{n-1}]$ of length $n\in\bbn$ such that $a_{n-1}=b_{n-1}$ and $\nu\left([\underline{a}]\right)>0$, the map $v_{\underline{a},\underline{b}}:[\underline{a}]\rightarrow[\underline{b}]$ given by $v_{\underline{a},\underline{b}}(\underline{a}x_n^{\infty})=(\underline{b}x_n^{\infty})$ satisfies
	\begin{equation}\label{vab}
	\frac{\d\nu\circ v_{\underline{a},\underline{b}}}{\d\nu}=e^{\phi_n(\underline{b}x_n^{\infty})-\phi_n(\underline{a}x_n^{\infty})},\quad\nu\text{-a.e.}\mbox{ in }[\underline{a}].
	\end{equation}	
\end{prop}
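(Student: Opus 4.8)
The plan is to follow the same two-direction strategy as Sarig's Proposition 2.1 in \cite{Sa5}, replacing the Martingale Convergence Theorem for probability measures with Theorem \ref{TMG} and being careful that all conditional expectations appearing are well-defined because $\nu(\pi_n^{-1}\{s\})<\infty$ for every $n$ and $s$ — which is precisely item i) of Definition \ref{def:dlrinf}. First I would prove the forward implication: assume $\nu$ is $\phi$-DLR. Fix cylinders $[\underline a]=[a_0,\dots,a_{n-1}]$ and $[\underline b]=[b_0,\dots,b_{n-1}]$ of the same length with $a_{n-1}=b_{n-1}$ and $\nu([\underline a])>0$, so that the map $v_{\underline a,\underline b}$ is a well-defined bijection onto $[\underline b]$ (the common last symbol guarantees admissibility is preserved). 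For a measurable $E\subseteq[\underline a]$, write $E=[\underline a]\cap\sigma^{-n}(\sigma^n E)$ and use the defining DLR equation (\ref{dlrinf}) to compute $\nu(E)$ as an integral over $\sigma^n E$ of $e^{\phi_n(\underline a\,\sigma^n x)}/\sum_{\sigma^n y=\sigma^n x}e^{\phi_n(y)}$ against the image measure $\nu\circ\sigma^{-n}$ (restricted appropriately); doing the same for $v_{\underline a,\underline b}(E)\subseteq[\underline b]$ and taking the ratio, the denominators $\sum_{\sigma^n y=\sigma^n x}e^{\phi_n(y)}$ cancel and one is left exactly with $\nu(v_{\underline a,\underline b}(E))=\int_E e^{\phi_n(\underline b x_n^\infty)-\phi_n(\underline a x_n^\infty)}\,d\nu$, which is (\ref{vab}). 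The $\sigma$-finiteness on $\sigma^{-n}\calb$ is what makes these manipulations with conditional expectations legitimate and lets us disintegrate over $\sigma^n E$.

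For the converse, assume (\ref{vab}) holds for all such pairs. I want to recover (\ref{dlrinf}). Fix $n$ and a cylinder $[\underline a]$ of length $n$ with last symbol $a_{n-1}=:s$. The point $x$ with $\sigma^n x$ fixed ranges, as we vary the first $n$ coordinates, over the finitely-or-countably many cylinders $[\underline b]$ with $b_{n-1}=s$ and $A(b_{n-1},(\sigma^nx)_0)=1$; these are exactly the cylinders reachable from $\underline a$ by some $v_{\underline a,\underline b}$. For a bounded $\sigma^{-n}\calb$-measurable test function $g=g\circ\sigma^n$ supported where things are finite, compute $\int \mathbbm 1_{[\underline a]}\, g\,d\nu$ and, using (\ref{vab}) to transport each piece $[\underline b]$ back to $[\underline a]$, express $\int \mathbbm 1_{[\underline b]}\,g\,d\nu$ in terms of an integral over $[\underline a]$ with the Radon–Nikodym factor $e^{\phi_n(\underline b)-\phi_n(\underline a)}$. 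Summing over all admissible $\underline b$ (Monotone Convergence, using $\nu(\pi_n^{-1}\{s\})<\infty$ to control the total) gives $\int g\,\mathbbm 1_{\pi_n^{-1}\{s\}}\,d\nu=\int_{[\underline a]} g\cdot\big(\sum_{\underline b}e^{\phi_n(\underline b\,\sigma^n x)-\phi_n(\underline a\,\sigma^n x)}\big)\,d\nu$. Rearranging, $\int g\,\mathbbm 1_{[\underline a]}\,d\nu=\int g\cdot \frac{e^{\phi_n(\underline a\,\sigma^nx)}}{\sum_{\sigma^ny=\sigma^nx}e^{\phi_n(y)}}\,\mathbbm 1_{\{\underline a\sigma^nx\in\Sigma_A\}}\,d\nu$ (the sum over $\underline b$ is, after clearing $e^{\phi_n(\underline a)}$, exactly $\sum_{\sigma^ny=\sigma^nx}e^{\phi_n(y)}$, so the RHS integrand is $\sigma^{-n}\calb$-measurable); since $g$ was an arbitrary nonnegative $\sigma^{-n}\calb$-measurable function, this is the defining property of $\mathbb{E}_\nu[\mathbbm 1_{[\underline a]}\mid\sigma^{-n}\calb]$, i.e.\ (\ref{dlrinf}).

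The main obstacle I expect is bookkeeping the $\sigma$-finiteness and integrability throughout, rather than any deep idea: one must check that the test functions $g$ can be taken in $L^1(\nu)$ restricted to sets of finite measure (this is where $\nu(\pi_n^{-1}\{s\})<\infty$ is used repeatedly, and implicitly why Theorem \ref{TMG} is invoked in Sarig-style arguments to pass from finite sub-cylinders to $\sigma^{-n}\calb$), that the countable sum over $\underline b$ converges and interchanges with the integral (Monotone Convergence), and that the map $v_{\underline a,\underline b}$ genuinely lands in $[\underline b]$ — which forces the hypothesis $a_{n-1}=b_{n-1}$, since otherwise $(\underline b\,x_n^\infty)$ need not be admissible. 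A secondary subtlety is that (\ref{vab}) is only asserted when $\nu([\underline a])>0$; for cylinders of $\nu$-measure zero both sides of (\ref{dlrinf}) vanish $\nu$-a.e., so those terms drop out of the sum harmlessly, and one should note this explicitly.
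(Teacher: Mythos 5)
Your forward direction is sound: transporting a set $E\subseteq[\underline{a}]$ through the DLR equation and using that $e^{\phi_n(\underline{b}\sigma^nx)-\phi_n(\underline{a}\sigma^nx)}$ is $\sigma^{-n}\calb$-measurable does give (\ref{vab}). The converse, however, contains a genuine gap. You assert that the $\sigma^n$-preimages of $\sigma^nx$ lie exactly in the cylinders $[\underline{b}]$ with $b_{n-1}=s=a_{n-1}$, ``the cylinders reachable from $\underline{a}$ by some $v_{\underline{a},\underline{b}}$''. This is false: the preimages are $\underline{c}\,\sigma^nx$ for every admissible word $\underline{c}$ of length $n$ with $A(c_{n-1},x_n)=1$, and $c_{n-1}$ ranges over all predecessors of $x_n$, not only over $s$ (already on the full shift every word occurs). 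Consequently the quantity your computation produces, $\sum_{\underline{b}:\,b_{n-1}=s}e^{\phi_n(\underline{b}\sigma^nx)}$, is in general strictly smaller than the DLR denominator $\sum_{\sigma^ny=\sigma^nx}e^{\phi_n(y)}$, and the final ``rearranging'' step collapses. What your identity actually yields is $\mathbb{E}_\nu\big(\mathbbm{1}_{[\underline{a}]}\mid\sigma^{-n}\calb\big)=\mathbb{E}_\nu\big(\mathbbm{1}_{\{x:\,x_{n-1}=s\}}\mid\sigma^{-n}\calb\big)\cdot\big(\sum_{\underline{b}:\,b_{n-1}=s}e^{\phi_n(\underline{b}\sigma^nx)-\phi_n(\underline{a}\sigma^nx)}\big)^{-1}$, and hypothesis (\ref{vab}) used only at length $n$ can never identify the first factor, because it never compares cylinders with different last symbols.

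The missing idea, which is how the adaptation of Proposition 2.1 of \cite{Sa5} intended by the paper works, is to invoke (\ref{vab}) at lengths $n+k$: for any admissible word $\underline{w}$ of length $k\ge 1$, the cylinders $[\underline{a}\,\underline{w}]$ and $[\underline{c}\,\underline{w}]$ share the last symbol $w_{k-1}$ whatever $c_{n-1}$ is, and $\phi_{n+k}(\underline{c}\,\underline{w}x)-\phi_{n+k}(\underline{a}\,\underline{w}x)=\phi_n(\underline{c}\,\underline{w}x)-\phi_n(\underline{a}\,\underline{w}x)$, so one can compute $\mathbb{E}_\nu\big(\mathbbm{1}_{[\underline{a}]}\mid\mathcal{F}_{n,k}\big)$, where $\mathcal{F}_{n,k}$ is generated by the coordinates $n,\dots,n+k-1$, as a ratio of cylinder masses involving \emph{all} preimage words, and then let $k\to\infty$, identifying the limit with $\mathbb{E}_\nu\big(\mathbbm{1}_{[\underline{a}]}\mid\sigma^{-n}\calb\big)$ through the $\sigma$-finite martingale theorem, Theorem \ref{TMG}. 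That theorem, together with the hypothesis $\nu(\pi_n^{-1}\{s\})<\infty$, is precisely what your converse never actually uses --- a sign that it has been short-circuited. A secondary inaccuracy: for a cylinder with $\nu([\underline{a}])=0$ the left side of (\ref{dlrinf}) vanishes a.e., but the right side does not automatically; one must show $\nu\{x:\underline{a}\sigma^nx\in\Sigma_A\}=0$, which again follows from (\ref{vab}) applied to longer cylinders (compare the final step of Lemma \ref{composG}), not from the remark that ``both sides vanish''.
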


%\bfblue{\textit{Remark 2}: The measure $\nu$ needs the condition $\nu\left([s]\right)<\infty$ for every $s\in S$ only to prove the reciprocal.}

Theorem \ref{con-dlr} below states that every $(\phi,\lambda)$-conformal measure with $\lambda>0$ such that the event $\pi^{-1}_n\{a\}$ has finite mass for all $n\ge 0$ and $a\in S$ is a $\phi$-DLR measure. In particular, for recurrent potentials $\phi$ with $\|L_{\phi}\mathbbm{1}\|_{\infty}<\infty$, by Proposition \ref{fo1}, all conformal measures from Generalized Ruelle-Perron-Frobenius Theorem are $\phi$-DLR measures. Moreover, if $\Sigma_A$ is a transitive shift with row finite, and $\phi:\Sigma_A\to\mathbb{R}$ is a $\lambda$-transient potential for some $\lambda>0$ with summable variation satisfying $\|L_{\phi}\mathbbm{1}\|_{\infty}<\infty$, all conformal measures from Theorem \ref{RPFt} are $\phi$-DLR measures.

\begin{teo}\label{con-dlr}
Let $\Sigma_{A}$ be a topological Markov shift, $\phi:\Sigma_{A}\to \bbr$ a measurable potential, and let~$\nu$ be a measure satisfying $\nu\left(\pi^{-1}_n\{a\}\right)<\infty$ for every $n\ge 0$ and $a\in S$. If $\nu$ is a non-singular and $(\phi,\lambda)$-conformal measure for some $\lambda>0$, then $\nu$ is a $\phi$-\emph{DLR} measure.
\end{teo}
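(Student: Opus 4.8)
The plan is to reduce the DLR equations to the conformality condition via the characterization in Proposition~\ref{resu2}. Since $\nu$ satisfies $\nu(\pi^{-1}_n\{a\})<\infty$ for all $n\ge 0$ and $a\in S$, the hypotheses of Proposition~\ref{resu2} hold, so it suffices to verify that for every pair of cylinders $[\underline{a}]=[a_0,\dots,a_{n-1}]$ and $[\underline{b}]=[b_0,\dots,b_{n-1}]$ of common length $n$ with $a_{n-1}=b_{n-1}$ and $\nu([\underline{a}])>0$, the bijection $v_{\underline{a},\underline{b}}\colon[\underline{a}]\to[\underline{b}]$ satisfies
\begin{equation*}
\frac{\d\nu\circ v_{\underline{a},\underline{b}}}{\d\nu}=e^{\phi_n(\underline{b}x_n^{\infty})-\phi_n(\underline{a}x_n^{\infty})},\quad\nu\text{-a.e. on }[\underline{a}].
\end{equation*}

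First I would unwind the $(\phi,\lambda)$-conformality hypothesis into a usable ``local'' form. Iterating the defining relation $\d\nu/\d\nu\!\circledcirc\!\sigma=\lambda^{-1}e^{\phi}$ gives $\d\nu/\d\nu\!\circledcirc\!\sigma^n=\lambda^{-n}e^{\phi_n}$ on $\Sigma_A$ (this follows by the chain-rule/cocycle property of Radon--Nikodym derivatives for the tower $\nu\ll\nu\!\circledcirc\!\sigma\ll\nu\!\circledcirc\!\sigma^2\ll\cdots$, which is where non-singularity is used). Equivalently, writing $S_n=\sigma^n|_{[\underline{a}]}\colon[\underline{a}]\to\sigma^n[\underline{a}]$ and $T_n=\sigma^n|_{[\underline{b}]}$, and using that $\nu\!\circledcirc\!\sigma^n$ restricted to a length-$n$ cylinder is exactly the pushforward of $\nu$ under the shift on that cylinder, one gets, for Borel $E\subseteq\sigma^n[\underline{a}]=\sigma^n[\underline{b}]$ (which are equal because $a_{n-1}=b_{n-1}$),
\begin{equation*}
\nu(S_n^{-1}E)=\lambda^{-n}\int_{E}e^{\phi_n(\underline{a}y)}\,\d\nu(y),\qquad
\nu(T_n^{-1}E)=\lambda^{-n}\int_{E}e^{\phi_n(\underline{b}y)}\,\d\nu(y).
\end{equation*}
Since $v_{\underline{a},\underline{b}}=T_n^{-1}\circ S_n$ on $[\underline{a}]$, for any Borel $F\subseteq[\underline{a}]$ we have $\nu\circ v_{\underline{a},\underline{b}}(F)=\nu(v_{\underline{a},\underline{b}}(F))=\nu(T_n^{-1}(S_n F))$, and applying the two displayed identities with $E=S_nF$ yields
\begin{equation*}
\nu\circ v_{\underline{a},\underline{b}}(F)=\lambda^{-n}\int_{S_nF}e^{\phi_n(\underline{b}y)}\,\d\nu(y)
=\int_{F}e^{\phi_n(\underline{b}\,\sigma^n x)-\phi_n(\underline{a}\,\sigma^n x)}\,\d\nu(x),
\end{equation*}
where the last step substitutes $\d\nu(y)$ on $S_nF$ for $\lambda^{-n}e^{\phi_n(\underline{a}\,\sigma^nx)}\d\nu(x)$ on $F$ using the first identity again. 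This is exactly \eqref{vab}, so Proposition~\ref{resu2} gives that $\nu$ is $\phi$-DLR.

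The main obstacle I anticipate is the careful bookkeeping of the change-of-variables/pushforward identities: making precise the claim that $\nu\!\circledcirc\!\sigma^n$ agrees on a length-$n$ cylinder $[\underline{a}]$ with the preimage measure $E\mapsto\nu(S_n^{-1}E)$, and justifying the substitution of Radon--Nikodym derivatives under the bijection $S_n$ (one needs $\nu\ll\nu\!\circledcirc\!\sigma^n$, which is the iterated non-singularity, plus that $S_n$ is a Borel isomorphism onto its image so the pushforward behaves as expected). One must also make sure the ergodic-sum bookkeeping is right: $\phi_n(\underline{a}\,\sigma^n x)$ depends only on the first $n$ coordinates being $\underline{a}$ and on $\sigma^n x$, so it is $\sigma^{-n}\mathcal{B}$-measurable after restriction, which is what lets the final integral be rewritten over $F$. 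None of these steps is deep, but they need to be stated cleanly; everything else is a direct consequence of Proposition~\ref{resu2} and Proposition~\ref{conf-auto}.
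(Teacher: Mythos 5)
Your proposal is correct and follows essentially the same route as the paper: the paper's proof is exactly the adaptation of Proposition~2.2 of Sarig's lecture notes via Proposition~\ref{resu2}, i.e.\ verifying the relation \eqref{vab} for the maps $v_{\underline{a},\underline{b}}$ by iterating the conformality identity on injectivity domains (the same local identity as \eqref{ojo} in Proposition~\ref{fo1}), which is precisely the computation you carry out. The bookkeeping points you flag (pushforward under $\sigma^n|_{[\underline{a}]}$ and the change of variables) are handled exactly as you describe, so there is no gap.
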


\begin{proof} 
The proof is an adaptation of Proposition $2.2$ in \cite{Sa5}, using Proposition \ref{resu2}.
\end{proof}

\section{When the DLR measures are the conformal measures}\label{sec:DLRimpliesconformal}

In this section we investigate when a DLR measure is a conformal measure. In addition to that we will study the uniqueness of the DLR measure when the Markov shift satisfies the BIP property, as well as the connection with the equilibrium measures. 
%The following lemma and proposition will be important for the proof of Theorem \ref{por1G}.

\begin{lema}\label{composG}
Let $\Sigma_{A}$ be a topological Markov shift, $\phi:\Sigma_{A}\to \bbr$ a measurable potential, and $\mu$ a non-singular $\phi$-\emph{DLR} measure. Consider $a,c\in S$ such that $A(a,c)=1$. If $\mu\left([ac]\right)>0$, then
	\begin{enumerate}
		\item[i)]  For every $b\in S$ such that $A(b,c)=1$, we have $\mu\circ v_{ac,bc}\sim \mu$ in $\mathcal{B}\cap[ac]$.
		\item[ii)]  $\mu\circ\sigma\big|_{[ac]}\sim \mu$ in $\mathcal{B}\cap[ac].$
		\item[iii)]  For every $b\in S$ such that $A(b,c)=1$, we have $\mu\circ I_b\sim \mu$ in $\mathcal{B}\cap[c]$.
	\end{enumerate}
Otherwise, if $\mu\left([ac]\right)=0$, then  $\mu\left([c]\right)=0$.
\end{lema}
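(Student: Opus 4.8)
The plan is to argue directly from the DLR equations (\ref{dlrinf}), not through Proposition \ref{resu2}, since we only assume $\mu$ is a non-singular $\phi$-DLR measure; in particular, by Definition \ref{def:dlrinf}, $\mu$ is $\sigma$-finite on each $\sigma^{-n}\calb$, which is exactly what is needed for the conditional expectations $\mathbb{E}_\mu(\cdot\mid\sigma^{-n}\calb)$ to be defined and for $\int_B\mathbb{E}_\mu(f\mid\sigma^{-n}\calb)\,d\mu=\int_B f\,d\mu$ to hold for all non-negative measurable $f$ and all $B\in\sigma^{-n}\calb$. For $b\in S$ with $A(b,c)=1$, the insertion $I_b\colon[c]\to[bc]$, $I_b(z)=bz$, is the inverse branch $(\sigma|_{[bc]})^{-1}$; it, together with $\sigma|_{[ac]}\colon[ac]\to[c]$ and $v_{ac,bc}=I_b\circ\sigma|_{[ac]}\colon[ac]\to[bc]$, is a homeomorphism between cylinder sets, hence Borel-bimeasurable. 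Finally recall $L_\phi\mathbbm{1}(z)=\sum_{\sigma y=z}e^{\phi(y)}\in(0,\infty]$ for every $z$, the positivity because $A$ has no zero column.

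The one point requiring genuine care is the finiteness $L_\phi\mathbbm{1}<\infty$, which I claim holds both $\mu$-a.e.\ and $(\mu\circ\sigma^{-1})$-a.e. To see this, apply (\ref{dlrinf}) with $n=1$ to every cylinder $[d]$, $d\in S$, and sum over $d$: by countable additivity of conditional expectation for $\sigma$-finite measures the left sides add to $\mathbb{E}_\mu(\mathbbm{1}_{\Sigma_A}\mid\sigma^{-1}\calb)=1$, while the right sides add to $L_\phi\mathbbm{1}(\sigma x)/L_\phi\mathbbm{1}(\sigma x)$, which is $1$ where $L_\phi\mathbbm{1}(\sigma x)<\infty$ and $0$ where it is $+\infty$. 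Hence $\{L_\phi\mathbbm{1}\circ\sigma=\infty\}$ is $\mu$-null, and non-singularity $\mu\circ\sigma^{-1}\sim\mu$ propagates this to both claimed null statements.

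The heart of the proof is one identity: for every $d\in S$ with $A(d,c)=1$ and every Borel $G\subseteq[c]$,
\[
\mu\big([dc]\cap\sigma^{-1}(G)\big)=\int_{\sigma^{-1}(G)}\mathbb{E}_\mu\big(\mathbbm{1}_{[d]}\mid\sigma^{-1}\calb\big)\,d\mu=\int_{G}\frac{e^{\phi(dz)}}{L_\phi\mathbbm{1}(z)}\,d(\mu\circ\sigma^{-1})(z).
\]
The first equality uses $[dc]\cap\sigma^{-1}(G)=[d]\cap\sigma^{-1}(G)$ (as $G\subseteq[c]$), $\sigma^{-1}(G)\in\sigma^{-1}\calb$, and the defining property of conditional expectation; the second uses the DLR formula for $[d]$ (on $\sigma^{-1}(G)$ the indicator $\mathbbm{1}_{\{A(d,(\sigma x)_0)=1\}}$ is $1$ because $A(d,c)=1$) and the image-measure formula $\int h\circ\sigma\,d\mu=\int h\,d(\mu\circ\sigma^{-1})$. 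Since $z\mapsto e^{\phi(dz)}/L_\phi\mathbbm{1}(z)$ is strictly positive on $[c]$ off the $(\mu\circ\sigma^{-1})$-null set $\{L_\phi\mathbbm{1}=\infty\}$, the last integral vanishes iff $(\mu\circ\sigma^{-1})(G)=0$, iff $\mu(G)=0$ by non-singularity. Hence, for every Borel $G\subseteq[c]$,
\[
\mu\big([dc]\cap\sigma^{-1}(G)\big)=0\ \Longleftrightarrow\ \mu(G)=0. \qquad(\star)
\]

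Everything follows from $(\star)$, which uses nothing about $\mu([ac])$. Taking $d=a$, $G=[c]$ (so $[ac]\cap\sigma^{-1}([c])=[ac]$) gives $\mu([ac])=0\iff\mu([c])=0$, which is the ``otherwise'' clause. For (ii): any Borel $E\subseteq[ac]$ is $[ac]\cap\sigma^{-1}(\sigma(E))$ with $\sigma(E)\subseteq[c]$, so $(\star)$ with $d=a$ gives $\mu(E)=0\iff\mu(\sigma(E))=0$, i.e.\ $\mu\circ\sigma|_{[ac]}\sim\mu$ in $\calb\cap[ac]$. For (iii): any Borel $E\subseteq[c]$ satisfies $I_b(E)=[bc]\cap\sigma^{-1}(E)$, so $(\star)$ with $d=b$ gives $\mu(I_b(E))=0\iff\mu(E)=0$, i.e.\ $\mu\circ I_b\sim\mu$ in $\calb\cap[c]$. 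For (i): from $v_{ac,bc}=I_b\circ\sigma|_{[ac]}$ and (iii), (ii), for Borel $E\subseteq[ac]$ one gets $\mu(v_{ac,bc}(E))=0\iff\mu(\sigma(E))=0\iff\mu(E)=0$, i.e.\ $\mu\circ v_{ac,bc}\sim\mu$ in $\calb\cap[ac]$. The main obstacle is not conceptual: it is precisely the finiteness of $L_\phi\mathbbm{1}$ treated in the second paragraph — ruling out a positive-mass set $\{L_\phi\mathbbm{1}=\infty\}$, where both the whole family of $n=1$ DLR equations and non-singularity are needed — together with routine care about conditional expectations for an infinite $\sigma$-finite measure.
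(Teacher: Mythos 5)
Your proof is correct, but it takes a genuinely different route from the paper. The paper deduces the lemma from Proposition \ref{resu2}: item i) is immediate from the positivity of the Radon--Nikodym derivative $e^{\phi_n(\underline{b}x)-\phi_n(\underline{a}x)}$ of $\mu\circ v_{ac,bc}$; item ii) combines non-singularity with the decomposition $\sigma^{-1}\sigma\vert_{[ac]}E=\bigsqcup_{A(b,c)=1}v_{ac,bc}(E)$; item iii) and the final clause again mix Proposition \ref{resu2} with non-singularity. You instead bypass Proposition \ref{resu2} entirely and argue from the $n=1$ DLR equations of Definition \ref{def:dlrinf}: summing them over all $d\in S$ and comparing with $\mathbb{E}_\mu(\mathbbm{1}\mid\sigma^{-1}\calb)=1$ to get $L_\phi\mathbbm{1}\circ\sigma<\infty$ $\mu$-a.e., and then the identity $\mu\big([dc]\cap\sigma^{-1}G\big)=\int_G e^{\phi(dz)}\,L_\phi\mathbbm{1}(z)^{-1}\,\d(\mu\circ\sigma^{-1})(z)$ for Borel $G\subseteq[c]$, from which all three items and the ``otherwise'' clause follow uniformly via non-singularity and the set identities $E=[ac]\cap\sigma^{-1}\sigma(E)$, $I_b(E)=[bc]\cap\sigma^{-1}E$, $v_{ac,bc}=I_b\circ\sigma\vert_{[ac]}$ (all of which you verify correctly, including Borel-measurability of the images). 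Your route buys two things: it uses only the hypotheses actually stated in the lemma (non-singular $\phi$-DLR, i.e.\ compatibility of the $\sigma^{-n}\calb$), whereas Proposition \ref{resu2} as stated carries the extra finiteness assumption $\mu(\pi_n^{-1}\{s\})<\infty$; and it shows the positivity assumption $\mu([ac])>0$ is not needed for items i)--iii) (when $\mu([ac])=0$ everything involved is null, so the equivalences hold trivially), your $(\star)$ even giving the ``otherwise'' clause as a two-sided equivalence. What the paper's route buys is brevity and economy: Proposition \ref{resu2} is needed elsewhere anyway (e.g.\ in Proposition \ref{Rus1G} and Theorem \ref{con-dlr}), and once it is available the lemma is a few lines. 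Your treatment of the conditional expectations for an infinite $\sigma$-finite measure (nonnegative, possibly non-integrable integrands, monotone convergence, a.e.\ uniqueness from $\sigma$-finiteness on the sub-$\sigma$-algebra) is the right level of care and contains no gap.
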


\begin{proof}
Item i) is straigthforward from Proposition \ref{resu2}. For item ii), since $\mu$ is non-singular and $\mu(E)\leq \mu(\sigma^{-1}\sigma|_{[ac]} E)$ for every $E\in\mathcal{B}\cap[ac]$, we conclude $\mu\ll\mu\circ\sigma\big|_{[ac]}$ in $\mathcal{B}\cap[ac]$.

Let $E_a\in\mathcal{B}\cap[ac]$ satisfying $\mu\big(E_a\big)=0$. For each $b\in S$ with $A(b,c)=1$, consider $E_{b}={v}_{ac,bc}(E_a)$. Note that $E_b\in\mathcal{B}\cap[bc]$. Thus, by Proposition \ref{resu2}, we have $\mu\big(E_b\big)=0$. Since
\begin{equation*}
\mu\left(\sigma^{-1}\sigma\big|_{[ac]} E_a\right)=\sum_{A(b,c)=1}\mu\left(E_b\right)=0,
\end{equation*}
we obtain $\mu\big(\sigma\big|_{[ac]} E_a\big)=0$, and thus $\mu\circ\sigma\big|_{[ac]}\ll \mu$ in $\mathcal{B}\cap[ac]$.

To show item iii), since the map $I_b:[c]\to [bc]$ is a homeomorphism, for every $E\in\mathcal{B}\cap[c]$, we have $I_b(E)\in\mathcal{B}\cap[bc]$.  
Let $E\in \mathcal{B}\cap[c]$ such that $\mu(E)=0$. Since $I_b(E)\subset\sigma^{-1}E$ and $\mu$ is non-singular, we have $\mu\circ I_b(E)=0$. Now, let $E\in\mathcal{B}\cap[c]$ such that $\mu\circ I_b(E)=0$. By item ii), we have  $\mu(E)=\mu(\sigma\big|_{[bc]}(I_b(E)))=0$. Therefore $\mu\circ I_b\sim \mu$ in $\mathcal{B}\cap[c]$.	 

Now, assume $\mu\left([ac]\right)=0$. For every $b\in S$ such that $A(b,c)=1$, we have $v_{ac,bc}[ac]=[bc]$. By Proposition \ref{resu2}, we conclude $\mu\left([bc]\right)=0$. Thus, 
	\begin{equation*}
	m\left(\sigma^{-1}\sigma\vert_{[\omega c]}[\omega c]\right)=\sum_{A(b,c)=1}m\left([bc]\right)=0.
	\end{equation*}
	Since $m$ is non-singular, then $m\left([c]\right)=m\left(\sigma\vert_{[ac]}[ac]\right)=0$.
\end{proof}

\begin{prop}\label{Rus1G} 
Let $\Sigma_{A}$ be a topological Markov shift, $\phi:\Sigma_{A}\to \bbr$ a measurable potential, and $\mu$ be a non-singular $\phi$-\emph{DLR} measure. For every $a,b,c\in S$ such that $A(a,c)=A(b,c)=1$ satisfying $\mu\left([ac]\right)>0$, we have
\be\label{resultado1}
	\frac{\frac{\d\mu}{\d\mu\circledcirc\sigma}(ax)}{e^{\phi(ax)}}=\frac{\frac{\d\mu}{\d\mu\circledcirc\sigma}(bx)}{e^{\phi(bx)}},\quad \mu\text{-a.e.} \mbox{ in }  [c].
\ee
\end{prop}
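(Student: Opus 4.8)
The plan is to study the density $g:=\frac{\d\mu}{\d\mu\circledcirc\sigma}$ (it exists because $\mu$ is non-singular, hence $\mu\ll\mu\circledcirc\sigma$) on the two cylinders $[ac]$ and $[bc]$ by transporting everything down to the common base cylinder $[c]$ along the two inverse branches of $\sigma$, namely $I_a\colon[c]\to[ac]$, $I_a(x)=ax$, and $I_b\colon[c]\to[bc]$, $I_b(x)=bx$, the homeomorphisms already appearing in Lemma~\ref{composG}. The engine of the proof is the single identity $v_{ac,bc}\circ I_a=I_b$ on $[c]$: the DLR hypothesis controls the transformation $v_{ac,bc}$ through Proposition~\ref{resu2}, and via this identity that control transfers to $g(a\,\cdot)$ versus $g(b\,\cdot)$.

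\emph{Reading $g$ on a first cylinder.} For a measurable $E\subseteq[ac]$ only the $s=a$ term of the defining sum $\mu\circledcirc\sigma(E)=\sum_{s\in S}\mu(\sigma(E\cap[s]))$ contributes, so $\mu\circledcirc\sigma(E)=\mu(\sigma E)$; since $\sigma|_{[ac]}\colon[ac]\to[c]$ is a bijection with inverse $I_a$, this says that $(\mu\circledcirc\sigma)|_{[ac]}$ is the push-forward of $\mu|_{[c]}$ under $I_a$. Plugging this into $\mu(E)=\int_E g\,\d(\mu\circledcirc\sigma)$ and changing variables yields
\begin{equation*}
\mu(aF)=\int_F g(ax)\,\d\mu(x)\qquad\text{for every }F\in\mathcal{B}\cap[c],
\end{equation*}
and likewise $\mu(bF)=\int_F g(bx)\,\d\mu(x)$. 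The same computation shows that the push-forward of $\mu|_{[ac]}$ under $\sigma|_{[ac]}$ is the measure $g(a\,\cdot)\,\d\mu$ on $[c]$, equivalently $\int_{aF}h(\sigma y)\,\d\mu(y)=\int_F h(x)\,g(ax)\,\d\mu(x)$ for every nonnegative measurable $h$ on $[c]$.

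\emph{Reading $v_{ac,bc}$ and concluding.} Apply Proposition~\ref{resu2} (or Lemma~\ref{composG}\,(i)) to the length-$2$ cylinders $[ac]$ and $[bc]$, which share the last symbol $c$ and satisfy $\mu([ac])>0$: the map $v_{ac,bc}$ obeys $\frac{\d(\mu\circ v_{ac,bc})}{\d\mu}(y)=e^{\phi_2(v_{ac,bc}y)-\phi_2(y)}$ for $\mu$-a.e.\ $y\in[ac]$, and since $\sigma(v_{ac,bc}y)=\sigma y$ the exponent collapses to $\phi(v_{ac,bc}y)-\phi(y)$. Using $v_{ac,bc}(aF)=bF$, together with $v_{ac,bc}y=b\sigma y$ and $y=a\sigma y$ for $y\in[ac]$, we obtain for every $F\in\mathcal{B}\cap[c]$
\begin{equation*}
\mu(bF)=\mu\bigl(v_{ac,bc}(aF)\bigr)=\int_{aF}e^{\phi(b\sigma y)-\phi(a\sigma y)}\,\d\mu(y)=\int_F e^{\phi(bx)-\phi(ax)}\,g(ax)\,\d\mu(x),
\end{equation*}
the last step being the change-of-variables rule from the previous paragraph with $h(x)=e^{\phi(bx)-\phi(ax)}$. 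Comparing with $\mu(bF)=\int_F g(bx)\,\d\mu(x)$, and using that $[c]$ is $\sigma$-finite under $\mu$ while both integrands are nonnegative, we conclude $g(bx)=e^{\phi(bx)-\phi(ax)}\,g(ax)$ for $\mu$-a.e.\ $x\in[c]$; multiplying by $e^{-\phi(bx)}$ gives precisely \eqref{resultado1}.

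What needs care (rather than a genuine obstacle) is the measure-theoretic bookkeeping: checking that $(\mu\circledcirc\sigma)|_{[ac]}$ is indeed the push-forward of $\mu|_{[c]}$ along the injective branch $\sigma|_{[ac]}$, that all the changes of variables hold for arbitrary nonnegative measurable integrands with no integrability assumption, and that equality of the integrals over every member of $\mathcal{B}\cap[c]$ upgrades to an almost-everywhere identity because $[c]$ is $\sigma$-finite. All the dynamical content is carried by Proposition~\ref{resu2} (and Lemma~\ref{composG}); no further ingredient is required.
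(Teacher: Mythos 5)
Your proof is correct, and although it rests on the same two pillars as the paper's argument --- Proposition \ref{resu2} applied to the pair of length-two cylinders $[ac]$, $[bc]$, and the factorization of $v_{ac,bc}$ through the common base $[c]$ via the inverse branches $I_a$, $I_b$ --- the implementation is genuinely different. The paper writes $v_{ac,bc}=I_b\circ\sigma\big|_{[ac]}$ and works at the level of Radon--Nikodym derivatives: it needs the equivalences of Lemma \ref{composG} (in particular $\mu\circ\sigma\big|_{[ac]}\sim\mu$ on $[ac]$ and $\mu\circ I_b\sim\mu$ on $[c]$) to justify a chain rule, to transport a.e.\ identities from $[ac]$ down to $[c]$, and to invert the density $\frac{\d\mu\circ\sigma|_{[ac]}}{\d\mu}$, before identifying $\frac{\d\mu\circ I_b}{\d\mu}(x)=\frac{\d\mu}{\d\mu\circledcirc\sigma}(bx)$ and concluding from (\ref{e1G}) and (\ref{e2G}). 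You instead observe that $(\mu\circledcirc\sigma)\big|_{[ac]}$ is the push-forward of $\mu\big|_{[c]}$ under $I_a$ (and likewise for $b$), and convert everything into identities between measures on $[c]$ with nonnegative densities: $\mu(aF)=\int_F g(ax)\,\d\mu$, $\mu(bF)=\int_F g(bx)\,\d\mu$, and $\mu(bF)=\int_F e^{\phi(bx)-\phi(ax)}g(ax)\,\d\mu$ coming from Proposition \ref{resu2} via $v_{ac,bc}\circ I_a=I_b$; the conclusion then follows from uniqueness of densities with respect to the $\sigma$-finite restriction of $\mu$ to $[c]$. This buys you several simplifications: no division by densities (hence no positivity caveats), no transport of null sets along $\sigma\big|_{[ac]}$, non-singularity used only to guarantee that $g=\frac{\d\mu}{\d\mu\circledcirc\sigma}$ exists, and items ii)--iii) of Lemma \ref{composG} are not needed at all; the paper's heavier route has the side benefit that the equivalences of Lemma \ref{composG} are established explicitly and reused later (e.g.\ in the proof of Theorem \ref{por1G}). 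One shared caveat, not a gap of yours: like the paper, you invoke Proposition \ref{resu2} for a DLR measure, so you implicitly inherit its finiteness hypothesis $\mu\left(\pi^{-1}_n\{s\}\right)<\infty$, exactly as the original proof does.
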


\begin{proof}
Let $a,b\in S$ such that $A(a,c)=A(b,c)=1$ and $\mu\left([ac]\right)>0$. The map ${v}_{ac,bc}:[ac]\mapsto [bc]$ can be written as ${v}_{ac,bc}=I_b\circ\sigma\big|_{[ac]}$, where $I_b(x)=bx$. 
By item i) and ii) of Lemma \ref{composG},  for every $y\in [ac]$, we have
	\begin{equation*}
	\frac{\d\mu\circ {v}_{ac,bc}}{\d\mu}(y) = \frac{\d\mu\circ I_{b}\circ\sigma\big|_{[ac]}}{\d\mu\circ\sigma\big|_{[ac]}}(y)\cdot \frac{\d\mu\circ\sigma\big|_{[ac]}}{\d\mu}(y),\quad \mu\text{-a.e.}\mbox{ in } [ac],
	\end{equation*}
i.e., the measurable set
	\begin{equation*}
	E=\bigg\{y\in[ac]: \frac{d\mu\circ {v}_{ac,bc}}{d\mu}(y)\neq\frac{d\mu\circ I_{b}\circ\sigma\big|_{[ac]}}{d\mu\circ\sigma\big|_{[ac]}}(y)\cdot \frac{d\mu\circ\sigma\big|_{[ac]}}{d\mu}(y) \bigg\}
	\end{equation*}
	satisfies $\mu(E)=0$. Thus, by item ii) of Lemma \ref{composG}, we have $\mu(\sigma\big|_{[ac]} E)=0$. This implies that the set
	\begin{equation*}
	\sigma|_{[ac]}(E)=\left\{x\in [c]: ~\frac{\d\mu\circ {v}_{ac,bc}}{\d\mu}(ax)\neq\frac{\d\mu\circ I_{b}\circ\sigma\big|_{[ac]}}{\d\mu\circ\sigma\big|_{[ac]}}\big(ax\big)\cdot \frac{\d\mu\circ\sigma\big|_{[ac]}}{\d\mu}\big(ax\big) \right\}
	\end{equation*}
has zero measure. Thus
	\begin{equation}\label{ivi1G}
	\frac{\d\mu\circ {v}_{ac,bc}}{\d\mu}(ax)=\frac{\d\mu\circ I_{b}\circ\sigma\big|_{[ac]}}{\d\mu\circ\sigma\big|_{[ac]}}(ax)\cdot \frac{\d\mu\circ\sigma\big|_{[ac]}}{\d\mu}(ax),\quad \mu\text{-a.e.}\mbox{ in } [c].
	\end{equation}
	By item iii) of Lemma \ref{composG}, we have
	\begin{equation*}
	\frac{\d\mu\circ I_b\circ\sigma\big|_{[ac]}}{\d\mu\circ\sigma\big|_{[ac]}}(y)=\frac{\d\mu\circ I_b}{\d\mu}\circ\sigma\big|_{[ac]}(y),\quad \mu\circ\sigma\big|_{[ac]}\text{-a.e.}\mbox{ in } [ac].
	\end{equation*}
	By the same argument to derive Equation (\ref{ivi1G}), we obtain
	\begin{equation}\label{ivi2G}
	\frac{\d\mu\circ I_b\circ\sigma\big|_{[ac]}}{\d\mu\circ\sigma\big|_{[ac]}}(ax)=\frac{\d\mu\circ I_b}{\d\mu}(x),\quad \mu\text{-a.e.}\mbox{ in } [c].
	\end{equation}
Replacing $(\ref{ivi2G})$ in $(\ref{ivi1G})$,
	\begin{equation}\label{ivi3G}
	\frac{\d\mu\circ {v}_{ac,bc}}{\d\mu}(ax)=\frac{\d\mu\circ I_b}{\d\mu}(x).\frac{\d\mu\circledcirc\sigma\big|_{[ac]}}{\d\mu}(ax),\quad \mu\text{-a.e.}\mbox{ in } [c].
	\end{equation}
	Since $\sigma\big|_{[bc]}\circ I_b=\id$, where $\id$ is the identity function, and  $\mu\circ\sigma\big|_{[bc]}=\mu\circledcirc\sigma\big|_{[bc]}$ in $\mathcal{B}\cap[bc]$,
\be\label{ivi40G}
\frac{\d\mu\circ I_{b}}{\d\mu}(x)=\frac{d\mu\circ I_{b}}{d\mu\circ\sigma\big|_{[bc]}\circ I_{b}}(x)=\frac{\d\mu}{\d\mu\circledcirc\sigma\big|_{[bc]}}(bx),\quad \mu\text{-a.e.}\mbox{ in } [c].
\ee
		By item ii) of Lemma \ref{composG},
			\begin{equation}\label{ivia4G}
		\frac{\d\mu\circledcirc\sigma\big|_{[ac]}}{\d\mu}(ax)=
		\left(\frac{\d\mu}{\d\mu\circledcirc\sigma\big|_{[ac]}}(ax)\right)^{-1},\quad \mu\text{-a.e.}\mbox{ in } [c].
		\end{equation}
			%	\left(\frac{\d\mu}{\d\mu\circledcirc\sigma\big|_{[ac]}}\right)^{-1}(ax)=
	Replacing Equation $(\ref{ivi40G})$ and $(\ref{ivia4G})$ in Equation $(\ref{ivi3G})$, 
	\begin{equation}\label{e1G}
	\frac{\d\mu\circ {v}_{ac,bc}}{\d\mu}(ax)=\frac{\frac{\d\mu}{\d\mu\circledcirc\sigma}(bx)}{\frac{\d\mu}{\d\mu\circledcirc\sigma}({a}x)},\quad \mu\text{-a.e.}\mbox{ in } [c].
	\end{equation}
	By Proposition \ref{resu2} and by the same argument to derive Equation (\ref{ivi1G}),
		\begin{equation}\label{e2G}
	\frac{\d\mu\circ {v}_{ac,bc}}{\d\mu}(ax)=\frac{e^{\phi(bx)}}{e^{\phi(ax)}},\quad \mu\text{-a.e.}\mbox{ in } [c].
	\end{equation}
	Finally, from $(\ref{e1G})$ and $(\ref{e2G})$, we conclude $(\ref{resultado1})$.
\end{proof}

The following result gives information on when a DLR measure is a conformal measure.

\begin{teo}\label{por1G}
Let $\Sigma_{A}$ be a topological Markov shift, $\phi:\Sigma_{A}\to \bbr$ a measurable potential, and $m$ a $\sigma$-finite $\phi$-\emph{DLR} measure. Suppose that $m$ is a $\sigma$-invariant measure. Then $L_{\phi}\mathbbm{1}=\lambda$ $m$-a.e. if, and only if, $m$ is a $(\phi,\lambda)$-conformal measure.
\end{teo}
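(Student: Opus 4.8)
The plan is to use the characterization of DLR measures from Proposition \ref{resu2} together with the identity from Proposition \ref{Rus1G}, and to translate the conformality condition $\d m / \d(m\circledcirc\sigma) = \lambda^{-1}e^{\phi}$ into a statement about the local Radon--Nikodym derivatives $\frac{\d m}{\d m\circledcirc\sigma}(ax)$ relative to $e^{\phi(ax)}$. First I would recall that $m$ is automatically non-singular: since $m$ is $\sigma$-invariant we have $m\circ\sigma^{-1} = m \sim m$, so Lemma \ref{composG} and Proposition \ref{Rus1G} apply. Then I would fix $c\in S$ with $m([c])>0$ (the case $m([c])=0$ being trivial for both sides restricted to $[c]$, and handled by the last sentence of Lemma \ref{composG}), pick some $a$ with $A(a,c)=1$ and $m([ac])>0$, and define on $[c]$ the function $g(x) := \dfrac{\frac{\d m}{\d m\circledcirc\sigma}(ax)}{e^{\phi(ax)}}$. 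Proposition \ref{Rus1G} says precisely that $g$ does not depend on the choice of the preimage symbol $a$, i.e. $\frac{\d m}{\d m\circledcirc\sigma}(bx) = g(x)\,e^{\phi(bx)}$ for every admissible $b$, $m$-a.e. in $[c]$.

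Next I would bring in Proposition \ref{prop:transferencia}: since $m$ is $\sigma$-invariant and $\sigma$-finite, $\sum_{\sigma y = x}\frac{\d m}{\d m\circledcirc\sigma}(y) = 1$ for $m$-a.e. $x$. Writing $x\in[c]$ and summing the identity from the previous paragraph over all admissible preimages $bx$ (those $b$ with $A(b,c)=1$), I get
\[
1 = \sum_{A(b,c)=1}\frac{\d m}{\d m\circledcirc\sigma}(bx) = g(x)\sum_{A(b,c)=1}e^{\phi(bx)} = g(x)\,(L_{\phi}\mathbbm{1})(x),\qquad m\text{-a.e. in }[c].
\]
Hence $g(x) = (L_{\phi}\mathbbm{1}(x))^{-1}$ $m$-a.e. on $[c]$, and therefore, for every admissible $b$,
\[
\frac{\d m}{\d m\circledcirc\sigma}(bx) = \frac{e^{\phi(bx)}}{L_{\phi}\mathbbm{1}(x)} = \frac{e^{\phi(bx)}}{L_{\phi}\mathbbm{1}(\sigma(bx))},\qquad m\text{-a.e.}
\]
Running over all $c\in S$ with $m([c])>0$, and noting the points in cylinders $[c]$ with $m([c])=0$ are $m$-null, this identity holds $m\circledcirc\sigma$-a.e. on all of $\Sigma_A$ (one should check $m\ll m\circledcirc\sigma$, which follows from non-singularity as remarked after the definition of $\circledcirc$). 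From here both implications are immediate: if $L_{\phi}\mathbbm{1} = \lambda$ $m$-a.e., then $\frac{\d m}{\d m\circledcirc\sigma} = \lambda^{-1}e^{\phi}$ $m\circledcirc\sigma$-a.e., i.e. $m$ is $(\phi,\lambda)$-conformal; conversely, if $m$ is $(\phi,\lambda)$-conformal then $\lambda^{-1}e^{\phi(bx)} = \frac{\d m}{\d m\circledcirc\sigma}(bx) = e^{\phi(bx)}/L_{\phi}\mathbbm{1}(\sigma(bx))$, so $L_{\phi}\mathbbm{1} = \lambda$ $m$-a.e. (more precisely $m\circledcirc\sigma$-a.e., which restricts to $m$-a.e. since $m\ll m\circledcirc\sigma$).

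The main obstacle I anticipate is bookkeeping with the null sets and the measure $m\circledcirc\sigma$: one must be careful that the a.e.\ statements produced cylinder-by-cylinder on the base $[c]$ patch together to an a.e.\ statement with respect to $m\circledcirc\sigma$ on the whole space, that the interchange of the (countable) sum with the a.e.\ identity is legitimate (it is, since it is a countable union of null sets), and that the edge cases $m([c])=0$ or $m([ac])=0$ are genuinely covered by the final clause of Lemma \ref{composG}. A secondary subtlety is verifying that $L_{\phi}\mathbbm{1}$ is finite $m$-a.e.\ — which is forced here, since the left-hand side of the displayed summation equals $1$, so $g(x)>0$ and $L_{\phi}\mathbbm{1}(x)<\infty$ automatically $m$-a.e. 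Everything else is a direct chaining of Propositions \ref{resu2}, \ref{Rus1G}, and \ref{prop:transferencia}.
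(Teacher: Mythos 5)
Your proposal is correct and follows essentially the same route as the paper: non-singularity from $\sigma$-invariance, then Proposition \ref{Rus1G} combined with Proposition \ref{prop:transferencia} to get $1=g(x)\,L_{\phi}\mathbbm{1}(x)$ on each cylinder $[c]$ of positive measure, from which both implications follow. The only (cosmetic) difference is the endgame: the paper verifies conformality by integrating over sets in $\mathcal{B}\cap[ac]$ and decomposing an arbitrary Borel set over $2$-cylinders, whereas you identify the Radon--Nikodym derivative $m\circledcirc\sigma$-a.e.\ directly via the countable null-set patching you describe, which is legitimate precisely because $m\circledcirc\sigma$ is built cylinder-by-cylinder.
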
 

\begin{proof}
Let $c\in S$ such that $m([c])>0$. There exists $a\in S$ such that $A(a,c)=1$ and $m\left([ac]\right)>0$. By Proposition \ref{prop:transferencia}, for every $x\in [c]$, we have
	\begin{equation}\label{cuadradoG}
	1=\sum_{\substack{b\in S \\ A(b,c)=1}}\frac{\d m}{\d m\circledcirc\sigma}(bx),\quad m\text{-a.e.}\mbox{ in } [c].
	\end{equation}	
By Proposition \ref{Rus1G}, let $b\in S$ such that $A(b,c)=1$, then
	\begin{equation*} %\label{iG}
	\frac{\d m}{\d m\circledcirc\sigma}(bx)=e^{\phi(bx)}\frac{\frac{\d m}{\d m\circledcirc\sigma}(ax)}{e^{\phi(ax)}},\quad m\text{-a.e.}\mbox{ in } [c].
	\end{equation*}
Summing over all $b\in S$ such that $A(b,c)=1$ and using Equation (\ref{cuadradoG}),
\be\label{iiG}
1=\frac{\frac{\d m}{\d m\circledcirc\sigma}(ax)}{e^{\phi(ax)}}\sum_{\substack{b\in S \\ A(b,c)=1}}e^{\phi(bx)},\quad m\text{-a.e. }\mbox{ in } [c].
\ee
Consider $L_{\phi}\mathbbm{1}=\lambda$. By Equation (\ref{iiG}), we conclude
	\begin{equation*} %\label{conpor1G}
	\frac{\d m}{\d m\circledcirc \sigma}(ax)=\lambda^{-1}e^{\phi(ax)},\quad m\text{-a.e.}\mbox{ in } [c].
	\end{equation*}
Thus, by Lemma \ref{composG}, item ii), for every $F \in \calb \cap [ac]$,
	\begin{equation}\label{inte}
	m(F)=\lambda^{-1}\int_Fe^{\phi(x)}\d m\circledcirc\sigma.
	\end{equation}
	
Let $\mathcal{W}_2(c):=\sigma^{-1}[c]$. Note that every $B\in \calb$ can be written as
	\begin{equation*}
	B=\displaystyle\bigsqcup_{c\in S}\displaystyle\bigsqcup_{\omega\in\mathcal{W}_2(c)}B\cap[\omega].
	\end{equation*}
	By Lemma \ref{composG} and Equation (\ref{inte}),
		\begin{eqnarray*}
	m(B)&=&\sum_{c\in S}\sum_{\begin{subarray}{c}\omega\in\mathcal{W}_2(c)\\m([\omega])>0\end{subarray}}m\left(B\cap[\omega]\right)\\
        &=&\sum_{c\in S}\sum_{\begin{subarray}{c}\omega\in\mathcal{W}_2(c)\\m([\omega])>0\end{subarray}} \lambda^{-1}\int_{B\cap[\omega]}e^{\phi(x)}\d m\circledcirc\sigma\\
         &=& \lambda^{-1}\sum_{c\in S}\sum_{\omega\in\mathcal{W}_2(c)}\int_{B\cap[\omega]}e^{\phi(x)}\d m\circledcirc\sigma\\
         &=& \lambda^{-1}\int_{B}e^{\phi(x)}\d m\circledcirc\sigma.
	\end{eqnarray*}
Concluding that $m$ is a $(\phi,\lambda)$-conformal measure.
	
	Let us suppose that $m$ is a $(\phi,\lambda)$-conformal measure. From Equation $(\ref{iiG})$ we have that, for every $c\in S$ with $m\left([c]\right)>0,$
		\begin{equation*}
		\sum_{\sigma y=x}e^{\phi(y)}=\lambda,\quad m\mbox{-a.e. in }[c].
	\end{equation*}
	Therefore $L_{\phi}\mathbbm{1}(x)=\lambda$, $m$-a.e.
\end{proof}

The next example satisfies all conditions of Theorem \ref{por1G}.

%The following example we have a Markov shift, a potential and a infinite DLR measure satisfying the hypotheses the previous result.

\begin{exem}\label{exem3} Consider the topological Markov shift $\Sigma_{A}$ defined by the graph of Figure \ref{fig:linha}, and let $\phi:\Sigma_{A}\to \mathbb{R}$ given by $\phi\equiv 0$. Note that $\phi$ normalizes the Ruelle operator, i.e., $L_{\phi} \mathbbm{1}=1$.

\begin{figure}[!htb]
\begin{center}
		\begin{tikzpicture}[scale=1.5]
		\node[circle, draw=black, fill=white, inner sep=1pt,minimum size=5pt] (0) at (0,0) {0};
		\node[circle, draw=black, fill=white, inner sep=1pt,minimum size=5pt] (1) at (1,0) {1};
		\node[circle, draw=black, fill=white, inner sep=1pt,minimum size=5pt] (2) at (-1,0) {2};
		\node[circle, draw=black, fill=white, inner sep=1pt,minimum size=5pt] (3) at (2,0) {3};
		\node[circle, draw=black, fill=white, inner sep=1pt,minimum size=5pt] (4) at (-2,0) {4};
		\node[circle, draw=black, fill=white, inner sep=1pt,minimum size=5pt] (5) at (3,0) {5};
		\node[circle, draw=black, fill=white, inner sep=1pt,minimum size=5pt] (6) at (-3,0) {6};
		\node (i) at (-4,0) {$\ldots$};
		\node (f) at (4,0) {$\ldots$};
		\draw[->, >=stealth] (i)  to (6);
		\draw[->, >=stealth] (6)  to (4);
		\draw[->, >=stealth] (4)  to (2);
		\draw[->, >=stealth] (2)  to (0);
		\draw[->, >=stealth] (0)  to (1);
		\draw[->, >=stealth] (1)  to (3);
		\draw[->, >=stealth] (3)  to (5);
		\draw[->, >=stealth] (5)  to (f);
		\end{tikzpicture}
	\end{center}
	\caption{}\label{fig:linha}
\end{figure}
	
    For $i\ge 0$, consider the sequence	
	$$  x_{i}= \left\{
	\begin{array}{ll}
	(i,i+2,i+4\ldots),    & \mbox{ if }i \mbox{ is odd};\\
	(0,1,3,5,\ldots),      & \mbox{ if }i=0;\\
	(i,i-2,\ldots,i-2k+2,0,1,\ldots),                &\mbox{ if } i=2k,~k\ge 1;
	\end{array}
	\right.
	$$
	and a measure $m$ in $\mathcal{B}$ defined by $m(E)=\sum_{i\geq 0}\delta_{x_i}(E)$. Note that $m$ is $\sigma$-finite with $m\left(\Sigma_{A}\right)=\infty$. It is easy to see that $m$ is a $\sigma$-invariant $\phi$-\emph{DLR} measure. Thus, by Theorem \ref{por1G}, $m$ is a $(\phi,1)$-conformal measure.
	\end{exem}
	
It is known that when $|S|<\infty$ and the potential is Walters, the probability DLR measure is unique, see \cite{ANS}. The following result shows the uniqueness of the DLR measure when the topological Markov shift is non-compact and  satisfying the BIP property.  Example \ref{exem4} gives us a counter-example of nonuniqueness when the Markov shift does not satisfy the BIP property.

\begin{teo}\label{dlrbip}
Let $\Sigma_{A}$ be a topologically mixing Markov shift satifying BIP property and let $\phi:\Sigma_{A}\to \bbr$ be a potential satisfying the Walters condition,  $\Var_1\phi<\infty$ and $P_G(\phi)<\infty$. Then $\phi$ has a unique \emph{DLR} probability measure. In this case, the set of conformal probability measures and the set of \emph{DLR} probability measures coincide.
\end{teo}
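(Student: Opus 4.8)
The plan is to exploit the fact that, under the BIP property and $\Var_1\phi<\infty$, Sarig's theory gives a very rigid picture: $\phi$ is positive recurrent, the Ruelle operator acts nicely on a suitable Banach space, and there is a \emph{unique} $(\phi,\lambda)$-conformal probability measure $\nu$ (with $\lambda=e^{P_G(\phi)}$), a unique positive continuous eigenfunction $h$ bounded away from $0$ and $\infty$, and the RPF measure $m=h\,\d\nu$ is the unique equilibrium state. By Theorem \ref{Sa5} this $\nu$ is $\phi$-DLR, so the set of DLR probability measures is nonempty; what remains is uniqueness. So first I would recall/establish that in the BIP, $\Var_1\phi<\infty$ setting the conformal probability measure is unique — this is essentially \cite{Sa4} together with the spectral gap for $L_\phi$ on the Walters space — and that $\|L_\phi\mathbbm{1}\|_\infty<\infty$ (since $\Var_1\phi<\infty$ and BIP force $\sum_{\sigma y=x}e^{\phi(y)}$ to be uniformly bounded once one normalizes), so that Proposition \ref{fo1} and Theorem \ref{con-dlr} apply.

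The core of the argument is to go the other direction: take an arbitrary $\phi$-DLR probability measure $\mu$ and show it must be the conformal one. I would proceed as follows. First, $\mu$ is automatically non-singular: the DLR equations (\ref{dlrinf}) with $n=1$ express $\d\mu/\d(\mu\circledcirc\sigma)$ as $\lambda(x)^{-1}e^{\phi}$ where $\lambda(x)=L_\phi\mathbbm{1}(x)$, hence $\mu\ll\mu\circledcirc\sigma$ and, running the BIP structure, $\mu\circ\sigma^{-1}\sim\mu$. Next, since $\mu$ is a probability measure on a topologically mixing shift, it gives positive mass to some cylinder $[c]$; by Lemma \ref{composG} (the last clause), once $\mu([c])>0$ for one $c$, transitivity/BIP propagates positivity of $\mu$ on all one-cylinders $[a]$, $a\in S$ — here the BIP words $b_1,\dots,b_N$ are used to connect any two states in a bounded number of steps, and the Radon–Nikodym identities of Lemma \ref{composG}(i)–(iii) transport positive mass along these connections. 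In particular $\mu([\pi_n^{-1}\{a\}])$ is handled exactly as in Proposition \ref{fo1}: from the DLR equations one gets $\lambda^n\mu([w_0\cdots w_{n-1}a]) = \int_{[a]} e^{\phi_n(w_0\cdots w_{n-1}x)}\,\d\mu$ pointwise-a.e. in the relevant sense, sum over the admissible prefixes, and bound by $\|L_\phi\mathbbm 1\|_\infty^n \mu([a])<\infty$, so every $\sigma^{-n}\mathcal B$ is compatible with $\mu$.

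Now I would invoke Proposition \ref{Rus1G}: for $\mu$ a non-singular DLR measure and $A(a,c)=A(b,c)=1$ with $\mu([ac])>0$, the ratio $\big(\tfrac{\d\mu}{\d\mu\circledcirc\sigma}(ax)\big)/e^{\phi(ax)}$ is independent of the first symbol, $\mu$-a.e. on $[c]$. Feeding this into Proposition \ref{prop:transferencia} is not available since $\mu$ need not be $\sigma$-invariant, so instead I would argue that the family of conditional expectations $\mathbb E_\mu[\mathbbm 1_{[\underline a]}\mid\sigma^{-n}\mathcal B]$ determines $\mu$ on the algebra of cylinders: fixing a reference cylinder of positive mass and using (\ref{vab}) from Proposition \ref{resu2}, all ratios $\mu([\underline a])/\mu([\underline b])$ for $\underline a,\underline b$ with a common last symbol are pinned down by $\phi$. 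Combining this with the BIP-based spatial mixing (the Ruelle operator's spectral gap gives exponential decay of the quantities controlling $\mathbb E_\mu[\mathbbm 1_{[\underline a]}\mid\sigma^{-n}\mathcal B]$ uniformly, hence a Dobrushin-type uniqueness), two DLR probability measures must assign the same value to every cylinder, so they coincide on $\mathcal B$. Finally, applying Theorem \ref{con-dlr} to the unique DLR measure (its $\pi_n^{-1}\{a\}$ have finite mass and it is non-singular and — one must check — $(\phi,\lambda)$-conformal, which follows because uniqueness forces it to equal Sarig's $\nu$) yields that it is $(\phi,\lambda)$-conformal, and conversely every conformal probability measure is DLR by Theorem \ref{Sa5}; hence the two sets coincide and each is a singleton.

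The main obstacle is the uniqueness step itself: unlike the finite-alphabet case, one cannot simply quote a known ``unique Gibbs measure'' theorem, and a DLR probability measure is not assumed $\sigma$-invariant, so Proposition \ref{prop:transferencia} does not directly apply. The delicate point is converting the pointwise structural identities (Propositions \ref{Rus1G}, \ref{resu2}) together with the BIP property into a genuine uniqueness statement — morally a Dobrushin uniqueness argument — controlling the tail fields $\sigma^{-n}\mathcal B$ uniformly in $n$; this is where $\Var_1\phi<\infty$ (not merely Walters) is essential, since it is exactly what makes $\|L_\phi\mathbbm 1\|_\infty<\infty$ and gives the uniform estimates on the specification kernels.
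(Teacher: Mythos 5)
Your existence half (Sarig's RPF measure under BIP is conformal, hence DLR by Theorem \ref{Sa5}) matches the paper, but the uniqueness half --- which is the whole content of the theorem --- is not actually proved in your proposal, and the mechanism you sketch does not work as stated. The claim that Proposition \ref{resu2} ``pins down'' the ratios $\mu([\underline a])/\mu([\underline b])$ is false: equation (\ref{vab}) gives $\mu([\underline b])=\int_{[\underline a]}e^{\phi_n(\underline b x_n^{\infty})-\phi_n(\underline a x_n^{\infty})}\,\d\mu(x)$, an integral over the tail coordinates inside $[\underline a]$, so the ratio depends on how $\mu$ distributes mass over the tail --- precisely the information the DLR equations never determine (this is why Example \ref{exem4} has extra DLR measures without BIP). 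The subsequent appeal to ``spectral gap $\Rightarrow$ Dobrushin-type uniqueness'' is a wish rather than an argument: no estimate is offered that would let you compare two DLR probability measures on cylinders, and you yourself flag this step as the open obstacle. So the proposal has a genuine gap exactly at the crucial point.

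The paper's route is different and much more direct, and does not need non-singularity of $\mu$, Proposition \ref{Rus1G}, or any mixing/spectral input beyond the RPF data. Writing $M:=\sup_{n}\Var_{n+1}\phi_n+\Var_1\phi$, one bounds the numerator of the DLR kernel by integrating over $\sigma^n[\underline a]$ against $\nu$: $\lambda^{-n}e^{\phi_n(\underline a\sigma^n x)}\mathbbm{1}_{\{\underline a\sigma^n x\in\Sigma_A\}}\le e^{M}\nu([\underline a])/\nu(\sigma^n[\underline a])$, and BIP gives the uniform lower bound $\nu(\sigma^n[\underline a])\ge K:=\min_i\nu([b_i])>0$; the denominator is comparable to $\lambda^n$ because $L_\phi^n h=\lambda^n h$ with $H_1\le h\le H_2$. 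Hence the specification kernel itself is dominated by $C\,\nu([\underline a])$ uniformly in $x$ and $n$. Integrating the DLR equation against an arbitrary DLR probability measure $\mu$ yields $\mu([\underline a])\le C\,\nu([\underline a])$ for every cylinder, so $\mu\ll\nu$, and then the argument of Theorem 3.6 in \cite{Sa5} (exactness of $\nu$) forces $\d\mu/\d\nu\equiv 1$. If you want to salvage your outline, this domination-plus-absolute-continuity step is what must replace the ``ratios are pinned down'' and ``Dobrushin'' paragraphs; the rest of your structure (existence and the final identification of the DLR measure with the conformal one) can stand.
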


\begin{proof}
Note that, by our assumptions and Proposition 3.8 in \cite{Sa5}, the potential $\phi$ is positively recurrent.	 
By Generalized Ruelle-Perron-Frobenius Theorem for topological mixing satisfying BIP property, there exist a non-singular probability measure $\nu$ and a continuous function $h:\Sigma_{A}\to \bbr$ such that $h$ is uniformly bounded away from zero and infinity, $L_{\phi}h=\lambda h$ and $L^*_{\phi}\nu=\lambda\nu$, where $\lambda=e^{P_{G}(\phi)}$.

Let $\mu$ be a $\phi$-DLR measure. We claim $\mu=\nu$. Note that it is enough to show $\mu \ll \nu$. In fact, by the same argument for Theorem 3.6 of \cite{Sa5}, we know that $\varphi:=\frac{\d\mu}{\d\nu}$ is a constant function equals to 1 $\nu$-a.e.

Let $n\in \mathbb{N}$ and $[\underline{a}]$ be a cylinder of length $n$. Define $M:=\sup_{n\ge 1} \Var_{n+1}\phi_{n}  + \Var_1\phi$. Then
\begin{equation*}
|\phi_n(\xi)-\phi_n(\eta)|\leq M \quad \text{for every }\xi,\eta\in[\underline{a}].
\end{equation*}
For a fixed $x\in \Sigma_{A}$, we have	
\begin{equation*}
e^{\phi_n(\underline{a}\sigma^nx)}\mathbbm{1}_{\{\underline{a}\sigma^nx\in \Sigma_{A}\}}\leq e^Me^{\phi_n(\underline{a}y)} \qquad\text{for every } y\in\sigma^n[a].
\end{equation*}
	Integrating with respect to the measure $\nu$,	
	\begin{eqnarray*}
		\int_{\sigma^n[\underline{a}]}e^{\phi_n(\underline{a}\sigma^nx)}\mathbbm{1}_{\{\underline{a}\sigma^nx\in \Sigma_{A}\}}\d\nu(y)
		\leq e^M\int_{\Sigma_{A}} L_{\phi}^n\mathbbm{1}_{[\underline{a}]}(y)\d\nu(y) 
		=e^M\lambda^n\nu\left([\underline{a}]\right).
	\end{eqnarray*}
Therefore, 
	\begin{eqnarray}\label{reg01}
	\lambda^{-n}e^{\phi_n(\underline{a}\sigma^nx)}\mathbbm{1}_{\{\underline{a}\sigma^nx\in \Sigma_{A}\}}\leq\frac{1}{\nu\big(\sigma^n[\underline{a}]\big)} e^M\nu\left([\underline{a}]\right) \qquad \text{for every } x\in \Sigma_{A}.
	\end{eqnarray}
	By BIP property, there exist $b_1,b_2,\ldots,b_N \in S$ such that
	\be\label{medidabip}
	\nu\big(\sigma^n[\underline{a}]\big)\geq \min\left\{\nu([b_i]):~i=1,\ldots, N\right\}:=K>0.
	\ee
	Replacing Inequality (\ref{medidabip}) in (\ref{reg01}), we obtain
	\begin{equation}\label{reg02}
	\lambda^{-n}e^{\phi_n(\underline{a}\sigma^nx)}\mathbbm{1}_{\{\underline{a}\sigma^nx\in \Sigma_{A}\}}\leq\frac{e^M}{K}\nu\left([\underline{a}]\right) \qquad \text{for every } x\in \Sigma_{A}.
	\end{equation} 
	Let $H_1$ and $H_2$ be positive real-valued numbers satisfying $H_1\le h\le H_2$. We have the following bounds for every $n\ge 1$,
	\begin{equation}\label{reg03}
	\frac{H_1}{H_2}\leq\lambda^{-n}\sum_{\sigma^ny=\sigma^nx}e^{\phi_n(y)}\leq\frac{H_2}{H_1}
	\end{equation}	
for every $x\in \Sigma_{A}$.	

From Inequalities $(\ref{reg02})$ and $(\ref{reg03})$ we obtain that, for every $n\ge 1$ and every cylinder $[\underline{a}]$ of length $n$, 
\be\label{dlrconforme}
\frac{e^{\phi_n(\underline{a}\sigma^nx)}\mathbbm{1}_{\{\underline{a}\sigma^nx\in \Sigma_{A}\}}}{\displaystyle\sum_{\sigma^ny=\sigma^nx}e^{\phi_n(y)}}\leq C \nu\left([\underline{a}]\right) \quad \text{for every }x\in \Sigma_{A},
\ee
where $C=\frac{H_1e^M}{H_2K}>0$. Since $\mu$ is $\phi$-DLR, integrating Inequality (\ref{dlrconforme}) with respect to $\mu$, we have $\mu\left([\underline{a}]\right)\leq C\nu\left([\underline{a}]\right)$ for every cylinder $[\underline{a}]$ of length $n$. Since $n$ is an arbitrary number, we conclude $\mu\ll\nu$.
\end{proof}

There are topological Markov shift that does not satisfy the BIP property and having more than one DLR measure associated to the same potential, as we will see below. Also the following example shows a particular topological Markov shift in which there is a DLR measure which is not a conformal measure. This example was based on Example A.1 of \cite{Sw}.

\begin{exem}\label{exem4} Consider the topological Markov shift $\Sigma_{A}$ defined by the graph of Figure \ref{fig:renin}. Let $\phi:\Sigma_{A}\to \bbr$ be a potential satisfying the Walters condition, $\Var_1\phi<\infty$ and $P_G(\phi)<\infty$. Note that~$\Sigma_{A}$ does not satisfy the BIP property.

\begin{figure}[!htb]
	\begin{center}
		\begin{tikzpicture}[scale=1.5]
		\draw   (0,0) -- (7,0);
		\node[circle, draw=black, fill=white, inner sep=1pt,minimum size=5pt] (1) at (0,0) {1};
		\node[circle, draw=black, fill=white, inner sep=1pt,minimum size=5pt] (2) at (1,0) {2};
		\node[circle, draw=black, fill=white, inner sep=1pt,minimum size=5pt] (3) at (2,0) {3};
		\node[circle, draw=black, fill=white, inner sep=1pt,minimum size=5pt] (4) at (3,0) {4};
		\node[circle, draw=black, fill=white, inner sep=1pt,minimum size=5pt] (5) at (4,0) {5};
		\node[circle, draw=black, fill=white, inner sep=1pt,minimum size=5pt] (6) at (5,0) {6};
		\node[circle, draw=black, fill=white, inner sep=1pt,minimum size=5pt] (7) at (6,0) {7};
		\node (8) at (7.3,0) {};
		\draw[->, >=stealth] (1)  to (2);
		\draw[->, >=stealth] (2)  to (3);
		\draw[->, >=stealth] (3)  to (4);
		\draw[->, >=stealth] (4)  to (5);
		\draw[->, >=stealth] (5)  to (6);
		\draw[->, >=stealth] (6)  to (7);
		\draw[->, >=stealth] (7)  to (8);
		\node [minimum size=10pt,inner sep=0pt,outer sep=0pt] {} edge [in=200,out=100,loop, >=stealth] (0);
		%\draw[->, >=stealth] (1)  to [out=90,in=90, looseness=1] (1);
		\draw[->, >=stealth] (2)  to [out=90,in=90, looseness=1] (1);
		\draw[->, >=stealth] (3)  to [out=90,in=90, looseness=1] (1);
		\draw[->, >=stealth] (4)  to [out=90,in=90, looseness=1] (1);
		\draw[->, >=stealth] (5)  to [out=90,in=90, looseness=1] (1);
		\draw[->, >=stealth] (6)  to [out=90,in=90, looseness=1] (1);
		\draw[->, >=stealth] (7)  to [out=90,in=90, looseness=1] (1);
		\end{tikzpicture}
	\end{center}
	\caption{}\label{fig:renin}
\end{figure}

Let  $\overline{x}=(1,2,3,\ldots)\in \Sigma_{A}$, and let us consider the probability measure $\mu:=\delta_{\overline{x}}$. It is easy to see that $\mu$ is $\phi$-\emph{DLR} measure. Since $\mu$ satisfies $\mu(\sigma^{-1}[1] )=0$ and $\mu\circledcirc\sigma(\sigma^{-1}[1])>0$, we have
\begin{equation*}
\mu\big(\sigma^{-1}[1] \big)\neq \lambda^{-1}\int_{\sigma^{-1}[1]}e^{\phi}\d\mu\circledcirc\sigma
\end{equation*}
for every $\lambda>0$. Therefore, there is no $\lambda>0$ such that $\mu$ is $(\phi,\lambda)-$conformal.
\end{exem}

\begin{note}\label{Remark2} The study of the existence of conservative conformal measures for topological Markov shift spaces is determined by the potential recurrence modes, see Theorem \ref{teo-RPF} and Theorem \ref{RPFt}.As we saw in Example \ref{exem4}, independently of the recurrence mode of the potential, there exists a DLR measure. We conclude that Example \ref{exem4} is an example where the set of \emph{DLR} probability measures is strictly larger than the set of conformal probability measures. In addition,  $\delta_{\overline{x}}$ is a probability DLR measure for any potential.
\end{note}

% % % % % % % % % % % % % % % % % % % % % % % % % % % % % % % % % % % % % % % %
% % % % % % % % % % % % % % % % % % % % % % % % % % % % % % % % % % % % % % % % 

We will finish this section by studying the connection between $\sigma$-invariant DLR measures and equilibrium measures. These two notions of measures are widely studied for the space $S^{\mathbb{Z}^d}$ and where it is known that they are equivalent, see \cite{Do2,Ke,Ki,LaRu,Mu}. 

\begin{prop}\label{prop-dlrinv}
Let $\Sigma_{A}$ be a topologically mixing Markov shift and $\phi:\Sigma_{A}\to\bbr$ be a recurrent potential satisfying the Walters condition and $P_G(\phi)<\infty$. Consider $m:=h\d\nu$ where $h$ is a positive continuous function and $\nu$ is a measure such that $L_{\phi}h=e^{P_G(\phi)}h$ and $L^{*}_{\phi}\nu=e^{P_G(\phi)}\nu$. Then $m$ is $\phi$-\emph{DLR} if, and only if, $h$ is constant $\nu$-a.s.
\end{prop}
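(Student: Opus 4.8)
The plan is to use the characterization of conformal measures among $\sigma$-invariant DLR measures given by Theorem \ref{por1G}, which states that a $\sigma$-invariant $\phi$-DLR measure $m$ is $(\phi,\lambda)$-conformal if and only if $L_\phi\mathbbm 1=\lambda$ $m$-a.e. First I would record that $m=h\,\d\nu$ is automatically $\sigma$-invariant: this is exactly Sarig's construction (as stated after Theorem \ref{teo-RPF}), and by Proposition \ref{fo1} combined with $\|L_\phi\mathbbm 1\|_\infty<\infty$ — which follows from $L_\phi h=e^{P_G(\phi)}h$ with $h$ bounded away from $0$ and $\infty$ on cylinders — the event $\pi_n^{-1}\{a\}$ has finite $m$-mass, so $m$ is indeed a legitimate $\sigma$-finite $\phi$-DLR candidate; actually, by Corollary \ref{coro-Cyr}, $m$ is a $\widetilde\phi$-DLR measure for the normalized potential $\widetilde\phi:=\phi+\log h-\log h\circ\sigma-P_G(\phi)$, which is the potential whose Ruelle operator we should look at.

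The key computation is to identify $L_{\widetilde\phi}\mathbbm 1$. Since $\widetilde\phi(y)=\phi(y)+\log h(y)-\log h(\sigma y)-P_G(\phi)$, for any $x$ we get
\begin{equation*}
L_{\widetilde\phi}\mathbbm 1(x)=\sum_{\sigma y=x}e^{\widetilde\phi(y)}=e^{-P_G(\phi)}\frac{1}{h(x)}\sum_{\sigma y=x}e^{\phi(y)}h(y)=e^{-P_G(\phi)}\frac{L_\phi h(x)}{h(x)}=1,
\end{equation*}
using $L_\phi h=e^{P_G(\phi)}h$. So $\widetilde\phi$ \emph{always} normalizes its own Ruelle operator, hence by Theorem \ref{por1G} (applied to the measure $m$, the potential $\widetilde\phi$, and $\lambda=1$) $m$ is a $(\widetilde\phi,1)$-conformal measure — unconditionally. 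The content of the proposition therefore concerns the relation between $\widetilde\phi$-conformality and $\phi$-DLR (equivalently $\phi$-conformality up to the eigenfunction twist): I would argue that $m$ is $\phi$-DLR if and only if $m$ is $(\phi,e^{P_G(\phi)})$-conformal, and then show that a measure cannot be simultaneously $(\widetilde\phi,1)$-conformal and $(\phi,e^{P_G(\phi)})$-conformal unless $h$ is $\nu$-a.e.\ constant. Concretely, $\nu$ itself is $(\phi,e^{P_G(\phi)})$-conformal, so $\frac{\d m}{\d m\circledcirc\sigma}=h\,e^{-P_G(\phi)}e^\phi\,\frac{1}{h\circ\sigma}\cdot(\text{correction})$; comparing with the $(\phi,\lambda)$-conformality requirement $\frac{\d m}{\d m\circledcirc\sigma}=\lambda^{-1}e^\phi$ forces $h(y)=h(\sigma y)$ for $\nu\circledcirc\sigma$-a.e.\ $y$, and transitivity/conservativity of $\nu$ then propagates this to $h$ being constant $\nu$-a.e.

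For the converse, if $h$ is constant $\nu$-a.e., then $\widetilde\phi$ and $\phi-P_G(\phi)$ agree $\nu$-a.e.\ (hence $m$-a.e., since $m\sim\nu$), so the $\widetilde\phi$-DLR property of $m$ from Corollary \ref{coro-Cyr} immediately gives the $(\phi-P_G(\phi))$-DLR property, which is the same as the $\phi$-DLR property because the DLR equations \eqref{dlrinf} are insensitive to adding constants to $\phi$ (the constant cancels between numerator and denominator). The step I expect to be the main obstacle is the forward direction: carefully extracting from "$m$ is $\phi$-DLR" the pointwise identity $h=h\circ\sigma$ $\nu$-a.e.\ and then upgrading it to "$h$ constant" — this is where I need to invoke that $\nu$ is conservative (Theorem \ref{teo-RPF}) together with topological transitivity of $\Sigma_A$, mimicking the standard argument that a $\sigma$-invariant function on an ergodic/conservative system is a.e.\ constant. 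I would also double-check the measure-theoretic bookkeeping around $\circledcirc$ versus $\sigma^{-1}$ (via Lemma \ref{composG} and Proposition \ref{prop:transferencia}) so that the Radon–Nikodym manipulations are valid on each cylinder where $m$ is positive.
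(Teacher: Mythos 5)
Your normalization computation $L_{\widetilde\phi}\mathbbm 1=1$ and the resulting $(\widetilde\phi,1)$-conformality (hence $\widetilde\phi$-DLR property) of $m$ are correct and match the paper's first step. The genuine gap is in the forward direction, at the pivot you flag only in passing: you assert that ``$m$ is $\phi$-DLR if and only if $m$ is $(\phi,e^{P_G(\phi)})$-conformal.'' The ``conformal $\Rightarrow$ DLR'' half is Theorem \ref{con-dlr}, but the half you actually need, ``$\phi$-DLR $\Rightarrow$ $(\phi,e^{P_G(\phi)})$-conformal,'' is unsupported: the only tool available, Theorem \ref{por1G}, converts the DLR property into conformality precisely when $L_\phi\mathbbm 1$ is constant $m$-a.e., which is neither a hypothesis here nor deducible from $L_\phi h=e^{P_G(\phi)}h$ unless $h$ is already known to be constant --- i.e.\ this intermediate claim is essentially equivalent to the conclusion you are trying to prove. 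Everything downstream (comparing the two Radon--Nikodym derivatives to get $h=h\circ\sigma$ a.e., then conservativity/ergodicity) is fine, but it hangs on this missing step; the obstacle is not the upgrade from $h=h\circ\sigma$ to $h$ constant, as you suggest, but getting any identity on $h$ out of the $\phi$-DLR hypothesis in the first place. The paper closes exactly this gap without ever passing through $\phi$-conformality: it applies Proposition \ref{resu2} to \emph{both} potentials $\phi$ and $\widetilde\phi$, equates the two resulting formulas for $\d m\circ v_{\underline a,\underline b}/\d m$ on cylinders with common last symbol, deduces $h(\underline a x_n^\infty)=h(\underline b x_n^\infty)$ by continuity of $h$, hence that $h$ is $\sigma^{-n}\calb$-measurable for every $n$, and then invokes exactness of $\nu$ (Theorem 2.5 in \cite{Sa5}) to conclude $h$ is constant $\nu$-a.s.

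Two secondary points. First, your claim that $\|L_\phi\mathbbm 1\|_\infty<\infty$ follows from $L_\phi h=e^{P_G(\phi)}h$ with $h$ bounded away from $0$ and $\infty$ on cylinders is false in general (the bounds on $h$ are only local, and $L_\phi\mathbbm 1$ can be unbounded); fortunately it is not needed, since compatibility for $m$ follows from $\sigma$-invariance, $m(\pi_n^{-1}\{a\})=m(\sigma^{-n}[a])=m([a])<\infty$. Second, in the converse direction, ``$\widetilde\phi$ and $\phi-P_G(\phi)$ agree $m$-a.e., hence the DLR equations transfer'' is too quick: the DLR expression evaluates $\phi_n$ at the preimages $y$ with $\sigma^n y=\sigma^n x$, which need not be $m$-typical points, so a.e.\ agreement of the potentials does not by itself let you substitute one for the other. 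The clean fix is the paper's: if $h=\alpha$ $\nu$-a.s., then $m=\alpha\nu$ is $(\phi,e^{P_G(\phi)})$-conformal because $\nu$ is, and Theorem \ref{con-dlr} (with the compatibility noted above) gives the $\phi$-DLR property directly.
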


\begin{proof}
Let $\widetilde{\phi}:=\phi+\log h-\log h\circ \sigma -P_G(\phi)$. It is easy to verify that
$$\int f\d m=\int L_{\widetilde{\phi}}f\d m,\quad\mbox{for all }f\in L^1(m),$$
thus $m$ is a $(\widetilde{\phi},1)$-conformal measure. Since $\|L_{\widetilde{\phi}}\mathbbm{1}\|_{\infty}<\infty$, by Theorem \ref{con-dlr} we conclude that $m$ is a $\widetilde{\phi}$-DLR measure. 

Assume that $m$ is a $\phi$-DLR measure. For every $n\geq 2$ and $c\in S$,  consider two words of size $n$ $\underline{a}:=a_0,a_1,\ldots,a_{n-1}$ and $\underline{b}:=b_0,b_1,\ldots,b_{n-1}$  such that $a_{n-1}=b_{n-1}=c$. Since, by Proposition \ref{resu2}, $m$ is $\widetilde{\phi}$-DLR,
\be\label{m1}
m\circ v_{\underline{a},\underline{b}}\big(E\big)=\int_Ee^{\phi_n(\underline{b}x_n^{\infty})-\phi_n(\underline{a}x_n^{\infty})}\d m(x),
\ee
\be\label{m2}
m\circ v_{\underline{a},\underline{b}}\big(E\big)=\int_Ee^{\widetilde{\phi}_n(\underline{b}x_n^{\infty})-\widetilde{\phi}_n(\underline{a}x_n^{\infty})}\d m(x)
\ee
for every $E\in\calb\cap[\underline{a}]$. By $(\ref{m1})$ and $(\ref{m2})$,
$$\int_Ee^{\phi_n(\underline{b}x_n^{\infty})-\phi_n(\underline{a}x_n^{\infty})}\bigg(1-\frac{h(\underline{b}x_n^{\infty})}{h(\underline{a}x_n^{\infty})}\bigg)\d m(x)=0$$
for every $E\in\calb\cap[\underline{a}]$. By the continuity of the function $h$ we have $h(\underline{a}x_n^{\infty})=h(\underline{b}x_n^{\infty})$ for all $x\in[\underline{a}]$ such that $a_{n-1}=b_{n-1}=c$.

Since $c\in S$, $\underline{a}$ and $\underline{b}$ were chosen arbitrarily such that $a_{n-1}=b_{n-1}=c$, we conclude that $h$ is $\sigma^{-n}\calb$-measurable, and so $h$ is $\bigcap_{n\geq 1}\sigma^{-n}\calb$-measurable. By Theorem 2.5 in \cite{Sa5} we know that $\nu$ is exact, concluding that $h$ is constant $\nu$-a.s.

Now let us prove the converse. Suppose $h(x)=\alpha$, $\nu$-a.s., for some $\alpha\in\mathbb{R}$. Since $\nu$ is $(\phi,e^{P_G(\phi)})$-conformal and $m=\alpha\nu$, we have that $m$ is also $(\phi,e^{P_G(\phi)})$-conformal. Note that
$$m\big(\pi^{-1}_n\{a\}\big)=m([a])<\infty\quad \text{for all } a\in S,~ n\in\bbn,$$
implying that the sub-$\sigma$-algebra $\sigma^{-n}\calb$ is compatible with $m$ for every $n\geq1$. By Theorem \ref{con-dlr}, $m$ is a $\phi$-DLR measure.
\end{proof}

\begin{teo}\label{teo-dlrinv}
Let $\Sigma_{A}$ be a topologically mixing Markov shift and let $\phi:\Sigma_{A}\to \bbr$ be a potential satisfying the Walters condition and $P_G(\phi)<\infty$. Then:
\begin{itemize}
\item[i)] If $\Sigma_{A}$ has the BIP property and  $\Var_1\phi<\infty$ then
 $$
 \{\mu \in \mathcal{M}^1_{\sigma}(\Sigma_A): \mu~\mbox{is}~\phi\mbox{-}\emph{DLR}~\mbox{and}~h_{\mu}(\sigma)<\infty\}\subseteq\{\mu\in \mathcal{M}^1_{\sigma}(\Sigma_A):\mu~\mbox{is}~\phi\mbox{-}\mbox{equilibrium}\}.
 $$
\item[ii)] If $\sup \phi<\infty$ then
$$
\{\mu \in \mathcal{M}^1_{\sigma}(\Sigma_A):\mu~\mbox{is}~\phi\mbox{-}\mbox{equilibrium}\}\subseteq\{\mu \in \mathcal{M}^1_{\sigma}(\Sigma_A):\mu~\mbox{is}~\widetilde{\phi}\mbox{-}\emph{DLR}\},
$$
\end{itemize}
where $\widetilde{\phi}=\phi+\log h-\log h\circ\sigma- P_G(\phi)$.
\end{teo}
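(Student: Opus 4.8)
The plan is to treat the two inclusions separately, since they rely on genuinely different machinery. For item i), start from a measure $\mu\in\mathcal{M}^1_\sigma(\Sigma_A)$ that is $\phi$-DLR with $h_\mu(\sigma)<\infty$. By Proposition~3.8 in \cite{Sa5} the potential $\phi$ is positively recurrent under the stated hypotheses, so the Generalized Ruelle--Perron--Frobenius Theorem (Theorem~\ref{teo-RPF}) supplies $\lambda=e^{P_G(\phi)}$, a continuous eigenfunction $h$ bounded away from $0$ and $\infty$, and a non-singular conformal eigenmeasure $\nu$ with $L_\phi^*\nu=\lambda\nu$. The key step is Theorem~\ref{dlrbip}: since $\Sigma_A$ has the BIP property and $\Var_1\phi<\infty$, the $\phi$-DLR probability measure is unique and coincides with the normalized conformal measure, which is exactly the RPF measure $m=h\,\d\nu$ (after normalization). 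Hence $\mu=m$, and Sarig's result quoted after Theorem~\ref{teo-RPF} says that the RPF measure, when it has finite entropy, is the unique equilibrium measure for $\phi$. So $\mu$ is $\phi$-equilibrium, which is the desired inclusion.

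For item ii), take $\mu\in\mathcal{M}^1_\sigma(\Sigma_A)$ that is $\phi$-equilibrium, i.e. $P_G(\phi)=h_\mu(\sigma)+\int\phi\,\d\mu$ (this makes sense because $\sup\phi<\infty$, so the variational principle (\ref{equi-meas}) applies). The strategy is to show $\mu$ is a $(\widetilde\phi,1)$-conformal measure and then invoke Theorem~\ref{con-dlr}. One route: under $\sup\phi<\infty$ an equilibrium state is, by the theory (e.g. via the Ledrappier-type characterization / Gibbs property from Sarig or Buzzi-Sarig), a measure whose conditional structure is governed by $\phi-P_G(\phi)$; concretely one shows that $\mu$ satisfies $\int f\,\d\mu=\int L_{\widetilde\phi}f\,\d\mu$ for $f\in L^1(\mu)$, using that $\widetilde\phi=\phi+\log h-\log h\circ\sigma-P_G(\phi)$ normalizes the Ruelle operator ($L_{\widetilde\phi}\mathbbm 1=1$) and that the entropy is maximized. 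Since $\|L_{\widetilde\phi}\mathbbm 1\|_\infty=1<\infty$ and $\mu(\pi_n^{-1}\{a\})=\mu([a])<\infty$ for all $n,a$ (because $\mu$ is a $\sigma$-invariant probability measure, so each $\sigma^{-n}\mathcal{B}$ is automatically compatible), Theorem~\ref{con-dlr} gives that $\mu$ is $\widetilde\phi$-DLR.

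The main obstacle is the second inclusion: producing the identity $L_{\widetilde\phi}^*\mu=\mu$ from the variational equality alone. This is the classical ``equilibrium $\Rightarrow$ Gibbs/conformal'' direction, and in the countable-alphabet setting without BIP it requires care — one needs $\sup\phi<\infty$ precisely so that the entropy term is well-behaved and so that the Rokhlin formula $h_\mu(\sigma)=\int \log\big(\tfrac{\d\mu}{\d\mu\circledcirc\sigma}\big)^{-1}\,\d\mu$ (Proposition~\ref{prop:transferencia} gives $\sum_{\sigma y=x}\tfrac{\d\mu}{\d\mu\circledcirc\sigma}(y)=1$) can be combined with a Jensen/convexity argument to force $\tfrac{\d\mu}{\d\mu\circledcirc\sigma}=e^{\widetilde\phi}$ $\mu$-a.e. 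I expect the bulk of the work, and the only genuinely delicate point, to be justifying this a.e.\ identification; once it is in hand, Theorem~\ref{con-dlr} closes the argument mechanically. Item i) by contrast is essentially a bookkeeping exercise assembling Theorem~\ref{dlrbip} with the uniqueness of the equilibrium measure.
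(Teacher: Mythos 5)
Your plan for item i) contains a genuine error at its key step. Under the BIP hypotheses, the unique $\phi$-DLR probability measure produced by Theorem \ref{dlrbip} is the eigenmeasure $\nu$ (the $(\phi,\lambda)$-conformal measure with $L^*_\phi\nu=\lambda\nu$), and this is \emph{not} in general the RPF measure $m=h\,\d\nu$, even after normalization: the two coincide precisely when the eigenfunction $h$ is constant $\nu$-a.e., which is exactly the content of Proposition \ref{prop-dlrinv}. So your assertion that the unique DLR measure ``is exactly the RPF measure $m=h\,\d\nu$ (after normalization)'' fails whenever $h$ is non-constant, and note that your argument never uses the $\sigma$-invariance of $\mu$, which is the hypothesis that must force the identification with $m$. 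The paper does not use Theorem \ref{dlrbip} as a black box: it reruns the Gibbs-type estimate from that proof with $m$ on the right-hand side (possible because $H_1\le h\le H_2$ under BIP), obtaining $\mu([\underline{a}])\le C\,m([\underline{a}])$ for every cylinder, hence $\mu\ll m$; then, since $m$ is ergodic and $\mu$ is $\sigma$-invariant, $\mu=m$, and $m$ with $h_m(\sigma)=h_\mu(\sigma)<\infty$ is the unique equilibrium measure. Your route could be repaired (from $\mu=\nu$ and invariance of $\mu$ one gets that $\nu$ is $\sigma$-invariant, Theorem \ref{por1G} then yields $L_\phi\mathbbm{1}=\lambda$ a.e., and uniqueness of the positive eigenfunction forces $h$ constant, so $\nu=m$), but that extra argument is missing, and without it the step is false, not bookkeeping.

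For item ii) your reduction (show $L^*_{\widetilde{\phi}}\mu=\mu$, note $\|L_{\widetilde{\phi}}\mathbbm{1}\|_\infty=1$ and automatic compatibility for probability measures, then apply Theorem \ref{con-dlr}) is the right skeleton, but you leave the crucial implication ``equilibrium $\Rightarrow$ $(\widetilde{\phi},1)$-conformal'' as an acknowledged obstacle, sketching a Rokhlin-formula/Jensen argument without carrying it out. The paper closes this gap much more cheaply: when $\sup\phi<\infty$, an equilibrium measure, if it exists, must equal the RPF measure $m=h\,\d\nu$ (the uniqueness recalled after Theorem \ref{teo-RPF}), and then $L^*_{\widetilde{\phi}}m=m$ follows by a direct computation from $L^*_\phi\nu=e^{P_G(\phi)}\nu$ and $L_\phi h=e^{P_G(\phi)}h$, after which Theorem \ref{con-dlr} applies. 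Your Ledrappier-type route is essentially the proof of that uniqueness statement, so it is not wrong in spirit, but as submitted the delicate step of both items remains unproved.
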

\begin{proof} Let us prove item i). By Lemma $4$ in \cite{BMP} we have $\sup\phi<\infty$. By Theorem $1$ in \cite{Sa4} the potential $\phi$ is positive recurrent, let $m=h\d\nu$, where $h$ is a positive continuous function and $\nu$ is a finite measure such that $L_{\phi}h=e^{P_G(\phi)}h$ and $L^{*}_{\phi}\nu=e^{P_G(\phi)}\nu$. Consider $\mu$ be a  $\sigma$-invariant $\phi$-DLR measure such that $h_{\mu}(\sigma)<\infty$. Let $n\in\bbn$ and $[\underline{a}]$ be a cylinder of length $n$. Let $H_1,H_2>0$ the positive constants such that $H_1<h(x)<H_2$ for every $x\in\Sigma_{A}$ and $M:=\sup_{n\ge 1} \Var_{n+1}\phi_{n}  + \Var_1\phi$. By the same argument in the proof of Theorem \ref{dlrbip}, we have
\begin{equation}\label{reg02-e}
	\lambda^{-n}e^{\phi_n(\underline{a}\sigma^nx)}\mathbbm{1}_{\{\underline{a}\sigma^nx\in {\Sigma_{A}}\}}\leq\frac{e^M}{H_1K}m\left([\underline{a}]\right)\quad \text{for all } x\in {\Sigma_{A}},
	\end{equation} 
	where $K>0$ is given by $(\ref{medidabip})$. By $(\ref{reg03})$ and $(\ref{reg02-e})$, 
%given $n\in\bbn$ and $[\underline{a}]$ a cylinder of size $n$,
\be\label{e-c}
\frac{e^{\phi_n(\underline{a}\sigma^nx)}\mathbbm{1}_{\{\underline{a}\sigma^nx\in {\Sigma_{A}}\}}}{\displaystyle\sum_{\sigma^ny=\sigma^nx}e^{\phi_n(y)}}\leq Cm\left([\underline{a}]\right)\quad \text{for all } x\in {\Sigma_{A}},
\ee
where $C=\frac{e^M}{H_2K}$. Since $\mu$ is $\phi$-DLR, integrating (\ref{e-c}) in both sides with respect to $\mu$, we have $\mu\left([\underline{a}]\right)\leq Cm\left([\underline{a}]\right)$ for every cylinder $[\underline{a}]$ of length $n$, concluding $\mu\ll m$. Since $m$ is an ergodic measure, by Theorem $4.7$ in \cite{Sa5}, we conclude $\mu=m$.

To prove item ii), note that the equilibrium measure, when does exist, is given by $m=h\d\nu$. A standard calculation shows $L^{*}_{\widetilde{\phi}}m=m$, concluding that $m$ is a $(\widetilde{\phi},1)$-conformal measure and therefore it is a $\widetilde{\phi}$-DLR measure.
\end{proof}

\begin{note}\label{note-equi-infi} When the potential $\phi$ is null recurrent $\mu=h\d\nu$ is an infinite invariant measure, with $h$ and $\nu$ as in Theorem \ref{teo-RPF}. The measure $\mu$ is the only conservative, ergodic and invariant measure that satisfies the relation $h_{\mu}(\sigma)=\mu\left(P_G(\phi)-\phi\right)$ for the class of weakly H\"{o}lder continuous potentials, see Theorem $2$ in \cite{Sa2}. This is the reason why O. Sarig proposed a notion of ``infinite equilibrium measure". It is worth noting that item $ii)$ of Theorem \ref{teo-dlrinv} is also valid for equilibrium measures and infinite \emph{DLR} measures, i.e., 
	$$
	\left\{\mu \in \mathcal{M}_{\sigma}(\Sigma_A):\mu~\mbox{is}~\phi\mbox{-}\mbox{equilibrium}\right\}\subseteq\big\{\mu \in \mathcal{M}_{\sigma}(\Sigma_A):\mu~\mbox{is}~\widetilde{\phi}\mbox{-}\emph{DLR}\big\}.
	$$
\end{note} 

%\begin{note}\label{Remark3}\textbf{\textsc{ Infinite Equilibrium Measures.}} Sarig proposed a notion of infinite equilibrium measure in \cite{Sa2}. We note that item $ii.)$ of the previous theorem is valid under more general conditions, see the Corollary \ref{coro-Cyr}.
%\end{note} 

% % % % % % % % % % % % % % % % % % % % % % % % % % % % % % % % % % % % % % % % % % % % % % % %
% % % % % % % % % % % % % % % % % % % % % % % % % % % % % % % % % % % % % % % % % % % % % % % %

\section{Volume-Type Phase Transitions for Renewal shifts}\label{sec:phase-transition}

It is usual in the literature of statistical mechanics to study models with a good behavior respect to phase transitions, that is, models such that there exist a parameter (temperature is an example) for which you have a unique point separating different situations. Even for general regular potentials and interactions, we have results like the \textit{Dobrushin uniqueness theorem} \cite{Do3, Geo, Sim, FV}, which one of the implications for finite state-space systems for high enough temperatures is to guarantee that does exist exactly one DLR state. This fact motivates the usual definition of \textit{phase transition} used by researchers in probability. In statistical mechanics, it is common to define that a model presents a \textit{phase transition} when, for low enough temperatures, there exists more than one DLR measure. So, in this case, the transition is from one to several DLR measures, ferromagnetic systems like Ising type models are examples of models for this situation.

In the setting of countable Markov shifts, the following result given by O. Sarig shows that the renewal shift has a good behavior respect the phase transition according to the recurrence modes. Thanks to the generalized Perron-Frobenius Theorem \ref{teo-RPF}, we know that the next theorem is equivalent to say that in high temperatures there exist an  equilibrium measure for the potential and, after a critical beta $\beta_c$, we have the absence of equilibrium probability measures and the pressure is a linear function. When the pressure is a linear function at low temperatures, this fact is called \textit{freezing phase transition}.

\begin{teo}[\cite{Sa3}]\label{SarTF}
Let $\Sigma_A$ be the renewal shift and let $\phi:\Sigma_A\to \mathbb{R}$ be a weakly H\"{o}lder continuous function such that $\sup \phi<\infty$. Then there exists $0<\beta_c\le \infty$ such that:
\begin{enumerate}
\item[i)] For $\beta\in(0,\beta_c)$, the potential $\beta \phi$ is positive recurrent, and for $\beta>\beta_c$, $\phi$ is transient.
\item[ii)] $P_G(\beta\phi)$ is real analytic in $(0,\beta_c)$ and linear in $(\beta_c,\infty)$. Moreover, $P_G(\beta\phi)$ is continuous

 \hspace{-0.55cm}  at $\beta_c$ but not analytic.
\end{enumerate} 
\end{teo}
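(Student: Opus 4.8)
The plan is to reduce everything to a generating-function analysis carried by the renewal (first-return) structure of the shift. The basic observation is that in the renewal shift there is, for each $n\ge 1$, exactly one first-return loop at the state $1$ of length $n$, namely the periodic orbit $p_n$ through $(1,n,n-1,\dots,2)$; hence $Z_n^*(\beta\phi,1)=e^{\beta\phi_n(p_n)}$. Since $\sup\phi<\infty$, a periodic point through $[1]$ of period $n$ is a concatenation of such loops whose lengths form a composition of $n$, so $Z_n(\beta\phi,1)\le 2^{\,n-1}e^{\beta n\sup\phi}$; thus $P_G(\beta\phi)<\infty$ for every $\beta>0$, the recurrence modes of Definition~\ref{modes-recu} are available, and $e^{-P_G(\beta\phi)}$ is the radius of convergence of $\sum_{n\ge 0}Z_n(\beta\phi,1)z^n$. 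The standard renewal-theoretic comparison between $Z_n$ and $Z_n^*$ (which uses this first-return decomposition and the summability of $\Var_n\phi$ coming from weak Hölder continuity) identifies that radius of convergence with that of
\[
\frac{1}{1-\Phi_\beta(z)},\qquad \Phi_\beta(z):=\sum_{n\ge 1}Z_n^*(\beta\phi,1)\,z^n=\sum_{n\ge 1}e^{\beta\phi_n(p_n)}z^n ,
\]
and shows that $\beta\phi$ is recurrent precisely when $\sum_{n\ge 1}Z_n^*(\beta\phi,1)e^{-nP_G(\beta\phi)}\ge 1$, positively recurrent if in addition $\sum_{n\ge 1}nZ_n^*(\beta\phi,1)e^{-nP_G(\beta\phi)}<\infty$, and transient otherwise.

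So I would set $c:=\limsup_n\tfrac1n\phi_n(p_n)\le\sup\phi$, so that by Cauchy--Hadamard $\Phi_\beta$ has radius of convergence $R_\beta=e^{-\beta c}$, and everything is governed by the single quantity
\[
g(\beta):=\Phi_\beta(R_\beta^-)=\sum_{n\ge 1}e^{\beta(\phi_n(p_n)-cn)}\in(0,+\infty].
\]
Because $\Phi_\beta$ is strictly increasing on $[0,R_\beta)$ with $\Phi_\beta(0)=0$: if $g(\beta)>1$ there is a unique $z_\beta\in(0,R_\beta)$ with $\Phi_\beta(z_\beta)=1$, whence $e^{-P_G(\beta\phi)}=z_\beta$, $\sum_n Z_n(\beta\phi,1)z_\beta^n=\infty$ (a pole of $(1-\Phi_\beta)^{-1}$), and $\Phi_\beta'(z_\beta)<\infty$ since $z_\beta$ is interior to the disc of convergence --- so $\beta\phi$ is positive recurrent and $P_G(\beta\phi)=-\log z_\beta>\beta c$; if $g(\beta)<1$ then $e^{-P_G(\beta\phi)}=R_\beta$, i.e. $P_G(\beta\phi)=\beta c$, and $\sum_n Z_n(\beta\phi,1)R_\beta^n=(1-g(\beta))^{-1}<\infty$ --- so $\beta\phi$ is transient. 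One checks $g(0^+)=+\infty$ (Fatou, each summand $\to 1$), and that $\{g\le 1\}=\{\beta:P_G(\beta\phi)=\beta c\}$ is the zero set of the nonnegative convex function $\beta\mapsto P_G(\beta\phi)-\beta c$, hence an interval; assuming that transience, once it occurs, persists for all larger $\beta$ --- so that $\{\beta:\beta\phi\text{ transient}\}=(\beta_c,\infty)$ --- item~(i) follows with $\beta_c:=\sup\{\beta>0:g(\beta)>1\}\in(0,\infty]$.

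For item~(ii): on $(0,\beta_c)$ the coefficients $e^{\beta\phi_n(p_n)}$ of $\Phi_\beta$ are entire in $\beta$ and the series $\Phi_\beta(z)$ converges absolutely and locally uniformly in $(\beta,z)$ near $\{(\beta,z_\beta):0<\beta<\beta_c\}$ (there $|z_\beta|<R_\beta=e^{-\beta c}$), so $(\beta,z)\mapsto\Phi_\beta(z)$ is jointly holomorphic there, and $\partial_z\Phi_\beta(z_\beta)>0$; the implicit function theorem applied to $\Phi_\beta(z)=1$ makes $\beta\mapsto z_\beta$, hence $P_G(\beta\phi)=-\log z_\beta$, real analytic on $(0,\beta_c)$. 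On $(\beta_c,\infty)$ the computation above gives $P_G(\beta\phi)=-\log R_\beta=\beta c$, which is linear. The map $\beta\mapsto P_G(\beta\phi)$ is convex, being by the variational expression (\ref{equi-meas}) (valid since $\sup\phi<\infty$) a supremum of affine functions of $\beta$; finite and convex on $(0,\infty)$ it is continuous there, in particular at $\beta_c$. And if it were analytic at $\beta_c$, then by the identity theorem its analytic continuation from the right, namely $\beta\mapsto\beta c$, would equal $P_G(\beta\phi)$ throughout $(0,\beta_c)$, contradicting $P_G(\beta\phi)=-\log z_\beta>\beta c$ there; hence $P_G(\beta\phi)$ is not analytic at $\beta_c$.

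The step I expect to be the main obstacle is the one flagged above: proving that $\{\beta:P_G(\beta\phi)=\beta c\}$ (the transient-or-critical set) is a half-line, i.e. that $\beta_0\phi$ transient forces $\beta\phi$ transient for every $\beta>\beta_0$. Convexity of $\beta\mapsto P_G(\beta\phi)-\beta c$ only makes this set an interval --- and for general countable Markov shifts it can genuinely fail (there are examples with infinitely many alternating recurrent and transient intervals) --- so here one must exploit the explicit geometry of the renewal loops together with the weak Hölder control on $\phi$ to pin down the sign and size of $\phi_n(p_n)-cn$ uniformly in $n$ (ideally $\phi_n(p_n)\le cn$ for all $n$, which would make each summand of $g(\beta)$, hence $g$ itself, non-increasing in $\beta$). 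A related technical point, needed to distinguish positive from null recurrence on $(0,\beta_c)$, is to sharpen the renewal comparison so that it transfers not only the radius of convergence but the precise leading singularity from $(1-\Phi_\beta)^{-1}$ to $\sum_n Z_n(\beta\phi,1)z^n$.
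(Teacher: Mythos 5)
First, a framing remark: the paper does not prove Theorem \ref{SarTF} at all --- it is imported verbatim from Sarig \cite{Sa3} --- so your sketch has to be measured against Sarig's argument, and there it has two genuine gaps. The first is the exact scalar renewal criterion everything else hangs on. The identity $Z_n^*(\beta\phi,1)=e^{\beta\phi_n(p_n)}$ is correct, but the decomposition of $Z_n(\beta\phi,1)$ over concatenations of first-return loops is multiplicative only up to factors $e^{\pm\beta kC}$, where $k$ is the number of loops and $C=\sum_{j\ge 2}\Var_j\phi$ (weak H\"older continuity controls only $\Var_j\phi$, $j\ge 2$, and $\phi$ is not a function of the loop it sits on). These corrections do not move the radius of convergence $R_\beta$, but they destroy the sharp dichotomy ``$g(\beta)>1$ recurrent / $g(\beta)<1$ transient'' and the identity $e^{-P_G(\beta\phi)}=z_\beta$ with $\Phi_\beta(z_\beta)=1$: in the regime $e^{-\beta C}\le \Phi_\beta(R_\beta^-)\le e^{\beta C}$ your argument decides nothing, and both your item (i) and your implicit-function-theorem proof of analyticity rest on that exact identity. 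Sarig's proof replaces the scalar power series by thermodynamic formalism for the induced system on $[1]$ (a full shift, hence BIP): the induced potential $\overline{\beta\phi}$, the quantity $p^*[\beta\phi]$ and the discriminant $\Delta[\beta\phi]$ play the role of your $g(\beta)-1$, and real analyticity of $\beta\mapsto P_G(\beta\phi)$ on the recurrent region comes from analytic perturbation of the induced Ruelle operator (spectral gap), not from an IFT applied to a generating function.

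The second and more serious gap is the one you flag yourself: you \emph{assume} that transience, once it occurs, persists for all larger $\beta$. That is the actual content of the theorem (uniqueness of the critical point); convexity of $\beta\mapsto P_G(\beta\phi)-\beta c$ only gives an interval, and Sarig's own examples in \cite{Sa3} show that for general countable Markov shifts recurrent and transient regimes can alternate infinitely often, so no soft argument can close this. The missing ingredient is not a uniform sign of $\phi_n(p_n)-cn$ (your ``ideally $\phi_n(p_n)\le cn$'' need not hold), but the variational principle: since $\sup\phi<\infty$, (\ref{equi-meas}) gives $P_G(\beta_2\phi)\le P_G(\beta_1\phi)+(\beta_2-\beta_1)\,m(\phi)$ for $\beta_2>\beta_1$, so $\beta\mapsto P_G(\beta\phi)-\beta m(\phi)$ is \emph{nonincreasing}, not merely convex, and its zero set is a half-line; this slope bound, combined with the Discriminant Theorem of \cite{Sa3} --- which converts the strict inequality $P_G(\beta\phi)>\beta\lim_n\tfrac1n\phi_n(x_n)$ into positive recurrence and its failure (with strictly negative discriminant) into transience, exactly the comparison this paper quotes in the proof of Theorem \ref{finitude} and the monotonicity exploited in Proposition \ref{P} --- is what produces the half-line $(\beta_c,\infty)$ of transience. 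Once that lemma is in place, your remaining steps (continuity at $\beta_c$ by convexity and finiteness, non-analyticity at $\beta_c<\infty$ by the identity theorem) are fine.
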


From now on $\beta_c$ will denote the critical value given by the Theorem \ref{SarTF}. Note that  for every $\beta\in(0,\beta_c)$ there exists a $(\beta\phi,e^{P_G(\beta\phi)})$-conformal measure. The eigenmeasure associated to the potential $\beta \phi$ will be denoted by $\nu_{\beta}$.

Let us define the function 
\begin{equation*}
m(\phi):=\sup_{\mu\in\mathcal{M}^{1}_{\sigma}(\Sigma_A)}\int\phi\d\mu.
\end{equation*}
A measure $\mu_{\phi}\in\mathcal{M}_{\sigma}^1(\Sigma_{A})$ is called \emph{$\phi$-maximizing} if $m(\phi)=\int\phi\d\mu_{\phi}$.

Under the conditions of Theorem  \ref{SarTF}, for each $\beta\geq 0$,
$$P_G(\beta\phi)\leq\log2+{\beta\sup\phi},$$
consequently $P_G(\beta\phi)<\infty$. Let us define the function $\psi:(0,\beta_c)\to\mathbb{R}$ given by
\begin{equation*}
\psi(\beta):=\frac{P_G(\beta\phi)}{\beta}.
\end{equation*}

\begin{prop}\label{P} Let $\Sigma_A$ be the renewal shift and let $\phi:\Sigma_A\to \mathbb{R}$ be a weakly H\"{o}lder continuous function such that $\sup \phi<\infty$. Then the function $\psi$ is continuous, strictly decreasing, and $\lim_{\beta\to0^{+}}\psi(\beta)=+\infty$.
\end{prop}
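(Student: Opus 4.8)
The plan is to work directly from the variational expression for the Gurevich pressure. Since $\sup\phi<\infty$ and $P_G(\beta\phi)<\infty$ for every $\beta\ge 0$, formula \eqref{equi-meas} gives
\begin{equation*}
P_G(\beta\phi)=\sup\Big\{h_\mu(\sigma)+\beta\int\phi\,\d\mu:\ \mu\in\mathcal M^1_\sigma(\Sigma_A),\ -\int\phi\,\d\mu<\infty\Big\},
\end{equation*}
so that $\psi(\beta)=P_G(\beta\phi)/\beta=\sup_\mu\big(\tfrac1\beta h_\mu(\sigma)+\int\phi\,\d\mu\big)$. First I would observe that for each fixed admissible $\mu$ the function $\beta\mapsto \tfrac1\beta h_\mu(\sigma)+\int\phi\,\d\mu$ is non-increasing in $\beta$ (strictly decreasing when $h_\mu(\sigma)>0$), because $h_\mu(\sigma)\ge 0$ always. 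Taking the supremum over $\mu$ of a family of non-increasing functions yields a non-increasing function, so $\psi$ is non-increasing; continuity of $\psi$ on $(0,\beta_c)$ is immediate from Theorem \ref{SarTF}(ii), which says $P_G(\beta\phi)$ is real analytic there, hence continuous, and $\beta\mapsto 1/\beta$ is continuous and nonzero on $(0,\beta_c)$.

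To upgrade monotonicity to \emph{strict} monotonicity, I would argue by contradiction: if $\psi(\beta_1)=\psi(\beta_2)$ for some $0<\beta_1<\beta_2<\beta_c$, then by convexity of $\beta\mapsto P_G(\beta\phi)$ (the pressure is always convex in the potential) together with $P_G(0\cdot\phi)=P_G(0)=\log 2>0$, one can compare slopes of secant lines through the origin and derive that $P_G$ must be affine on $[\beta_1,\beta_2]$ with the line passing through the origin, i.e. $P_G(\beta\phi)=c\beta$ there for a constant $c$. This contradicts the real-analyticity-but-not-linearity behaviour on $(0,\beta_c)$ — more precisely, an analytic function that is affine on a subinterval is affine everywhere, forcing $P_G(\beta\phi)=c\beta$ on all of $(0,\beta_c)$ and, by continuity, $P_G(0)=0$, contradicting $P_G(0)=\log 2$. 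Alternatively, and perhaps more cleanly, one notes $\psi(\beta)-\psi(\beta')=\frac{\beta' P_G(\beta\phi)-\beta P_G(\beta'\phi)}{\beta\beta'}$ and uses strict convexity plus $P_G(0)>0$ to show the numerator has a definite sign; I would pick whichever of these two routes is shortest once the details are checked.

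For the limit $\lim_{\beta\to 0^+}\psi(\beta)=+\infty$, the idea is that as $\beta\to 0^+$ the entropy term $\tfrac1\beta h_\mu(\sigma)$ blows up along any measure of positive entropy. Concretely, the renewal shift is topologically mixing with infinite Gurevich entropy (it contains full shifts on arbitrarily many symbols — e.g. the loops $1\to k\to k-1\to\cdots\to 2\to 1$ of all lengths — so one can find $\sigma$-invariant measures $\mu$ supported on finite subsystems with $h_\mu(\sigma)$ arbitrarily large and $\int\phi\,\d\mu$ bounded below, since such $\mu$ have compact support and $\phi$ is continuous, bounded above, and bounded below on that compact support). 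Fixing such a $\mu$ with $h_\mu(\sigma)\ge N$ gives $\psi(\beta)\ge \tfrac{N}{\beta}+\int\phi\,\d\mu\to+\infty$; letting $N\to\infty$ finishes it.

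The main obstacle I anticipate is the strict monotonicity step: making the convexity/analyticity comparison airtight requires being careful about the endpoint behaviour at $0$ (where $P_G(0)=\log 2\neq 0$, which is exactly what makes the secant-through-the-origin slope strictly decrease) and about the possibility $h_\mu(\sigma)=\infty$ in the variational principle. Everything else — continuity from Theorem \ref{SarTF}, and the $+\infty$ limit from the infinite-entropy structure of the renewal shift — is routine.
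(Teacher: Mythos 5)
Your route is genuinely different from the paper's, and in outline it works. The paper introduces $\varphi(\beta):=P_G(\beta\phi)-\beta m(\phi)$, where $m(\phi)$ is the maximizing value of $\phi$, proves that $\varphi$ is positive and non-increasing (positivity rests on Iommi's ergodic optimization theorem, uniqueness of equilibrium measures, and a cohomology argument via Theorem 4.8 of Sarig's notes), and then gets strict decrease of $\psi=\varphi/\beta+m(\phi)$; the divergence at $0^+$ is extracted from convexity of the pressure. Your variational-principle argument for (non-strict) monotonicity is simpler, and your strictness step --- $\psi(\beta_1)=\psi(\beta_2)$ forces $\psi$ constant on $[\beta_1,\beta_2]$, hence $P_G(\beta\phi)=c\beta$ there, hence on all of $(0,\beta_c)$ by real analyticity (Theorem \ref{SarTF}), contradicting $\liminf_{\beta\to0^+}P_G(\beta\phi)>0$ --- avoids the maximizing-measure machinery entirely. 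Two cautions: the conclusion ``$\psi$ constant on $[\beta_1,\beta_2]$'' must come from the monotonicity you already proved, not from ``convexity plus $P_G(0)>0$'' alone (convexity with a positive value at $0$ does not make $g(\beta)/\beta$ decreasing, cf.\ $g(\beta)=1+\beta^2$); and your alternative route via ``strict convexity'' is not available, since strict convexity of $\beta\mapsto P_G(\beta\phi)$ is not known a priori --- ruling out linearity is precisely the point.

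There is, however, a genuine factual error in your limit argument: the renewal shift does \emph{not} have infinite Gurevich entropy and does not contain subsystems of unbounded entropy. It has exactly one first-return loop at the vertex $1$ of each length $n$, so $Z_n(0,1)=2^{n-1}$ and the Gurevich entropy equals $\log 2$; hence every invariant measure has $h_\mu(\sigma)\le\log 2$ and there is no $\mu$ with $h_\mu(\sigma)\ge N$ for $N>\log2$, so ``letting $N\to\infty$'' is impossible (this is also visible from the bound $P_G(\beta\phi)\le\log 2+\beta\sup\phi$ used in the paper). The repair is immediate and already contained in your first sentence: fix a single invariant measure of positive entropy and finite $\phi$-integral, e.g.\ the measure of maximal entropy of the compact subsystem on the vertices $\{1,2\}$, on which the continuous $\phi$ is bounded; then $\psi(\beta)\ge h_\mu(\sigma)/\beta+\int\phi\,\d\mu\to+\infty$ as $\beta\to 0^+$. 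The same lower bound $P_G(\beta\phi)\ge h_\mu(\sigma)+\beta\int\phi\,\d\mu$ is what you need to justify $\liminf_{\beta\to0^+}P_G(\beta\phi)\ge h_\mu(\sigma)>0$ in the strictness step; the phrase ``by continuity, $P_G(0)=0$'' is not self-contained, since analyticity on the open interval $(0,\beta_c)$ says nothing about the boundary behaviour at $\beta=0$.
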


\begin{proof}
By Theorem \ref{SarTF} item $ii)$, the function $\psi$ is continuous in $(0,\beta_c)$. For $\beta\in(0,\beta_c)$, let $\mu_{\beta}$ be the unique equilibrium measure associated to the potential $\beta\phi$. Define the function $\varphi:(0,\beta_c)\to \mathbb{R}$ by
 \begin{equation*} %\label{estrito}
 \varphi(\beta):=P_G(\beta\phi)-\beta m(\phi).
 \end{equation*}
%Consider $0<\beta_1<\beta<\beta_c$,
 %By Lemma $9$ in \cite{BMP}, the function $\varphi$ is non-increasing.
%\be\label{presão}
%P_G(\beta_1\phi)-\beta_1m(\phi)\geq P_G(\beta\phi)-\beta m(\phi)\geq0.
%\ee
%Next 
Note that $\varphi$ is positive for $\beta_c<\infty$ by Theorem $1.1$ in \cite{Io} and Theorem $2$ in \cite{Sa3}. Now let us consider  $\beta_c=\infty$. By Theorem $1.1$ in \cite{Io}, there exists a $\phi$-maximizing measure $\mu_{\phi}\in\mathcal{M}_{\sigma}^1(\Sigma_{A})$. In this case
 \begin{equation*} %\label{posi}
 \varphi\geq 0, \quad\mbox{for every }\beta>0,
 \end{equation*}
because $\varphi(\beta)\geq h_{\mu_{\phi}}\geq 0$.  Note that the function $\varphi$ is convex. Since $\frac{\d}{\d\beta} P(\beta \phi)=\int\phi\d\mu_{\beta}$ for $\beta>0$, by Proposition 2.6.13 in \cite{MaUr2},
 \begin{equation}\label{deri}
  \frac{\d}{\d\beta}\varphi(\beta)=\int\phi\d\mu_{\beta}-m(\phi)\le 0.
 \end{equation}
Thus, $\varphi$ is non-increasing.

 Suppose, by contradiction,
 % that $(\ref{estrito})$ is false, i.e., by (\ref{posi}), 
that there exists $\tilde{\beta}>0$ such that $P_G(\tilde{\beta} \phi)=\tilde{\beta}m(\phi)$. Then
  \begin{eqnarray}\label{cero}
  P_G(\beta \phi)={\beta}m(\phi), \quad\mbox{for every } \beta\in[\tilde{\beta},\infty).
  \end{eqnarray}
By $(\ref{deri})$,  every measure $\mu_{\beta}$ is $\phi$-maximizing for $\beta>\tilde{\beta}$. Thus $h_{\mu_{\phi}}(\sigma)=0$ and $h_{\mu_{\beta}}(\sigma)=0$ for every $\beta>\tilde{\beta}$. Note that, for every $\beta>\tilde{\beta}$, both measures $\mu_{\phi}$ and $\mu_{\beta}$ are equilibrium measures for the potential $\beta\phi$. Since the equilibrium measure is unique, we have that $\mu_{\phi}=\mu_{\beta}$ for every $\beta>\tilde{\beta}$. Note that this argument also conclude that there is only one maximizing measure.
  
  Consider $\beta_1,\beta_2\in(\tilde{\beta},+\infty)$. Since $\mu_{\beta_1\phi}=\mu_{\beta_2\phi}$, then $\beta_1\phi\sim\beta_2\phi+c$ for some $c\in\mathbb{R}$, see Theorem $4.8$ in \cite{Sa5}. Since $P_G(\beta_1\phi)=P_G(\beta_2\phi)+c$, by $(\ref{cero})$,
  \begin{equation}\label{cons1}
  c=(\beta_1-\beta_2)m(\phi).
  \end{equation}
  Consider $x=(1,1,1,\ldots)\in\Sigma_{A}$. There exists a function $\alpha$ such that $\beta_1\phi=\beta_2\phi+\alpha-\alpha\circ\sigma+c$. Thus,
  \begin{equation}\label{cons2}
  c=(\beta_1-\beta_2)\phi(x).
  \end{equation}
  By $(\ref{cons1})$ and $(\ref{cons2})$, 
  $$m(\phi)=\phi(x)=\int\phi\d\delta_{x},$$
  then $\delta_{x}$ is a maximizing measure, and therefore a equilibrium measure, a contradiction. 
  
Since $\varphi$ is positive and non-increasing, for every $\beta_1< \beta_2$, we have $\varphi(\beta_1)\ge \varphi(\beta_2)>0$, thus
\begin{equation*}
\psi(\beta_1)=\frac{\varphi(\beta_1)}{\beta_1}+m(\varphi)> \frac{\varphi(\beta_2)}{\beta_2}+m(\varphi) = \psi(\beta_2),
\end{equation*}
concluding that $\psi$ is decreasing.

%By $(\ref{presão})$ and the fact that $0<\beta_1<\beta$ we have $\psi(\beta_1)>\psi(\beta)$, therefore $\psi$ is decreasing.
 
It remains to show $\lim\limits_{\beta\to{0^+}}\psi(\beta)=+\infty$. Since $\psi$ is decreasing, we have $\psi'(\beta)<0$, which implies $P_G^{\prime}(\beta\phi)-\psi(\beta)<0$.
Let $\beta_0\in (0,\beta_c)$. Since $P_G(\beta\phi)$ is convex with respect to $\beta$,
%\be
%P_G(\beta_0\phi)+P_G^{\prime}(\beta_0\phi)\beta-P_G^{\prime}(\beta_0\phi)\beta_0\leq P_G(\beta\phi),
%\ee
%equivalently
\begin{equation*}
P_G^{\prime}(\beta_0\phi)+\frac{\beta_0}{\beta}\left(\psi(\beta_0)-P_G^{\prime}(\beta_0\phi)\right)\leq \psi(\beta).
\end{equation*}
Thus, the left hand side diverges to infinity when $\beta$ converges to $0^+$.
\end{proof}

\begin{teo}\label{finitude} Let $\Sigma_A$ be the renewal shift and let $\phi:\Sigma_A\to \mathbb{R}$ be a weakly H\"{o}lder continuous function such that $\sup \phi<\infty$. Then, there exists $\tilde{\beta}_c\in(0,\beta_c]$ such that the eigenmeasure $\nu_{\beta}$ is finite for $\beta\in(0,\tilde{\beta}_c)$, and $\nu_{\beta}$ is infinite for $\beta\in(\tilde{\beta_c},\beta_c)$. Moreover, if $\Var_1\phi<\infty$ then $\nu_{\beta}$ is finite for all $\beta\in(0,\beta_c)$.
\end{teo}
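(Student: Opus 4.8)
The plan is to obtain an explicit formula for the eigenmeasure $\nu_\beta$ of the renewal shift evaluated on the cylinders $[n]$, $n\ge 1$, and then decide finiteness by summing a geometric-type series whose ratio is governed by the function $\psi(\beta)=P_G(\beta\phi)/\beta$ from Proposition \ref{P}. Concretely, I would first normalize: replacing $\phi$ by $\phi-\psi(\beta)$ (which shifts the pressure so that $P_G(\beta(\phi-\psi(\beta)))=0$) we may as well assume $\lambda=e^{P_G(\beta\phi)}$ and use the conformal relation $L^*_{\beta\phi}\nu_\beta=\lambda\nu_\beta$. In the renewal graph the only preimage of a point in $[n-1]$ under $\sigma$ that lies in $[n]$ is $n\,x$ for $x\in[n-1]$, and the only preimage that lies in $[1]$ is $1\,x$ for $x\in[1]$ (coming from the loop at the state $1$); testing the duality equation $\int L_{\beta\phi}f\,\d\nu_\beta=\lambda\int f\,\d\nu_\beta$ against $f=\mathbbm{1}_{[n]}$ as in Example \ref{exem1} yields a recursion of the form $\nu_\beta([n+1]) = \lambda^{-1}\int_{[n]} e^{\beta\phi(n+1\,x)}\,\d\nu_\beta(x)$, so that $\nu_\beta(\Sigma_A)=\sum_{n\ge 1}\nu_\beta([n])$ is finite if and only if these masses are summable.

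Next I would control the ratio $\nu_\beta([n+1])/\nu_\beta([n])$. The weak H\"older hypothesis gives uniform bounds $e^{-H_\phi\theta^{k}}\le e^{\beta(\phi_k(y)-\phi_k(z))}\le e^{H_\phi\theta^{k}}$ whenever $y,z$ agree on the first $k$ coordinates (here one must be a bit careful because $\Var_1\phi$ may be infinite, which is exactly why the last sentence of the theorem is a separate, easier claim — when $\Var_1\phi<\infty$ one additionally controls $k=1$). Iterating the recursion down to $[1]$, one sees $\nu_\beta([n])$ is comparable, up to a multiplicative constant bounded uniformly in $n$ by $\exp(H_\phi\sum_{k\ge 2}\theta^k)$, to $\lambda^{-(n-1)}\exp(\beta\,\phi_{n-1}(w^{(n)}))$ where $w^{(n)}=(n,n-1,\dots,1,1,1,\dots)$ is the canonical word descending to the fixed point of the loop. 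Taking $\frac1n\log$ and using that $\phi_{n-1}(w^{(n)})/n \to m_1(\phi):=\lim_n \frac1n\phi_n$ along this orbit (which exists by weak H\"older-ness), the exponential rate of $\nu_\beta([n])$ is $-P_G(\beta\phi)+\beta\, m_1(\phi)$, i.e. $\beta(m_1(\phi)-\psi(\beta))$ up to sign conventions. Hence $\nu_\beta$ is finite precisely when $\psi(\beta)>m_1(\phi)$ and infinite when $\psi(\beta)<m_1(\phi)$.

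Then the monotonicity structure does the rest. By Proposition \ref{P}, $\psi$ is continuous and strictly decreasing on $(0,\beta_c)$ with $\psi(0^+)=+\infty$; therefore the set $\{\beta\in(0,\beta_c):\psi(\beta)>m_1(\phi)\}$ is an interval $(0,\tilde\beta_c)$ with $\tilde\beta_c:=\sup\{\beta: \psi(\beta)>m_1(\phi)\}\in(0,\beta_c]$ (it equals $\beta_c$ if $\psi$ never drops to $m_1(\phi)$ on $(0,\beta_c)$, and otherwise it is the unique solution of $\psi(\tilde\beta_c)=m_1(\phi)$). On $(0,\tilde\beta_c)$ the masses $\nu_\beta([n])$ decay exponentially so $\nu_\beta$ is finite, and on $(\tilde\beta_c,\beta_c)$ they grow exponentially so $\nu_\beta(\Sigma_A)=\infty$; moreover uniqueness of the recurrent conformal measure (Theorem \ref{teo-RPF}) means ``the'' eigenmeasure is well defined up to scaling, so the dichotomy is genuinely a property of $\beta$. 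Finally, when $\Var_1\phi<\infty$ one has $m_1(\phi)\le \sup\phi<\infty$ and in fact the sharper comparison shows $\psi(\beta)>m_1(\phi)$ for \emph{all} $\beta\in(0,\beta_c)$: intuitively, with bounded first variation the entropy term $h_{\mu_\beta}(\sigma)>0$ keeps $P_G(\beta\phi)$ strictly above $\beta\,m_1(\phi)$, which is essentially the strict positivity of $\varphi$ established inside the proof of Proposition \ref{P}. I expect the main obstacle to be the bookkeeping around $\Var_1\phi=\infty$: the telescoping estimate for $\nu_\beta([n])$ must avoid ever comparing values of $\phi$ on points differing in the zeroth coordinate, so one should set up the recursion so that each step only invokes $\Var_{k}\phi$ for $k\ge 2$, and separately argue that the leftover $n=1$ contribution is a single finite constant; identifying $\tilde\beta_c$ with the explicit expression claimed in the introduction (the solution of $\psi(\tilde\beta_c)=m_1(\phi)$) is then a matter of matching definitions.
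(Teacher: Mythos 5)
Your proposal follows essentially the same route as the paper: test $L^{*}_{\beta\phi}\nu_{\beta}=e^{P_G(\beta\phi)}\nu_{\beta}$ against $\mathbbm{1}_{[a]}$ to get the recursion $e^{P_G(\beta\phi)}\nu_{\beta}([a])=\int_{[a-1]}e^{\beta\phi(ax)}\d\nu_{\beta}$, control each step by $\Var_a\phi$ with $a\ge 2$ only (so $\Var_1\phi=\infty$ never enters), compare the resulting exponential behaviour of $\nu_{\beta}([n])$ with $\psi(\beta)=P_G(\beta\phi)/\beta$, and use Proposition \ref{P} to produce a unique positive threshold $\tilde\beta_c$. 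Your reference points $w^{(n)}=(n,n-1,\ldots,1,1,\ldots)$ differ from the paper's periodic points $\gamma_j^n$ only by a correction bounded by $\sum_{j\ge2}\Var_j\phi<\infty$, so that choice is immaterial.

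Two caveats. First, your claim that $m_1(\phi)=\lim_n\frac1n\phi_{n-1}(w^{(n)})$ exists ``by weak H\"older-ness'' is false: $\frac1n\phi_{n-1}(w^{(n)})=\frac1n\sum_{j=2}^{n}\phi(j,j-1,\ldots,1,1,\ldots)$ is a Ces\`aro average of values of $\phi$ on distinct cylinders $[j]$, which weak H\"older continuity does not relate to one another; a potential depending only on $x_0$, taking values $\pm1$ in long blocks, has $\Var_n\phi=0$ for $n\ge2$, $\sup\phi<\infty$, and non-convergent averages. The paper works with the $\limsup$ throughout, and your argument survives verbatim with $m_1(\phi)$ redefined as a $\limsup$: convergence of $\sum_n\nu_{\beta}([n])$ when $\psi(\beta)>\limsup$ uses the upper estimate along the whole sequence, and divergence when $\psi(\beta)<\limsup$ only needs the lower estimate along a subsequence. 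Second, your endgame for $\Var_1\phi<\infty$ (``the entropy term keeps $P_G(\beta\phi)$ above $\beta m_1(\phi)$'') is not an argument as written, but it can be made precise along your lines and then genuinely differs from the paper: when $\Var_1\phi<\infty$ the relevant $\limsup$ equals $\limsup_n\frac1n\phi_n(x_n)$ with $x_n=(\overline{1,n,n-1,\ldots,2})$, each $\frac1n\phi_n(x_n)$ is the integral of $\phi$ against a periodic-orbit measure, hence at most $m(\phi)$, and the strict inequality $P_G(\beta\phi)>\beta m(\phi)$ on $(0,\beta_c)$ established inside the proof of Proposition \ref{P} concludes; the paper instead invokes Sarig's Discriminant Theorem to get $\limsup_n\frac1n\phi_n(x_n)<\psi(\beta)$ directly.
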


\begin{proof} 
 For each $\beta\in(0,\beta_c)$, the potential $\beta \phi$ is positively recurrent. Thus, by Theorem \ref{teo-RPF}, there exists a $\sigma$-finite measure $\nu_{\beta}$ such that
\begin{equation}\label{FormulaRe}
e^{P_G(\beta\phi)}\int f\d\nu_{\beta}=\int L_{\beta\phi}f\d\nu_{\beta},\quad\mbox{ for every }f\in L^1\left(\nu_{\beta}\right).
\end{equation}
Consider $\beta\in(0,\beta_c)$. For each $a\geq 2$, we consider $f:=\mathbbm{1}_{[a]}$. Thus, by Equation (\ref{FormulaRe}),
\begin{equation}\label{FormuRen}
e^{P_G(\beta\phi)}\nu_{\beta}([a])=\int_{[a-1]} e^{\beta\phi(ax)}\d\nu_{\beta}.
\end{equation}
For each $n\ge a$, consider the periodic orbit $$\gamma_a^n:=(\overline{a,a-1,\ldots,1,n,n-1,\ldots,a+1}).$$
Then
\begin{eqnarray}\label{ineqRe}
\phi(\gamma_a^n)-\Var_a\phi\leq\phi(ax)\leq\phi(\gamma_a^n)+\Var_a\phi,\quad\mbox{ for every }x\in\sigma[a].
\end{eqnarray}
Substituting (\ref{ineqRe}) in  Equation (\ref{FormuRen}),
\begin{eqnarray}\label{RenMes}
e^{-\beta\Var_a\phi+\beta\phi(\gamma_a^n)-P_G(\beta\phi)}\nu_{\beta}([a-1])\leq \nu_{\beta}([a])\leq e^{\beta \Var_a\phi+\beta\phi(\gamma_a^n)-P_G(\beta\phi)}\nu_{\beta}([a-1]).
\end{eqnarray}
Iterating (\ref{RenMes}) from $a=1$ to $n$,
$$\nu_{\beta}\left([n]\right)\leq e^{\beta\sum_{j=2}^n\left(\phi(\gamma_{j}^n)+\Var_{j}\phi\right)}e^{-(n-1)P_G(\beta\phi)}\nu_{\beta}\left([1]\right).$$
Therefore
\begin{eqnarray*}
\nu_{\beta}\left(\Sigma_{A}\right)\leq\nu_{\beta}([1])\left(\sum_{n\geq 1}e^{\beta\sum_{j=2}^n\Var_{j}\phi+\phi(\gamma_{j}^n)}e^{-(n-1)P_G(\beta\phi)}\right),
\end{eqnarray*}
concluding that $\nu_{\beta}$ is finite when the series
\begin{equation*} %\label{series}
\sum_{n\geq 1}e^{\beta\sum_{j=2}^n\Var_{j}\phi+\phi(\gamma_{j}^n)}e^{-(n-1)P_G(\beta\phi)}
\end{equation*}
converges, i.e.,
\begin{equation*}
\limsup_{n\to\infty}\frac{1}{n}\sum_{j=2}^n\phi\left(\gamma_{j}^n\right)<\frac{P_G(\beta\phi)}{\beta}.
\end{equation*}

Analogously, by $(\ref{RenMes})$, we have that $\nu_{\beta}$ is infinite when
\begin{equation*}
\limsup_{n\to\infty}\frac{1}{n}\sum_{j=2}^n\phi\left(\gamma_{j}^n\right)>\frac{P_G(\beta\phi)}{\beta}.
\end{equation*}

Let us define
\be\label{bet}
\tilde{\beta}_c:= \sup\left\{ \beta\in (0,\beta_c]: \limsup_{n\to\infty}\frac{1}{n}\sum_{j=2}^n\phi\left(\gamma_{j}^n\right)<\frac{P_G(\beta\phi)}{\beta} \right\}.
\ee
By Proposition \ref{P}, $\tilde{\beta}_c$ exists and it is positive.

Now let us consider $\Var_1\phi<\infty$ and, for every $n\geq 2$, we define $x_n:=(\overline{1,n,n-1\ldots,2})$ and $x_1=(1,1,\ldots,1,\ldots)$. Note that 
 \begin{equation*} %\label{V11}
 \frac{1}{n}\sum_{j=2}^n\phi\left(\gamma_{j}^n\right)=\frac{\phi_n(x_n)}{n}-\frac{\phi(x_n)}{n},
 \end{equation*}
 and using the fact that $\Var_1\phi<\infty$, we have
 \begin{equation*} %\label{V12}
 \lim_{n\to\infty}\frac{\phi(x_n)}{n}=0.
 \end{equation*}
 Therefore $\tilde{\beta}_c$ can be written as
 \be\label{V1c}
  \tilde{\beta_c}= \sup\left\{ \beta\in (0,\beta_c]: \limsup_{n\to\infty}\frac{\phi_n(x_n)}{n}<\frac{P_G(\beta\phi)}{\beta} \right\}.
  \ee

By the Discriminant Theorem, Theorem $2$ in \cite{Sa3}, we have
 $$\limsup_{n\to\infty}\frac{\phi_n(x_n)}{n}<\frac{P_G(\beta\phi)}{\beta}$$
 for every $\beta\in(0,\beta_c)$. Then, by $(\ref{V1c})$, we conclude that $\beta_c=\tilde{\beta_c}$, i.e., $\nu_{\beta}$ is finite for every $\beta\in(0,\beta_c)$.
\end{proof}

Note that in general the critical values $\beta_c$ and $\tilde{\beta_c}$ are different, but they can coincide. For instance, when the potential is a constant function, it is easy to see that $\tilde{\beta_c}={\beta_c}=+\infty$.

\begin{exem} Consider de potential  $\phi(x)=x_0-x_1$. We have $\tilde{\beta_c}=\log2$ and ${\beta_c}=+\infty$. 
\end{exem}

\begin{note}
Note that log 2 is precisely the topological entropy $h$ (which coincides with the Gurevich entropy of the graph) of the renewal shift. Thomsen proved the existence of $\beta$-KMS weights for $\beta > h$ on certain graph $C^{*}$-algebras in \cite{Tho1}. For the moment, this is just an example. We do not know if the critical point of the volume-type phase transition can be characterized in terms of some entropy or another thermodynamic quantity. 
\end{note}

%\textit{Remark 4}: 
Let $\{d_i\}_{i\geq1}$ be an increasing sequence of positive integers. We denote by $\calr_1$ the transition matrix $(A_{i,j})_{\bbn\times \bbn}$ with entries $A(1,1), A(i+1,i),A(1,d_i)$ equal to one for all $i\geq 1$, and the other entries equal to zero. Note that when $d_i=i$ for every $i\ge 1$, the $\Sigma_{\calr_1}$ is the renewal shift. Theorem \ref{SarTF} and Theorem \ref{finitude} hold for $\Sigma_{\calr_1}$ with the same $\tilde{\beta}_c$ given by (\ref{bet}).
%\bfblue{Adicionar mais informac\~{a}o sobre $\calr_1$ e colocar uma referencia relacionada a $\calr_1$.}

Consider $\Sigma_{\calr^{-}}$ be the topological Markov shift such that $\calr^{-}$ is the transition matrix $(A_{i,j})_{\bbn\times \bbn}$ with entries $A(i,i+1)$ and $A(i,1)$ equal to one for every $i\geq1$ and the other entries equal to zero, see Figure 3. Theorem \ref{SarTF} also holds for $\Sigma_{\calr^{-}}$, and the proof is analogous.
 
For $\Sigma_{\calr^{-}}$, Theorem \ref{teo-RPF} and Theorem \ref{SarTF} guarantee the existence of an eigenmeasure for $\beta\in (0,\beta_c)$. Moreover, since $\Sigma_{\mathcal{R}^-}$ is locally compact, by Theorem \ref{RPFt}, we guarantee the existence of an eigenmeasure for $\beta\geq\beta_c$. 

The next proposition give us conditions to the absence of volume-type phase transitions for potentials defined in  the shift $\Sigma_{\calr^{-}}$. So far we do not have a general theorem as in the case of the standard renewal shift.

\begin{prop}\label{prop:r-}
Let $\phi:\Sigma_{\calr^{-}}\to\bbr$ be a weakly H\"{o}lder continuous function such that $\sup\phi<\infty$, $\Var_2\phi=0$ and $\phi_{n}(\gamma_n)=0$ for every $n\geq1$, where $\gamma_n=(\overline{1,2,\ldots,n})$. Then,
\begin{itemize}\label{cond-rev}
\item[i)] If $\limsup_{n\to\infty}\frac{\phi(n,1)}{n}< 0$, \text{then} $\nu_{\beta}$ \text{is finite for every} $\beta>0$.
\item[ii)] If $\limsup_{n\to\infty}\frac{\phi(n,1)}{n}> 0$, \text{then} $\nu_{\beta}$ \text{is infinite for every} $\beta>0$.
\end{itemize}
\end{prop}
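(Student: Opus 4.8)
The plan is to compute $\nu_{\beta}$ explicitly on each cylinder $[a]$, $a\ge 1$, and then sum. Since $\Var_2\phi=0$, the potential factors through the first two coordinates, so $\phi(i,j)$ is well defined for every admissible pair $(i,j)$. The key observation about the graph of $\Sigma_{\calr^{-}}$ is that the only edge entering a state $j\ge 2$ issues from $j-1$, while every state maps onto $1$; hence for $x\in\Sigma_{\calr^{-}}$ the unique $\sigma$-preimage of $x$ whose first symbol equals $a$ exists precisely when $x_0\in\{1,a+1\}$, and then equals $(a,x_0,x_1,\dots)$. Consequently $L_{\beta\phi}\mathbbm{1}_{[a]}$ is supported on $[1]\cup[a+1]$ and bounded by $e^{\beta\sup\phi}$, so it lies in $L^1(\nu_{\beta})$ even though $L_{\beta\phi}$ need not be globally bounded in case (ii); evaluating the dual eigenrelation $L^{*}_{\beta\phi}\nu_{\beta}=e^{P}\nu_{\beta}$ (with $P:=P_G(\beta\phi)$) on $\mathbbm{1}_{[a]}$ yields, for every $a\ge 1$,
\begin{equation*}
e^{P}\,\nu_{\beta}([a])=e^{\beta\phi(a,1)}\,\nu_{\beta}([1])+e^{\beta\phi(a,a+1)}\,\nu_{\beta}([a+1]).
\end{equation*}

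Next I would pin down the two structural constants hidden in this recursion. An $n$-periodic point through $[1]$ is a concatenation of excursions $1\to 2\to\cdots\to m\to 1$, and the $\phi$-sum along such an excursion is exactly $\phi_m(\gamma_m)=0$; hence $\phi_n\equiv 0$ on all such points, so $Z_n(\beta\phi,1)$ equals the number of compositions of $n$, namely $2^{n-1}$, and therefore $P_G(\beta\phi)=\log 2$ for every $\beta>0$. (The same count shows $\beta\phi$ is positive recurrent, so $\nu_{\beta}$ exists and is finite on cylinders by Theorem \ref{teo-RPF}.) Subtracting the identities $\phi_m(\gamma_m)=0$ and $\phi_{m+1}(\gamma_{m+1})=0$ gives the cocycle relation $\phi(m,m+1)=\phi(m,1)-\phi(m+1,1)$ for all $m\ge 1$, together with $\phi(1,1)=\phi_1(\gamma_1)=0$.

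Now insert $e^{P}=2$ and $\phi(a,a+1)=\phi(a,1)-\phi(a+1,1)$ into the recursion and divide by $e^{\beta\phi(a,1)}$: with $g_a:=e^{-\beta\phi(a,1)}\nu_{\beta}([a])$ this becomes $g_{a+1}=2g_a-\nu_{\beta}([1])$, with $g_1=e^{-\beta\phi(1,1)}\nu_{\beta}([1])=\nu_{\beta}([1])$. Since $\nu_{\beta}([1])$ is the fixed point of $t\mapsto 2t-\nu_{\beta}([1])$, induction gives $g_a\equiv\nu_{\beta}([1])$, that is, $\nu_{\beta}([a])=e^{\beta\phi(a,1)}\nu_{\beta}([1])$ for all $a\ge 1$, and in particular $\nu_{\beta}([1])>0$ (otherwise $\nu_{\beta}\equiv 0$). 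As $\{[a]\}_{a\ge 1}$ partitions $\Sigma_{\calr^{-}}$,
\begin{equation*}
\nu_{\beta}\bigl(\Sigma_{\calr^{-}}\bigr)=\nu_{\beta}([1])\sum_{a\ge 1}e^{\beta\phi(a,1)}.
\end{equation*}
If $\limsup_{a}\phi(a,1)/a<0$ then $\phi(a,1)\le-\varepsilon a$ for all large $a$, the series is eventually dominated by a convergent geometric series, and $\nu_{\beta}$ is finite, proving (i); if $\limsup_{a}\phi(a,1)/a>0$ then $e^{\beta\phi(a_k,1)}\to\infty$ along a subsequence, the series diverges, and $\nu_{\beta}$ is infinite, proving (ii).

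The computation is short, and the delicate points are of a hygienic nature: justifying that the dual eigenequation may be evaluated on $\mathbbm{1}_{[a]}$ despite $L_{\beta\phi}$ being possibly unbounded in case (ii), checking that $\nu_{\beta}$ is finite and nonzero on cylinders, and running the excursion decomposition carefully enough to be sure $P_G(\beta\phi)=\log 2$. Everything else is the explicit solution of a first-order linear recursion, so the finite and infinite regimes both come out of the single closed formula $\nu_{\beta}([a])=e^{\beta\phi(a,1)}\nu_{\beta}([1])$.
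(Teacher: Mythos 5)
Your proof is correct and follows essentially the same route as the paper: evaluate the eigenmeasure equation $L^{*}_{\beta\phi}\nu_{\beta}=e^{P_G(\beta\phi)}\nu_{\beta}$ on cylinder indicators, obtain $\nu_{\beta}([n])=e^{\beta\phi(n,1)}\nu_{\beta}([1])$, and sum the resulting series. The only difference is that you supply the details the paper leaves implicit — the count $Z_n(\beta\phi,1)=2^{n-1}$ giving $P_G(\beta\phi)=\log 2$ and positive recurrence, and the derivation of the closed formula from the two-term recursion via the telescoping identity $\phi(m,m+1)=\phi(m,1)-\phi(m+1,1)$ coming from $\phi_n(\gamma_n)=0$ — all of which is consistent with the paper's one-line assertions.
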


\begin{proof}
For each $\beta> 0$, the Gurevich Pressure is equal to $P_G(\beta\phi)=\log2$, and the potential $\beta \phi$ is positively recurrent. Thus, by Theorem \ref{teo-RPF}, there exists a $\sigma$-finite measure $\nu_{\beta}$ such that
\begin{equation}\label{rever}
2\int f\d\nu_{\beta}=\int L_{\beta\phi}f\d\nu_{\beta},\quad\mbox{ for every }f\in L^1\left(\nu_{\beta}\right).
\end{equation}
By Equation $(\ref{rever})$, for each $n\geq 2$,
\begin{equation}\label{rev}
\nu_{\beta}([n])=e^{\beta\phi(n,1)}\nu_{\beta}([1]).
\end{equation}
Then, iterating (\ref{rev}),
\begin{equation*} 
\nu_{\beta}\left(\Sigma_{\mathcal{R
}^{-}}\right)=\nu_{\beta}([1])\sum_{n\geq 1}e^{\beta\phi(n,1)}.
\end{equation*}
Therefore $\nu_{\beta}$ is finite if $\limsup_{n\to\infty}\frac{\phi(n,1)}{n}<0$ and infinite if $\limsup_{n\to\infty}\frac{\phi(n,1)}{n}>0$ for every $\beta>0$.
\end{proof}

\begin{exem} Consider $\phi(x)=x_1-x_0$. Then, by the item i) of the Proposition \ref{prop:r-}, $\nu_{\beta}(\Sigma_{A})$ is finite for every $\beta>0$.
\end{exem}

\begin{exem}\label{exem6} Take $\phi(x)\equiv c$ with $c\in\bbr$. From Equation (\ref{rev}), it is easy to see that $\nu_{\beta}(\Sigma_{A})$ is infinite for every $\beta>0$. Note that, in this case, $\limsup_{n\to\infty}\frac{\phi(n,1)}{n}= 0$ and then the previous proposition is not sharp.
\end{exem}

\begin{note} It is important to mention that, given a potential $\phi$, even the very basic question if for a fixed $\beta >0$ all the $\beta \phi$-DLR measures give the same volume to the space $\Sigma_{A}$ is not obvious. In the previous example, for the constant potential, for each $\beta >0$, we have an infinite $\beta \phi$-DLR measure and also the probability DLR measure $\delta_{\overline{x}}$ of the Example \ref{exem4}.

\end{note}

In the next example, we present a potential that does not satisfy the conditions of the previous proposition which presents a volume-type phase transition.

\begin{exem}\label{exem7} Consider $\phi:\Sigma_{\calr^{-}}\to\bbr$ given by $\phi(x)=\log\frac{x_1}{x_0}$. Note that $\limsup_{n\to\infty}\frac{\phi(n,1)}{n}=0$ and $\nu_{\beta}$ is infinite for $\beta\leq 1$ and finite for $\beta>1$.
\end{exem}

\section{Concluding remarks}\label{sec:concluding}

We started the study of infinite DLR measures on countable Markov shifts. We explored the connection with conformal measures and the thermodynamic formalism for unidimensional systems with infinitely but countable states, a setting where the machinery of the Ruelle's operator can be applied. On the other hand, it seems that there are no results about infinite DLR measures on multidimensional subshifts from $\mathbb{N}^{\mathbb{Z}^d},$ for $d \geq 2$. Maybe a good direction to explore and go beyond the setting where the Ruelle operator is the main tool.

Another natural question is about the shifts and potentials with a well-behavior of the phase transition with respect to the volume, that is, a unique critical point that separates finite and infinite DLR measures. We proved the uniqueness of the critical point $\tilde{\beta_c}$ for the standard renewal shift, but we do not know about general results for other shifts even in the class of the renewal type shifts, see our last examples. Concrete examples with infinitely many critical points respect to the volume-type transition are not known.

Finally, the analogous objects to the DLR states for quantum models are the KMS states. In \cite{Bras}, Brascamp proved for a special class of interactions (a family of local functions that generates the potential) called \textit{classical}, that the KMS equations reduce to the DLR equations for such potentials. After this, Araki and Ion \cite{Ara} defined a new condition for equilibrium, now called \textit{Gibbs-Araki condition}, showed that when the interaction is classical, the Gibbs-Araki condition reduces to the DLR equations showing the equivalence to the KMS condition for these interactions.   

The topic of \textit{KMS weights} (the analogous notion to the infinite DLR measure in the quantum setting) has been developed on the context of $C^*$-algebras, in particular for groupoid $C^*$-algebras, see \cite{Chris, Tho1, Tho2}. Paradoxically, we are not aware of a systematic study of infinite DLR measures unless the progress made in infinite ergodic theory since Aaronson, Sarig, and others \cite{Aar, Sa5}. There is also some literature in the physics community as \cite{AKB, LeiBar}. In some cases, phase transitions (in the sense of the number of equilibrium states) in the classical setting imply phase transitions in the quantum framework, see \cite{Tho3}. So it is natural to investigate if this volume-type transition also forces a transition from KMS states to KMS weights; a reference for DLR measures on groupoids is \cite{BEFR}.

%O teorema acima nos permite definir a esperança condicional no caso de medidas $\sigma-$finitas, note que o preço a pagar é que a medida tem que ser $\sigma-$finita na sub-$\sigma$-álgebra restrita.

\section*{Acknowledgements}

The authors thank Ricardo Freire for his assistance for the proof of Proposition \ref{P}. We also thank Xuan Zhang and Van Cyr for comments about possible implications of the finiteness of the first variation, which helped us. We thank Lucas Affonso for his friendship over the years, for pointing out a gap in an earlier version of this paper, and for many references and interesting discussions about mathematical physics. We also thank Aernout van Enter for all the discussions about mathematical physics over the years since we were Ph.D. students, for all his generosity to read this manuscript and share with us comments and references about the subject of this paper. RB is supported by CNPq Grant 312294/2018-2 and FAPESP Grant 16/25053-8. ER was supported by Coordena\c{c}\~ao de Aperfei\c{c}oamento de Pessoal de N\'ivel Superior - Brasil (CAPES) and CNPq.

\end{document}